\documentclass[final,1p,times]{elsarticle}

\usepackage{amssymb}
\usepackage{bm}
\usepackage{amsthm}
\usepackage{amscd}
\usepackage{amsmath}
\usepackage{amsfonts}
\usepackage{amssymb}
\usepackage{graphicx}
\usepackage{color}
\newtheorem{theorem}{Theorem}[section]

\newtheorem{lemma}[theorem]{Lemma}

\usepackage{mathrsfs}
\usepackage{titletoc}
\numberwithin{equation}{section}

\newcommand{\R}{\mathbb{R}} 
\newcommand{\N}{\mathbb{N}} 
\newcommand{\eps}{\epsilon}

\newcommand{\be}{\begin{equation}}
	
	\newcommand{\ee}{\end{equation}}
\newcommand{\bea}{\begin{eqnarray}}
	\newcommand{\eea}{\end{eqnarray}}
\newcommand{\bna}{\begin{eqnarray*}}
	\newcommand{\ena}{\end{eqnarray*}}

\renewcommand{\le}{\left}
\newcommand{\ri}{\right}

\journal{arXiv}

\begin{document}
	
	\begin{frontmatter}
		\title{Singular metrics of constant negative $Q$-curvature in 
			Euclidean spaces}
		 
		\author[br1]{Tobias König}
\ead{koenig@mathematik.uni-frankfurt.de}
	\address[br1]{Institut für Mathematik, 
Goethe-Universität Frankfurt, 
Robert-Mayer-Str. 10, 60629 Frankfurt am Main, Germany}	 
		\author[ruc1,ruc2]{Yamin Wang}
		\ead{yaminwang@ruc.edu.cn}
		\address[ruc1]{Department of Mathematics,
			Renmin University of China, Beijing 100872, China}
		\address[ruc2]{Dipartimento di Matematica Guido Castelnuovo, Università  di Roma, La Sapienza,  Roma 00185, Italy}
	
		\begin{abstract}
			We study singular metrics of constant negative $Q$-curvature in the Euclidean space $\mathbb{R}^n$ for every $n \geq 1$. Precisely, we consider solutions to the problem 
\[
	(-\Delta)^{n/2}u=-e^{nu}\quad \text{on}\quad\mathbb{R}^{n}\backslash \{0\},
\]
under a finite volume condition  $\Lambda:=\int_{\mathbb{R}^n}e^{nu}dx$.  We  classify all  singular solutions  of the above equation based on their behavior at  infinity and zero. As a consequence of this, when $n=1,2$, we show that there  is  actually no singular solution.  Then adapting a variational technique, we obtain that for any  $n\geq 3$ and  $\Lambda>0$, the equation  admits solutions with prescribed asymptotic behavior.  These solutions correspond to metrics of constant negative $Q$-curvature, which are either smooth or have a singularity at the origin of logarithmic or polynomial type.  The present paper  complements previous works on the case of  positive  $Q$-curvature,  and also sharpens previous results in the nonsingular negative $Q$-curvature case. 
		\end{abstract}
		
		\begin{keyword}
	negative $Q$-curvature, singular metrics,  Liouville equation, variational method. 
			
		\end{keyword}
		
	\end{frontmatter}
	
	\section{Introduction}
	In the last decades, there has been much analytic work on the study of prescribed $Q$-curvature equation,  which arises in conformal geometry, i.e.,
	\begin{equation}\label{ap1}
		(-\Delta)^{n/2}u=Ke^{nu}\quad \text{on}\quad\mathbb{R}^{n}, \quad n\geq 3
	\end{equation}	
for some function $K$. Geometrically, if a smooth function $u$ solves (\ref{ap1}), 	then the conformal metric $g_u:=e^{2u}|dx|^2$ has $Q$-curvature $K$, where $|dx|^2$ denotes the Euclidean metric on  $\mathbb{R}^n$ (see for example \cite{y1,cy,cy2,fe}). Most attention has been paid to investigating the problem when $K\geq 0$. Suppose $K=(n-1)!$, which equals the constant $Q$-curvature of the round sphere $\mathbb{S}^n$. Let $|\mathbb{S}^n|$ be the volume of unit sphere $\mathbb{S}^n$.  Assume further that $g_u$ has finite volume $\Lambda$.  It is well-known that the problem (\ref{ap1})  admits solutions 
	\begin{equation}\label{apb1}
	u_{\lambda,x_0}(x)=\ln \left(\frac{2\lambda}{1+\lambda^2|x-x_0|^2}\right)
\end{equation}
with $\Lambda=|\mathbb{S}^n|$,  where $\lambda>0$ and $x_0\in \mathbb{R}^n.$ The functions in the family (\ref{apb1}) are called standard solutions or spherical solutions. They can be obtained by the stereographic projection and the action of the M$\ddot{\text{o}}$bius group of conformal diffeomorphisms on $\mathbb{S}^n$. Let us first review some results related to (\ref{ap1}) with the above hypotheses in the literature. In the case $n=4$, Lin \cite{cs}  classified all solutions to (\ref{ap1})  and obtained the asymptotic behavior of $u$ at $\infty$. Then this was generalized by Martinazzi \cite{lma}  for every even $n\geq 4$. This is also an extension of  previous results in \cite{cy,r2,xx,r3}. Recently, Jin et al. \cite{TJ} derived the classification of solutions to (\ref{ap1}) when $n=3$, which was extended by Hyder \cite{AHY}  to all odd dimension $n\geq 3$.  These classifications have all been shown to be essentially optimal thanks to converse existence results for solutions of (\ref{ap1}). For these, we refer the reader to \cite{hmjde,A-C,lm,e2,hye,HH},  among others.  Particularly, 
 when  $n=1,2$,  there are also some pioneering works on this issue,  for example  \cite{chan,cl,pt,dm}. \vspace{0.2cm}

Let us now turn to the case of constant negative curvature $K$. We can assume with no loss of generality that $K\equiv -1$. In fact, suppose $u$ is a solution of (\ref{ap1}) with $K=-(n-1)!$. Then for any constant $c$, it is clear that $\tilde{u}:=u-c$ satisfies
$$	(-\Delta)^{n/2}\tilde{u}=-(n-1)! e^{nc}e^{n\tilde{u}}\quad \text{on}\quad\mathbb{R}^{n}.$$
That is to say, we can replace $-(n-1)!$ by any negative constant. The present paper is devoted to the study of solutions to the negative curvature problem for dimensions greater or equal to one. Consider 
	\begin{equation}\label{az1}
		(-\Delta)^{n/2}u=-e^{nu}\quad \text{on}\quad\mathbb{R}^{n}, \quad\quad \Lambda=
		\int_{\mathbb{R}^n}e^{nu}dx<\infty.
	\end{equation}	
For $n\geq 1$,  every solution to (\ref{az1})  is indeed smooth, as discussed by  \cite{dmc,AH, lma}. Contrary to (\ref{apb1}), there is no explicit entire solution to (\ref{az1}).  When $n\in \{1, 2\}$, by a simple application of maximum principle,  the problem (\ref{az1}) admits no solution.  Things are however different in higher dimensions.  For even $n\geq 4$,   Martinazzi \cite{lm2} obtained radially symmetric solutions to (\ref{az1}). Meanwhile,  he also described  the asymptotic behavior of all solutions $u$ of (\ref{az1}) at infinity.   Subsequent to that,  via a fixed-point method,  Hyder-Martinazzi  \cite{y4} obtained the  existence of solutions $u$  with prescribed asymptotic behavior.  Let $u$ be the solution to (\ref{az1}). Define 
\begin{equation}\label{aaz1}
	v(x):=-\frac{1}{\gamma_{n}}\int_{\mathbb{R}^{n}}\ln\left(\frac{1+|y|}{|x-y|}\right)
	e^{n u(y)}dy,
\end{equation}	
where $\gamma_{n}:=(n-1)!|\mathbb{S}^n|/2$ is  chosen such that
$(-\Delta)^{n/2}\ln\left(\frac{1}{|x|}\right)=\gamma_{n} \delta_0$
in the sense of distributions. We also write for simplicity $\Lambda_1:=(n-1)!|\mathbb{S}^n|.$  The above  conclusions can be  summarized as follows. \vspace{0.2cm}

\noindent \textbf{Theorem A }\,\cite{dmc, lm2}. \,\textit{For $n\in\{1,2\}$, there is no solution to (\ref{az1}). Let  $n\geq 4$ be even. Supposing that $u$ is a solution to (\ref{az1}), one has
	$$u(x)=v(x)+p(x),$$
where $p$  is a non-constant polynomial of even degree at most $n-2$. Moreover, there is
a closed set $Z\subset \mathbb{S}^{n-1}$ of Hausdorff dimension at most  $n-2$ such that for every compact subset $K\subset \mathbb{S}^{n-1}\backslash Z$,
	\begin{equation}\label{aac2x1}
			\lim_{t\rightarrow \infty}\frac{v(t\xi)}{\ln t}=\frac{\Lambda}{\gamma_n} \quad \quad \text{for}\quad \xi\in K.
	\end{equation}}

For the existence results, the following theorem shows that both the asymptotic behavior of $u$  and the constant $\Lambda$ can be  \vspace{0.2cm}  simultaneously prescribed.

\noindent \textbf{Theorem B }\,\cite{y4}. \,\textit{Let  $n\geq 4$ be even. Given a polynomial $p$ such that $\deg(p)\leq n-2$ and $x\cdot \nabla p(x)\rightarrow -\infty$ as $|x|\rightarrow \infty,$ for every $\Lambda>0$,  there exists a  solution $u\in C^{\infty}(\mathbb{R}^n)$ to (\ref{az1}) having the asymptotic behavior
	$$u(x)=\frac{\Lambda}{\gamma_n}\ln(|x|)+p(x)+o(\ln |x|) \quad \quad \text{as}\quad |x|\rightarrow \infty. $$}

\indent Since the problem (\ref{az1}) has been treated for even $n\geq 4$, we are particularly interested in the odd case.    As can be expected, in higher odd dimensions $n\geq 3$, the problem  (\ref{az1}) admits entire solutions, which was proven by  Hyder \cite{AH}. However,  there remains  a gap of  classification for these solutions.  That is exactly one of the purposes of current \vspace{0.2cm} work. 

In this paper, we focus on  a more general  problem compared to (\ref{az1}), namely, solutions to
\begin{equation}\label{az2}
	(-\Delta)^{n/2}u=-e^{nu}\quad \text{on}\quad\mathbb{R}^{n}\backslash \{0\}, \quad\quad \Lambda=
	\int_{\mathbb{R}^n}e^{nu}dx<\infty.
\end{equation}
It corresponds to a conformal metric with  $Q$-curvature equal to  negative one  everywhere except at the origin.  One may also give a further interpretation of (\ref{az2}) from the geometric point of view. Roughly speaking, when $n\geq 2$ and solution  $u$ behaves like $O(\ln|x|)$ near the origin, it will represent a conic constant $Q$-curvature metric on $\mathbb{S}^n$, due to the stereographic projection. If $u$ blows up faster at the origin, then zero should be viewed as an essential singularity of the metric. But in dimension $n=1$,   it has a different geometric \vspace{0.2cm} interpretation (see  \cite{dm}). 

To understand the definition of weak solutions to such equations, we give a brief introduction as follows. The intrigued reader can consult \cite{dmc,nezza,sl}  for much deeper explanations. Let $n\geq 1$, $\mathcal{S}(\mathbb{R}^n)$ be the Schwartz space of rapidly decreasing smooth functions and $\mathcal{S}'(\mathbb{R}^n)$ be its dual. Let $f\in \mathcal{S}'(\mathbb{R}^n)$ be a given tempered distribution. In the case $n$ is even,  if $u\in L^1_{loc}(\mathbb{R}^n)$ and for every $\psi\in C_c^{\infty}(\mathbb{R}^n\backslash \{0\})$
$$\int_{\mathbb{R}^n}u(-\Delta)^{n/2}\psi dx=\langle f, \psi \rangle,$$
then $u$ is a solution of 
\begin{equation}\label{xaz1}
	(-\Delta)^{n/2}u=f\quad \text{on}\quad\mathbb{R}^{n}\backslash \{0\}.
\end{equation}	
If  $n$ is odd, given  $\sigma>0$,  we consider the space
\begin{equation*}\label{xazf1}
	L_{\sigma}(\mathbb{R}^n):=\left\{u\in L^1_{loc}(\mathbb{R}^n):\; \int_{\mathbb{R}^n}\frac{|u(x)|}{1+|x|^{n+2\sigma}}dx<\infty\right\}.
\end{equation*}	
The norm in $L_{\sigma}$ is naturally given by
$$\|u\|_{L_{\sigma}}=\int_{\mathbb{R}^n}\frac{|u(x)|}{1+|x|^{n+2\sigma}}dx.$$
For $\psi\in \mathcal{S}(\mathbb{R}^n)$, denote $(-\Delta)^{n/2}\psi:=F^{-1}(|\xi|^nF\psi(\xi))$, where $F\psi(\xi)$ is the normalized Fourier transform given by
\begin{equation*}\label{xaz2}
	F\psi(\xi):=\frac{1}{(2\pi)^{\frac{n}{2}}}\int_{\mathbb{R}^n}\psi(x)e^{-ix \cdot \psi}dx.
\end{equation*}	
A weak solution to (\ref{xaz1}) can be defined in two possible ways. For one of the  definitions, write $(-\Delta)^{n/2}u:=(-\Delta)^{1/2}\circ (-\Delta)^{(n-1)/2}$ with the convention that $(-\Delta)^0$ is the identity. We say that $u$ is a solution of (\ref{xaz1}) if $\Delta^{(n-1)/2}u \in L_{\frac{1}{2}}(\mathbb{R}^n)$, $u\in W^{n-1,1}_{loc}(\mathbb{R}^n)$ and 
\begin{equation}\label{nnx1}
	\langle (-\Delta)^{(n-1)/2}u,\,(-\Delta)^{1/2}\psi \rangle:=\int_{\mathbb{R}^n}u(-\Delta)^{n/2}\psi dx=\langle f, \psi \rangle
\end{equation}	
for every $\psi\in \mathcal{S}(\mathbb{R}^n)$. The integral in (\ref{nnx1})  makes sense (see for example   \cite[Proposition 2.1]{AH}). 
For the other definition of weak solution to (\ref{xaz1}), we require $u\in L_{\frac{n}{2}}(\mathbb{R}^n)$ and 
\begin{equation*}\label{nnx2}
	\langle (-\Delta)^{n/2}u,\,\psi \rangle:=\int_{\mathbb{R}^n}u(-\Delta)^{n/2}\psi dx=\langle f, \psi \rangle
\end{equation*}	
for every $\psi\in \mathcal{S}(\mathbb{R}^n)$, where $\int_{\mathbb{R}^n}u(-\Delta)^{n/2}\psi dx$ is  well-defined.   In the regular case, the above two definitions are in fact equivalent, which was proven by \vspace{0.2cm} \cite{AH}.

Before stating our results, we review the by-now classical works in this direction. When the $Q$-curvature equals a positive constant, such singular problem has been studied  by  \cite{hmm,T-L,fma}.  Moreover, there are some papers devoted to similar equations on  Riemann surface, for instance \cite{ru2,jost1,ru1,mt}. The reader can also consult  \cite{xi1,xi2, xi3, xi4} for a nice survey of  other conformally invariant equations with isolated singularities.     This inspires us to explore singular solutions to (\ref{az2}).  As we shall see  in Lemma \ref{smooth 1}, every such solution is smooth away from the origin. If some conditions are imposed, it is at least H$\mathrm{\ddot{o}}$lder continuous  near the origin. We refer to \cite[Theorem 2.1]{hmm} for basic regularity results and similar  \vspace{0.2cm}  arguments.   

Our first result is the classification of all singular solutions to \eqref{az2} in terms of their asymptotic behavior at $0$ and $\infty$. 

\begin{theorem}\label{tr2}
	Let $n\geq 1$ and $u$ be a solution to (\ref{az2}).  Let $v(x)$ be defined as in (\ref{aaz1}). 
	Then there exist $\beta\in\mathbb{R}$ and polynomials $p$, $q$ of even degree at most $n-1$ bounded from above such that
	\begin{equation*}\label{ac2}
		u(x)=v(x)+p(x)+q\left(\frac{x}{|x|^2}\right)+\beta\ln(|x|),
	\end{equation*}
	where $v$ satisfies
	\begin{equation}\label{aac2x}
		\lim_{|x|\rightarrow \infty}\frac{v(x)}{\ln(|x|)}=\frac{\Lambda}{\gamma_n}\quad\quad \text{and}\quad\quad
		\lim_{|x|\rightarrow 0}\frac{v(x)}{\ln(|x|)}=0.
	\end{equation}
	Moreover, assume either (i) $\beta>-1$,  $p(x)\rightarrow-\infty$ as $ |x|\rightarrow\infty$, or (ii) $\beta\leq-1$,  $q(x)\rightarrow-\infty$ as $ |x|\rightarrow\infty.$ Then  for every multi-index  $k\in\mathbb{N}^n$ with $0<|k|\leq n-1$,
	\begin{equation*}\label{aakx3}
		\lim_{|x|\rightarrow\infty}D^k v(x)=0.
	\end{equation*}
\end{theorem}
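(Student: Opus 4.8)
The plan is to reduce the classification of singular solutions to the known nonsingular theory by carefully analyzing the Riesz potential $v$ and the ``harmonic remainder'' $u - v$. Since $u$ solves $(-\Delta)^{n/2}u = -e^{nu}$ on $\mathbb{R}^n \setminus \{0\}$ with $\Lambda = \int e^{nu} < \infty$, the distributional equation on all of $\mathbb{R}^n$ reads $(-\Delta)^{n/2}u = -e^{nu} + c\,\delta_0 + \sum_{|k|\le n-1} c_k D^k\delta_0$ for some constants, because any distribution supported at $\{0\}$ is a finite linear combination of derivatives of $\delta_0$, and the order is controlled by the regularity class ($u \in L^1_{loc}$, $\Delta^{(n-1)/2}u \in L_{1/2}$ or $u \in L_{n/2}$) together with the polynomial growth bounds forced by the finite-volume condition. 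By $(-\Delta)^{n/2}\ln(1/|x|) = \gamma_n\delta_0$ and the fact that $(-\Delta)^{n/2}$ of the Kelvin-type function $q(x/|x|^2)$ produces derivatives of $\delta_0$, I would first show $u = v + \beta\ln|x| + p(x) + q(x/|x|^2)$ where $p$ is entire polynomial and $q$ is a polynomial, by subtracting off these explicit pieces so that $w := u - v - \beta\ln|x| - q(x/|x|^2)$ is $n/2$-polyharmonic on all of $\mathbb{R}^n$; a Liouville-type theorem for polyharmonic functions with the relevant growth then forces $w = p$ to be a polynomial. Controlling the degrees (at most $n-1$) and the parity (even degree, bounded above) uses the growth constraint: $e^{nu} \in L^1$ forces $u \le C\ln|x| + C$ near infinity after removing the positive-exponent terms, exactly as in Martinazzi's arguments (Theorem A), whence $p$ and $q$ are bounded above and of even degree $\le n-1$.

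Next I would establish the two asymptotic identities for $v$ in \eqref{aac2x}. The limit at infinity, $\lim_{|x|\to\infty} v(x)/\ln|x| = \Lambda/\gamma_n$, follows by splitting the defining integral \eqref{aaz1} over $\{|y| \le |x|/2\}$, $\{|x-y| \le |x|/2\}$, and the remaining annular region; on the first region $\ln\frac{1+|y|}{|x-y|} \sim \ln|x|$, contributing $\frac{\Lambda}{\gamma_n}\ln|x|$, while the near-singularity region contributes $o(\ln|x|)$ because $e^{nu}$ is integrable and $\ln$ is locally integrable — this is the same computation underlying \eqref{aac2x1} in Theorem A, but now it holds at \emph{every} point, not just outside an exceptional set $Z$, because the singularity of $u$ is now located only at the origin (which has been extracted into $\beta\ln|x|$) and $u$ is smooth elsewhere, so $e^{nu}$ has no concentration. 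The limit at the origin, $\lim_{|x|\to 0} v(x)/\ln|x| = 0$, is proved by a parallel decomposition of the integral near $x = 0$: the logarithmic weight $\ln\frac{1+|y|}{|x-y|}$ stays bounded on $\{|y| \ge \delta\}$, and on $\{|y| < \delta\}$ one again uses integrability of $e^{nu}$ against the locally integrable kernel $\ln\frac{1}{|x-y|}$, together with the decay-at-zero of $u$ guaranteed by the Hölder regularity (or the structure $u = v + \ldots$), to get a contribution that is $o(\ln(1/|x|))$; here the key point is that $v$ itself is bounded near $0$ up to an $o(\ln|x|)$ error, which the convolution estimate delivers.

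Finally, under hypothesis (i) $\beta > -1$, $p(x) \to -\infty$, or (ii) $\beta \le -1$, $q(x)\to -\infty$ as $|x|\to\infty$, I would prove the derivative decay $\lim_{|x|\to\infty} D^k v(x) = 0$ for $0 < |k| \le n-1$. The mechanism is that the hypotheses guarantee $e^{nu}$ decays sufficiently fast at infinity: in case (i), near infinity $u \approx \frac{\Lambda}{\gamma_n}\ln|x| + p(x) + o(\ln|x|)$ with $p \to -\infty$ dominating, so $e^{nu}$ decays faster than any power; in case (ii), near infinity the term $q(x/|x|^2)$ is negligible but the hypothesis instead controls behavior so that, combined with $\beta \le -1$, one still gets enough decay of $e^{nu}$ at infinity (and the Kelvin inversion trades the roles of $0$ and $\infty$). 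Differentiating \eqref{aaz1} under the integral sign, $D^k v(x) = -\frac{1}{\gamma_n}\int D^k_x\!\left[\ln\frac{1+|y|}{|x-y|}\right] e^{nu(y)}\,dy$, and $D^k_x \ln\frac{1}{|x-y|}$ is homogeneous of degree $-|k|$ in $x-y$; splitting into $|y| \le |x|/2$ (where $|x-y|\gtrsim|x|$ so the kernel is $O(|x|^{-|k|}) \to 0$ times $\Lambda$) and $|y| > |x|/2$ (where the fast decay of $e^{nu}$ kills the contribution, the singular kernel $|x-y|^{-|k|}$ being locally integrable since $|k| \le n-1 < n$) gives the result. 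The main obstacle is the bookkeeping in case (ii): ensuring that $\beta \le -1$ together with $q \to -\infty$ genuinely forces integrable-with-margin decay of $e^{nu}$ at infinity, which requires tracking how the origin-singularity profile ($\beta\ln|x| + q(x/|x|^2)$, large and negative at $0$) interacts with the behavior at infinity after the growth of $v$; I would handle this by a Kelvin transform $x \mapsto x/|x|^2$ that swaps the two ends, reducing case (ii) to case (i) for the transformed solution.
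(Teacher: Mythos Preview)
Your proposal has two genuine gaps, both rooted in the sign reversal between positive and negative curvature.

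\textbf{Upper-boundedness of $p$ and $q$.} You assert this follows ``exactly as in Martinazzi's arguments (Theorem~A),'' but the paper explicitly notes that the conventional method fails here. In the positive-curvature setting the analogous potential admits an easy pointwise \emph{lower} bound, so if $p(x_k)\to+\infty$ along some sequence one immediately gets $\int e^{nu}=\infty$. In the negative-curvature case the easy bound is the \emph{upper} one, $v(x)\le\frac{\Lambda}{\gamma_n}\ln|x|$; the lower bound \eqref{f1} carries the uncontrolled near-field term $\int_{B_1(x)}\ln|x-y|\,e^{nu(y)}\,dy$, and $e^{nv}$ could in principle be tiny on the balls $B_{\rho_k}(x_k)$ where $p$ is large, so no contradiction results. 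The paper's remedy is an \emph{averaged} lower bound (Lemma~\ref{lemma v lower bound integral}),
\[
\rho^{-n-\varepsilon}\int_{B_\rho(x)}e^{qv}\,dz\ \ge\ c\,|x|^{\left(\frac{\Lambda}{\gamma_n}-\varepsilon\right)q},
\]
obtained via two nested applications of Jensen's inequality. This is the main new analytic input, and your sketch does not contain it.

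\textbf{The limit $v(x)/\ln|x|\to\Lambda/\gamma_n$.} Your splitting argument controls the near-field contribution $\int_{B_1(x)}\ln\frac{1}{|x-y|}\,e^{nu}$ only if $e^{nu}$ stays bounded at infinity. But once $p,q$ are known to be bounded above one has merely $e^{nu}\le C|y|^{n(\Lambda/\gamma_n+\beta)}$, which \emph{grows} polynomially when $\Lambda/\gamma_n+\beta>0$; the near-field term is then of polynomial size in $|x|$, not $o(\ln|x|)$. The paper treats this case with a Campanato--H\"older oscillation estimate $|v(x)-v(y)|\le o(1)\ln|x|$ on $B_1(x)$ (Lemma~\ref{lemma v hölder}), combined again with the averaged bound above to derive a contradiction from any sequence along which $v(x_k)/\ln|x_k|$ stays below $\Lambda/\gamma_n$. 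Your justification --- that the limit now holds at every point because ``the singularity of $u$ is located only at the origin\ldots so $e^{nu}$ has no concentration'' --- misreads the obstruction: in Theorem~A the solution $u$ is already smooth everywhere, and the exceptional set $Z$ arises not from singularities of $u$ but from directions along which the polynomial $p$ fails to decay, i.e.\ where $e^{nu}$ may \emph{grow} at infinity. That is exactly the difficulty you have not addressed.
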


Theorem \ref{tr2} improves some existing results in the literature in various respects. To start with, it complements the classification of singular solutions from \cite{T-L}, which holds for the case of positive constant curvature. As a byproduct,  Theorem \ref{tr2} also improves some of the above-mentioned results of  \cite{lm2} on regular solutions to the  problem \eqref{az1}  in at least two ways. Firstly, containing a special case ($q = 0$, $\beta = 0$), it covers the classification of regular solutions to \eqref{az1} in odd dimensions $n \geq 3$, which has been missing so far to our knowledge. Secondly, the upper-boundedness of $p$ is new even for the regular case. To see that in Theorem A, we shall give a clearer description about the set $Z$ which appears in \cite{lm2}. Write 
$$p(t\xi)=\sum_{i=0}^{d} a_i(\xi)t^i, \quad \quad d:=\deg(p)\leq n-2,$$
where $a_i$ is a homogeneous polynomial of degree $i$ for each $0\leq i\leq d$ or $a_i\equiv0$. Then the set $Z$ is given by
$$Z=\{\xi\in\mathbb{S}^{n-1}:\,a_d(\xi)=0\}. $$
One finds that the polynomial $p$ of Theorem A is not necessarily upper bounded. Take for instance $Z=\{x_1=0\}\cap \mathbb{S}^{n-1}$. Then Theorem A does not exclude the case of a non-upper bounded polynomial like $p(x)=-x_1^4+x_2^2$, while our Theorem \ref{tr2} does.
On the other hand, we note that our arguments do not yield the stronger assertion that $Z = \emptyset$. Indeed, this would mean that $p(x) \to -\infty$ as $|x| \to \infty$. Although we can exclude a lot of simple explicit examples like $p(x)= -x_1^4$ by arguing similarly  to \eqref{fa7},  we cannot in general rule out the case of $p$ being upper-bounded, but having non-empty \vspace{0.2cm} $Z$. 

As is already seen, our result (\ref{aac2x}) is stronger than (\ref{aac2x1}). It is worth mentioning that proving (\ref{aac2x1}) when $Z=\emptyset$ is relatively standard by now. Fortunately, we can deal with the general case in which $Z$ could be nonempty by using Campanato-space estimate  first introduced \vspace{0.2cm} in  \cite{hmm}.

Theorem \ref{tr2} states in particular that in dimensions $n=1,2$ the polynomials $p$ and $q$ must be constant. From this, it is not hard to deduce our next result about non-existence of singular solutions.

\begin{theorem}\label{th2x}
	For $n\in\{1,2\}$, there are no  solutions to (\ref{az2}).
\end{theorem}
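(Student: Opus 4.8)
The plan is to deduce Theorem \ref{th2x} directly from the classification in Theorem \ref{tr2}, using the finite volume condition $\Lambda = \int_{\mathbb{R}^n} e^{nu}\,dx < \infty$ together with the dichotomy on $\beta$. First I would invoke Theorem \ref{tr2}: for $n\in\{1,2\}$ the polynomials $p,q$ have even degree at most $n-1 \leq 1$, hence are \emph{constants}; absorbing them into $v$ and relabeling, a solution has the form $u(x) = v(x) + \beta\ln|x| + c$ for some $c\in\mathbb{R}$, with $v(x)/\ln|x| \to \Lambda/\gamma_n$ as $|x|\to\infty$ and $v(x)/\ln|x| \to 0$ as $|x|\to 0$.

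Next I would examine the integrability of $e^{nu}$ at infinity and at the origin. Since $\gamma_n = (n-1)!|\mathbb{S}^{n-1}|/2$, for $n=1$ we have $\gamma_1 = |\mathbb{S}^0|/2 = 1$ and for $n=2$, $\gamma_2 = |\mathbb{S}^1|/2 = \pi$. Near infinity, $nu(x) = nv(x) + n\beta\ln|x| + nc$ behaves like $\left(\tfrac{n\Lambda}{\gamma_n} + n\beta\right)\ln|x|$ to leading order, so $e^{nu}$ decays like $|x|^{n\Lambda/\gamma_n + n\beta}$; finiteness of $\int_{|x|>1} e^{nu}\,dx$ forces $n\Lambda/\gamma_n + n\beta < -n$, i.e. $\beta < -1 - \Lambda/\gamma_n < -1$. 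In particular $\Lambda > 0$ (if $\Lambda = 0$ then $e^{nu}\equiv 0$, impossible), so this is a strict inequality with room to spare. Similarly, near the origin $nu(x) \sim n\beta\ln|x|$ (the $v$ term is $o(\ln|x|)$), so $e^{nu}$ behaves like $|x|^{n\beta}$ near $0$; finiteness of $\int_{|x|<1} e^{nu}\,dx$ requires $n\beta > -n$, i.e. $\beta > -1$. This already contradicts $\beta < -1$, which finishes the proof; the two integrability constraints at the two ends of $\mathbb{R}^n$ are simply incompatible.

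The one point that needs care — and which I expect to be the only real obstacle — is making the asymptotic statements $nu(x) \sim (\text{leading log})$ precise enough to control the integrals. The limits in \eqref{aac2x} are only statements about $v(x)/\ln|x|$, so a priori $v(x) = \frac{\Lambda}{\gamma_n}\ln|x| + o(\ln|x|)$, and one must check that the $o(\ln|x|)$ error does not destroy the integrability conclusion. This is fine: for any $\epsilon > 0$, eventually $e^{nu(x)} \leq |x|^{n\Lambda/\gamma_n + n\beta + n\epsilon + 1}$ near infinity (absorbing $e^{nc}$), and choosing $\epsilon$ small enough that $n\Lambda/\gamma_n + n\beta + n\epsilon < -n$ — which is possible precisely because we need the \emph{strict} inequality $n\beta < -n - n\Lambda/\gamma_n$ — shows $\int_{|x|>R} e^{nu} < \infty \Rightarrow \beta < -1$ after letting $\epsilon \to 0$; an identical $\epsilon$-argument at the origin gives $\beta > -1$. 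One should also remark, for completeness, that we may absorb the constant $c$ and the constant polynomials harmlessly, and that the case distinction (i)/(ii) in Theorem \ref{tr2} is not even needed here — only the representation formula and the two limits in \eqref{aac2x}. Alternatively, and perhaps cleaner to state, one can simply note that for $n\in\{1,2\}$ a much more elementary maximum-principle argument is available (as indicated in the introduction), but the route through Theorem \ref{tr2} is self-contained given what precedes.
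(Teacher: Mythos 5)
Your proposal follows essentially the same route as the paper: invoke Theorem~\ref{tr2} to reduce to $u = v + \beta\ln|x| + c$ for $n\in\{1,2\}$, then show that integrability of $e^{nu}$ at infinity and at the origin imposes incompatible constraints on $\beta$ (finiteness near $\infty$ forces $\beta \leq -1-\Lambda/\gamma_n$, finiteness near $0$ forces $\beta \geq -1$, contradiction since $\Lambda>0$). This is exactly the paper's argument, with the non-strict inequalities being the careful version of what you write.

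Two small inaccuracies in the write-up are worth flagging. First, the $\epsilon$-argument in your last paragraph goes in the wrong direction: to deduce a constraint on $\beta$ from $\int e^{nu} < \infty$ you need a \emph{lower} bound $e^{nu(x)} \geq c\,|x|^{n(\Lambda/\gamma_n+\beta) - n\epsilon}$ for $|x|$ large (and similarly $e^{nu(x)} \geq c\,|x|^{n\beta - n\epsilon}$ near $0$), whereas you state an \emph{upper} bound $e^{nu(x)} \leq |x|^{\cdots + n\epsilon}$; an upper bound would prove finiteness of the integral from a constraint on $\beta$, not the converse. The intended conclusion is right, but the implication as written runs backwards. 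Second, the paper defines $\gamma_n = (n-1)!\,|\mathbb{S}^{n}|/2$ (the volume of $\mathbb{S}^n$, not $\mathbb{S}^{n-1}$), so $\gamma_1 = \pi$ and $\gamma_2 = 2\pi$; your numerical values are off, but this is inconsequential since the argument only uses $\Lambda/\gamma_n > 0$.
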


Let us briefly describe the strategy used to prove Theorem \ref{tr2} and the main difficulties.  An important step in the proof is to employ Bôcher's theorem from \cite{bocher}, in conjunction with the Kelvin transform. Another difficulty, which turns to be substantially harder than that in the positive curvature case, is to prove upper-boundedness of polynomials. Since we do not have an easy lower bound for the auxiliary function $v$,  the conventional method used by Martinazzi \cite{lma} (see also \cite{AHY,TJ}) failed to work in our  situation.  Another major problem  is the description of the asymptotic behavior of the solution $u$ to (\ref{az2}). Since we also treat odd dimensions $n \geq 1$, we need to tackle some problems appearing in the nonlocal setting.  All of these  make the equation (\ref{az2}) a relatively challenging  object of \vspace{0.2cm} investigation.  

To overcome these obstacles, we adapt some parts of the strategy of \cite{hmm} to our setting.  But several key changes are made on account of the fact that the signs in certain inequalities (see  (\ref{f1a}) and (\ref{f1}) below) are reversed with respect to the positive curvature case.   For example, to prove upper-boundedness of $p$, we need to invoke an improved bound related to $v$,  which is different  from \cite{hmm}.  After refining our knowledge about $v$ and the polynomials $p$ and $q$, we can obtain the asymptotic behavior of $v$, and hence of $u$, at infinity \vspace{0.2cm} and zero.

In the regular case (\ref{az1}),  we give an improvement of Theorem \ref{tr2}  in dimensions 3 and 4. More precisely, 
\begin{theorem}\label{th2}
Let  $u$ be a solution to (\ref{az1}).  For $n\in\{3,4\}$,  one has $u(x)=v(x)+p(x)$, where 
the polynomial $p$ is quadratic and  $v$ has the asymptotic behavior
	\begin{equation*}\label{aak2}
		v(x)=\frac{\Lambda}{\gamma_n}\ln(|x|)+\tilde{c}_0+O(|x|^{-\tau}) \quad \quad \text{as}\quad |x|\rightarrow \infty
	\end{equation*}
	for some constant $\tilde{c}_0\in\mathbb{R}$ and every $0<\tau<1$, and $	D^kv(x)=O(|x|^{-k})$ as $|x|\rightarrow \infty$ for every integer $0<k\leq n-1$. 
\end{theorem}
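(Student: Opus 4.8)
The plan is to first combine the classification in Theorem~\ref{tr2} with the finiteness of $\Lambda$ to pin down the structure $u=v+p$ with $p$ a genuine quadratic polynomial whose leading form is negative definite, and then to read off the refined asymptotics of $v$ and of its derivatives directly from the representation~(\ref{aaz1}), using that $e^{nu}$ then decays like a Gaussian. Since a solution of~(\ref{az1}) is in particular a solution of~(\ref{az2}), Theorem~\ref{tr2} provides
\[
u(x)=v(x)+p(x)+q\!\left(\tfrac{x}{|x|^{2}}\right)+\beta\ln|x|,
\]
with $p,q$ of even degree at most $n-1\in\{2,3\}$ and bounded from above, and with $v$ satisfying~(\ref{aac2x}). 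Because every solution of~(\ref{az1}) is smooth on all of $\R^{n}$ (as recalled before Theorem~A), $u$ is bounded near the origin; since $q$ is bounded above and of even degree, a non-constant $q$ would send $q(x/|x|^{2})$ to $-\infty$ along some ray into $0$ faster than any power of $\ln|x|$, contradicting this boundedness, so $q$ must be constant and then $\beta=0$. Absorbing the constant into $p$, we reduce to $u=v+p$ with $\deg p$ even and at most $n-1$, i.e.\ $\deg p\in\{0,2\}$.

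Next I would use $\Lambda<\infty$ to exclude $\deg p=0$ and to force negative definiteness of the leading form. If $\deg p=0$ then $u=v+\mathrm{const}$, and by~(\ref{aac2x}) $e^{nu(x)}$ is comparable to $|x|^{n\Lambda/\gamma_{n}}$ as $|x|\to\infty$, which is not integrable since $\Lambda>0$. So $\deg p=2$; write $p=p_{2}+\ell$ with $p_{2}$ the leading quadratic form. Upper-boundedness of $p$ already forces $p_{2}\le 0$, and if $p_{2}$ had a nontrivial kernel $K\subset\R^{n}$, then upper-boundedness would further prevent the affine part $\ell$ from depending on $K$, so $p$ would be constant along every coset $y+K$; integrating $e^{nu}=e^{nv}e^{np}$ first over $K$, where by~(\ref{aac2x}) the factor $e^{nv}$ grows polynomially like $|y|^{n\Lambda/\gamma_{n}}$, would then give $\Lambda=+\infty$, a contradiction. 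Hence $p_{2}$ is negative definite, so $p(x)\le -c|x|^{2}+C$; combining with the bound $v(y)\le(\tfrac{\Lambda}{\gamma_{n}}+1)\ln|y|$ from~(\ref{aac2x}) we obtain
\[
e^{nu(y)}=e^{n(v(y)+p(y))}\le C\,e^{-c|y|^{2}}\qquad\text{for }|y|\text{ large},
\]
so $f:=e^{nu}$ has finite moments $\int_{\R^{n}}|y|^{m}f\,dy$ of every order and $\int_{\R^{n}}\ln(1+|y|)\,f\,dy<\infty$.

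With this decay in hand the asymptotics of $v$ follow from~(\ref{aaz1}) by elementary potential estimates. Writing $\ln\frac{1+|y|}{|x-y|}=\ln(1+|y|)-\ln|x|-\ln\frac{|x-y|}{|x|}$ gives
\[
v(x)=\frac{\Lambda}{\gamma_{n}}\ln|x|+\tilde c_{0}+\frac{1}{\gamma_{n}}\int_{\R^{n}}\ln\frac{|x-y|}{|x|}\,f(y)\,dy,\qquad \tilde c_{0}:=-\frac{1}{\gamma_{n}}\int_{\R^{n}}\ln(1+|y|)\,f(y)\,dy,
\]
so it remains to bound the last integral. On $\{|y|\ge\sqrt{|x|}\}$ the weight $f$ is exponentially small, and, together with the local integrability of $\ln$ near $y=x$ and the crude bound $\ln|x-y|\le C(1+\ln(1+|y|))$ there, that region contributes $O(e^{-c|x|})$. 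On $\{|y|\le\sqrt{|x|}\}$ one Taylor-expands $\ln\frac{|x-y|}{|x|}=\frac12\ln\!\big(1-\tfrac{2\langle x,y\rangle}{|x|^{2}}+\tfrac{|y|^{2}}{|x|^{2}}\big)=-\tfrac{\langle x,y\rangle}{|x|^{2}}+O\!\big(\tfrac{|y|^{2}}{|x|^{2}}\big)$, and integrating against $f$ (using finiteness of $\int y f$ and $\int|y|^{2}f$) gives $O(|x|^{-1})$. Hence $v(x)=\frac{\Lambda}{\gamma_{n}}\ln|x|+\tilde c_{0}+O(|x|^{-1})$, which in particular is $O(|x|^{-\tau})$ for every $0<\tau<1$; one may alternatively use the Campanato-type estimate employed in proving Theorem~\ref{tr2} to get the $O(|x|^{-\tau})$ bound directly. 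For the derivatives, $D^{k}v(x)=\frac{1}{\gamma_{n}}\int_{\R^{n}}D^{k}_{x}\ln|x-y|\,f(y)\,dy$ for $1\le k\le n-1$; since $D^{k}_{x}\ln|x-y|$ is homogeneous of degree $-k$ in $x-y$, splitting at $|y|=|x|/2$ and using $k\le n-1<n$ (so $|x-y|^{-k}$ is locally integrable at $y=x$, where $f$ is exponentially small) yields $D^{k}v(x)=O(|x|^{-k})$.

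The hard part is not the potential estimates — routine once the Gaussian decay is available — but upgrading the output of Theorem~\ref{tr2}, which only provides an upper-bounded $p$, to negative definiteness of its leading quadratic form; this is exactly where $\Lambda<\infty$ enters essentially, via the polynomial growth of $v$ along the would-be kernel directions of $p_{2}$, and it is also what keeps the argument confined to $n\in\{3,4\}$, where $\deg p\le 2$ makes this analysis elementary.
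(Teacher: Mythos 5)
Your proof is correct, but it takes a genuinely different route from the paper's in all three main steps, and in one place it is actually sharper.

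For the decomposition $u=v+p$, you derive it from the singular classification of Theorem~\ref{tr2} by arguing at the origin: since $u$ is smooth (hence bounded) near $0$ and $v$ is continuous there, the singular pieces $q(x/|x|^{2})+\beta\ln|x|$ must be bounded near $0$, forcing $q$ constant and $\beta=0$. The paper instead reproves this from scratch in Lemma~\ref{lem2c} (via the Fourier/polyharmonic growth arguments of \cite{lm2,AHY}, Lemma~\ref{nn3} and Lemma~\ref{lem6}) without invoking the Bôcher decomposition at all; this is slightly more self-contained but longer. For negative definiteness of the leading quadratic form of $p$, you rule out a nontrivial kernel by integrating $e^{nu}$ along the kernel direction and using the lower bound on $v$ from \eqref{aac2x}; the paper's Lemma~\ref{leas4c} instead invokes the elementary coefficient argument \eqref{fc6}--\eqref{fa7} from the Remark at the end of Section~3. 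These two arguments are morally the same, both hinging on $\Lambda<\infty$ together with polynomial growth of $e^{nv}$. The most substantive difference is in the last step: the paper (Lemma~\ref{lea8}) obtains the $O(|x|^{-\tau})$ expansion by a Kelvin transform $\tilde v_{0}(x)=v(x/|x|^{2})+\tfrac{\Lambda}{\gamma_n}\ln|x|$ followed by $L^{p}$-to-$C^{0,\tau}$ regularity for \eqref{faa9}, which only gives $\tau<1$; your direct potential-theoretic expansion of $\ln\frac{|x-y|}{|x|}$ against the Gaussian weight $e^{nu}$ is more hands-on and actually yields the stronger rate $O(|x|^{-1})$, using finiteness of the first and second moments of $e^{nu}$. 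The derivative bounds $D^{k}v=O(|x|^{-k})$ are obtained by essentially the same splitting (near/far from $x$) as in Lemma~\ref{lemma13}, just with the Gaussian tail making the far region negligible. In short, your route is legitimate and somewhat more elementary in its potential-theory step, at the small cost of needing the additional (easy) argument that the singular terms vanish at the origin.
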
	

Clearly, Theorem \ref{tr2} gives the most general asymptotic behavior for solutions to (\ref{az2}) at zero and infinity. In the following  theorem, we construct solutions that have precisely the kind of behavior expected by  Theorem \ref{tr2}. 
\begin{theorem}\label{tr3s}
	Let $n\geq3$ and $p,q$ be polynomials of degree at most $n-1$. Assume that  one of the following holds: \vspace{0.1cm} \\
	\vspace{0.1cm} 
	(i) \, $\beta\in\mathbb{R}$, $p(x), \, q(x)\rightarrow-\infty$ as $ |x|\rightarrow\infty$;\\
	\vspace{0.1cm} 
	(ii) \, $\beta>-1$,  $q(x)$ is upper-bounded and  $p(x)\rightarrow-\infty$ as $ |x|\rightarrow\infty$;\\
	\vspace{0.1cm} 
	(iii) \, $\beta<-1$, $p(x)$ is upper-bounded and $q(x)\rightarrow-\infty$ as $ |x|\rightarrow\infty.$
	
	\noindent Then for every  $\Lambda>0$, 	there is a solution $u$ of (\ref{az2}) such that
	$$u(x)=\left(\frac{\Lambda}{\gamma_n}+\beta\right)\ln(|x|)+p(x)+o(\ln |x|) \quad \quad \text{as}\quad |x|\rightarrow \infty$$ 
	and  $$u(x)=\beta\ln(|x|)+q\left(\frac{x}{|x|^2}\right)+o(\ln |x|)\quad \quad \text{as}\quad |x|\rightarrow 0.$$ 
\end{theorem}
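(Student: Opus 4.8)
The plan is to adapt the variational machinery of \cite{y4} to the singular setting, using the ansatz suggested by Theorem \ref{tr2}. Given $p$, $q$, $\beta$ as in the statement, I would look for a solution of the form
\[
u(x) = \frac{\Lambda}{\gamma_n}\ln|x| + p(x) + \beta\ln|x| + q\!\left(\frac{x}{|x|^2}\right) + w(x) - \ln\!\left(\int_{\mathbb{R}^n} e^{n\varphi}\,dx\right)/n + (\text{normalizing const}),
\]
or more cleanly, fix the ``profile''
\[
\varphi(x) := p(x) + q\!\left(\tfrac{x}{|x|^2}\right) + \beta\ln|x|,
\]
and seek $u = \varphi + w$ where $w$ carries the logarithmic growth $\frac{\Lambda}{\gamma_n}\ln|x|$ at infinity and decays (in the appropriate averaged sense) at $0$. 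Plugging into \eqref{az2}, and using that $(-\Delta)^{n/2}$ annihilates $p$ (degree $\le n-1 < n$), that $q(x/|x|^2)$ is $(-\Delta)^{n/2}$-harmonic away from $0$ by the Kelvin transform, and that $(-\Delta)^{n/2}\ln(1/|x|) = \gamma_n\delta_0$, the equation for $w$ becomes $(-\Delta)^{n/2}w = -e^{n\varphi} e^{nw}$ on $\mathbb{R}^n\setminus\{0\}$ with the volume constraint $\int e^{n\varphi}e^{nw} = \Lambda$ (the $\delta_0$ contributions being harmless since we work on the punctured space and absorb them). So the task reduces to: for the fixed weight $V := e^{n\varphi} \ge 0$, which under hypotheses (i)--(iii) satisfies suitable integrability at $0$ and decay at $\infty$, solve $(-\Delta)^{n/2}w = -V e^{nw}$ with $\int V e^{nw} = \Lambda$.

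For this I would set up the variational problem exactly as in \cite{y4}: after a Moser--Trudinger-type normalization, minimize (or find a critical point of) a functional of the shape
\[
J(w) = \frac{1}{2}\|(-\Delta)^{n/4}w\|_{L^2}^2 + \frac{\Lambda}{\gamma_n}\,\overline{w} \;+\; \frac{\Lambda}{n}\ln\!\left(\frac{1}{\Lambda}\int_{\mathbb{R}^n} V\, e^{n(w-\overline{w})}\,dx\right)
\]
over a function space encoding the prescribed growth (e.g. $w$ modulo constants in a homogeneous Sobolev-type space, with a Lagrange multiplier handling the mass $\Lambda$). The first variation gives $(-\Delta)^{n/2}w = -\mu V e^{nw}$; one then checks the multiplier $\mu$ equals the right constant by testing against $1$ and using the distributional identity for $(-\Delta)^{n/2}\ln(1/|x|)$ together with the finite-volume normalization. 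The key analytic inputs are: (a) a sharp Adams--Moser--Trudinger inequality (even and odd $n$) controlling $\int V e^{nw}$ in terms of $\|(-\Delta)^{n/4}w\|_2^2$, so that $J$ is bounded below and coercive for the relevant range of $\Lambda$; (b) compactness of minimizing sequences — here the weight $V$ and the subtraction of the average $\overline w$ must conspire to prevent vanishing/concentration at $0$ and at $\infty$. Because $K = -1 < 0$, the nonlinearity enters $J$ with a sign that makes the functional \emph{convex-like} in the troublesome direction (unlike the positive-curvature case), so coercivity should actually be easier than in the standard prescribed-$Q$-curvature problem; this is the structural reason one expects existence for \emph{every} $\Lambda > 0$.

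Once $w$ is produced, the remaining work is to verify the claimed asymptotics. This is a bootstrap/elliptic-regularity argument: with $V e^{nw} \in L^1$ one writes $w = -\frac{1}{\gamma_n}\int \ln\frac{1+|y|}{|x-y|} V(y)e^{nw(y)}\,dy + (\text{poly})$, identifies the polynomial part as $0$ by the growth restrictions built into the function space (arguing as in the proof of Theorem \ref{tr2}, via Bôcher's theorem and the Kelvin transform), and then reads off
\[
w(x) = \frac{\Lambda}{\gamma_n}\ln|x| + o(\ln|x|) \text{ as } |x|\to\infty, \qquad w(x) = o(\ln|x|) \text{ as } |x|\to 0,
\]
from the two limits in \eqref{aac2x} applied to this potential. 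Adding back $\varphi$ gives the two displayed expansions. I expect the \textbf{main obstacle} to be step (b) above: ruling out loss of compactness for minimizing sequences in the presence of both the singular weight $V$ near $0$ (which may blow up or degenerate depending on the sign of $\beta$ and whether $q$ is merely bounded vs.\ $\to -\infty$) and the unbounded domain — in particular the three cases (i)--(iii) likely require slightly different treatments of the behavior at $0$, matching precisely the three regimes of $\beta$ relative to $-1$ that already appeared in Theorem \ref{tr2}. A secondary technical point, only in odd $n$, is that all the above must be phrased with the nonlocal operator $(-\Delta)^{n/2} = (-\Delta)^{1/2}\circ(-\Delta)^{(n-1)/2}$ and the spaces $L_\sigma(\mathbb{R}^n)$ from the introduction, so that the integration-by-parts identity \eqref{nnx1} and the Kelvin transform of $q(x/|x|^2)$ are justified in the distributional sense.
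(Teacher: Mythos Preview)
Your variational strategy is correct in spirit and close to the paper's, but you are making life harder than necessary by working directly on $\mathbb{R}^n$. The paper's key simplification --- which dissolves exactly the ``main obstacle'' you flag --- is to pull everything back to the compact sphere $\mathbb{S}^n$ via stereographic projection $\pi$. One writes
\[
u = \tilde\psi + c_{\tilde\psi} - \tfrac{\Lambda}{\Lambda_1}w_0 + \beta\ln|x| + p(x) + q\!\left(\tfrac{x}{|x|^2}\right),
\]
where $w_0=\ln\frac{2}{1+|x|^2}$ carries the logarithmic growth (so $-\frac{\Lambda}{\Lambda_1}w_0\sim\frac{\Lambda}{\gamma_n}\ln|x|$), and seeks $\tilde\psi=\psi_0\circ\pi^{-1}$ with $\psi_0\in H^{n/2}(\mathbb{S}^n)$. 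The functional becomes
\[
J(\psi)=\int_{\mathbb{S}^n}\Big(\tfrac12\big|(P^n_{g_0})^{1/2}\psi\big|^2-\tfrac{(n-1)!\Lambda}{\Lambda_1}\psi\Big)\,dg_0+\frac{\Lambda}{n}\ln\!\int_{\mathbb{S}^n}\tilde K\,e^{n(\psi-w_0\circ\pi)}\,dg_0,
\]
on a \emph{compact} manifold. Compactness of minimizing sequences is then automatic from the compact embeddings $\psi\mapsto\psi$ and $\psi\mapsto e^\psi$ from $H^{n/2}(\mathbb{S}^n)$ into $L^p(\mathbb{S}^n)$; no concentration/vanishing analysis is needed, and the three cases (i)--(iii) do not require separate compactness arguments.

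Second, coercivity does \emph{not} come from Moser--Trudinger. Because the curvature is negative the logarithmic term enters $J$ with a $+$ sign, so one needs a \emph{lower} bound on $\ln\int\tilde K e^{n\psi}$, not an upper bound. Jensen's inequality gives this directly: $\ln\int\tilde K e^{n(\psi-w_0\circ\pi)}\,dg_0\ge n\bar\psi+C$, provided $\ln\tilde K\in L^1(\mathbb{S}^n)$ --- and verifying this integrability is precisely (and only) where the hypotheses (i)--(iii) on $p$, $q$, $\beta$ enter. The linear term $n\bar\psi$ cancels the linear term in $J$, leaving $J(\psi)\ge\frac12\|(P^n_{g_0})^{1/2}\psi\|_{L^2}^2-C$, hence coercivity for \emph{every} $\Lambda>0$. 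Your remark that the negative sign makes the functional ``convex-like'' is exactly this phenomenon, but the tool is Jensen, not Adams--Moser--Trudinger.

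Finally, the asymptotics come for free. Since $P^n_{g_0}\psi_0\in L^p(\mathbb{S}^n)$, elliptic regularity gives $\psi_0\in C^0(\mathbb{S}^n)$; continuity at the north pole immediately yields $\tilde\psi(x)\to\psi_0(N)$ as $|x|\to\infty$, so $\tilde\psi=o(\ln|x|)$. No B\^ocher argument or potential-theoretic identification of a polynomial part is required at this stage.
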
	

In the proof of Theorem \ref{tr3s}, we proceed by  a standard variational argument. This method has been already exploited in several works, see for example, \cite{T-L,A-C,TJ,AH}.  Based on this, we are able to produce entire solutions of (\ref{az2})  with prescribed  	\vspace{0.2cm}  polynomials.

The remaining part of this paper is structured as follows: In Section 2,  we establish some key facts for the proof of Theorems \ref{tr2} and \ref{th2x}. In Section 3,  we complete the proof of these theorems.  In Section 4,  we prove Theorem  \ref{th2}.  Section 5 is devoted to the proof of Theorem \ref{tr3s}. Section 6 is an appendix and contains some auxiliary lemmas. \vspace{0.2cm} 

 In what follows, we  often  denote various
constants by the same generic letter $c$ or $C$. We also write  for simplicity $B_r(x):=\{y\in \mathbb{R}^n:\, |y-x|< r\}$ and $B_r:=B_r(0).$

\section{Preliminary steps}

In this section we make some essential preparations towards the proof of Theorems \ref{tr2} and \ref{th2x}. The first of the crucial ingredients we derive here consists in various bounds on the function $v$. The second one is a Bôcher-type decomposition of $u$ into its regular part $v$ and possibly singular harmonic remainder terms.  We present the two points into separate subsections below. 
	

\subsection{Bounds on \vspace{0.1cm}  $v$}
 To begin with,  we have the  lemma as follows.
\begin{lemma}\label{lea1c}
	Let $n\geq 1$, $u$ be a solution of (\ref{az2})  and $v(x)$ be defined as in (\ref{aaz1}). Then $v(x)\leq 0$ for $|x|< 1$, and 
\begin{equation}\label{f1a}
	v(x)\leq \frac{\Lambda}{\gamma_{n}}\ln (|x|) \quad \quad \quad \text{for}\quad  |x|\geq 1.
\end{equation}
\end{lemma}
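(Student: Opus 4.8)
The plan is to exploit the explicit integral representation \eqref{aaz1} and estimate the kernel $\ln\frac{1+|y|}{|x-y|}$ directly against the finite measure $e^{nu(y)}\,dy$ of total mass $\Lambda$. The key elementary inequality is that for every $x,y\in\mathbb{R}^n$,
\[
\frac{1+|y|}{|x-y|}\geq \frac{1}{\max\{1,|x|\}},
\]
equivalently $|x-y|\leq (1+|y|)\max\{1,|x|\}$, which is just the triangle inequality $|x-y|\leq|x|+|y|\leq\max\{1,|x|\}(1+|y|)$. Taking $\ln$ of this gives $\ln\frac{1+|y|}{|x-y|}\geq-\ln\max\{1,|x|\}$ for all $y$, so that
\[
v(x)=-\frac{1}{\gamma_n}\int_{\mathbb{R}^n}\ln\!\left(\frac{1+|y|}{|x-y|}\right)e^{nu(y)}\,dy\leq\frac{1}{\gamma_n}\ln\max\{1,|x|\}\int_{\mathbb{R}^n}e^{nu(y)}\,dy=\frac{\Lambda}{\gamma_n}\ln\max\{1,|x|\}.
\]
For $|x|\geq1$ this is exactly \eqref{f1a}, and for $|x|<1$ it gives $v(x)\leq 0$.

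A preliminary point that must be addressed before running this argument is that the integral defining $v(x)$ is a priori only conditionally convergent: the kernel $\ln\frac{1+|y|}{|x-y|}$ changes sign (it is negative where $|x-y|>1+|y|$, i.e. far from $x$, and positive near $x$) and is not absolutely integrable against $e^{nu}\,dx$ without further information, and it has a logarithmic singularity at $y=x$. So first I would record that $v$ is well defined: the singularity at $y=x$ is integrable since $u$ is smooth away from the origin (Lemma \ref{smooth 1}) hence $e^{nu}\in L^\infty_{loc}$ away from $0$, and near $y=0$ one uses the local regularity/integrability of $u$; the behavior at $|y|\to\infty$ is controlled because $\ln\frac{1+|y|}{|x-y|}$ grows at most logarithmically while $\int e^{nu}<\infty$. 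Once $v$ is known to be well defined as a Lebesgue integral of a function that is bounded above pointwise by the $y$-independent constant $\frac{1}{\gamma_n}\ln\max\{1,|x|\}\cdot\frac{e^{nu(y)}}{\text{(mass)}}$— more precisely once the integrand $-\ln\frac{1+|y|}{|x-y|}e^{nu(y)}$ is dominated above by $\ln\max\{1,|x|\}\,e^{nu(y)}\in L^1$ — the monotonicity of the integral yields the stated bounds immediately.

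I do not expect any serious obstacle here; the only thing requiring a little care is the justification of the elementary kernel inequality (splitting into the cases $|x|\le 1$ and $|x|>1$ and using $|x-y|\le \max\{1,|x|\}+|y|\le\max\{1,|x|\}(1+|y|)$) and the remark that the integral is genuinely defined, which can be cited from the setup in \cite{AH,lm2} or Lemma \ref{smooth 1}. This is in contrast with the harder \emph{lower} bounds on $v$, whose absence is precisely what the introduction flags as the source of the main difficulties later; but for the upper bound \eqref{f1a} the triangle inequality is all that is needed.
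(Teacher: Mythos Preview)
Your proposal is correct and is essentially the same argument the paper has in mind: the paper simply cites \cite[Lemma~3.1]{hmm} (with the sign of $v$ reversed), and that lemma is proved precisely via the elementary kernel bound $|x-y|\le|x|+|y|\le\max\{1,|x|\}(1+|y|)$ you wrote down. Your additional remarks on well-definedness of the integral are fine and in fact make the argument self-contained, though for the purpose of the upper bound it suffices to note that the positive part of the integrand is dominated by $\ln\max\{1,|x|\}\,e^{nu(y)}\in L^1$, so the Lebesgue integral exists in $[-\infty,\infty)$ and the monotonicity step is justified.
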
	
\proof  The proof is identical to \cite[Lemma 3.1] {hmm}, only by substituting $v$ by $-v$. $\hfill\Box$\\

Conversely, the following lower bound on $v$ holds.
\begin{lemma}\label{lea2c}
	Let $n\geq 1$.	For any $\epsilon>0$, there exists $R>0$ such that for all $|x|\geq R$,
	\begin{equation}\label{f1}
		v(x)\geq \left(\frac{\Lambda}{\gamma_{n}}-\epsilon\right)\ln (|x|)+\frac{1}{\gamma_{n}}\int_{B_1(x)}\ln (|x-y|)e^{nu(y)}dy.
	\end{equation}
	Moreover, one has $(-v)^+\in L^1(\mathbb{R}^n).$
\end{lemma}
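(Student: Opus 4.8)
The plan is to first prove the lower bound (\ref{f1}) by estimating the representation $-\gamma_n v(x)=\int_{\mathbb{R}^n}\ln\frac{1+|y|}{|x-y|}\,e^{nu(y)}\,dy$ from below---equivalently, bounding it above by $-\Lambda\ln|x|-\int_{B_1(x)}\ln|x-y|\,e^{nu}\,dy+o(\ln|x|)$---and then to deduce $(-v)^+\in L^1(\mathbb{R}^n)$ from (\ref{f1}) together with Lemma~\ref{lea1c}. For the lower bound I would fix $\epsilon>0$, pick $A=A(\epsilon)$ so large that $\int_{\{|y|>A\}}e^{nu}\,dy$ is as small as needed, and for $|x|>2A$ split $\mathbb{R}^n$ into the four pieces $B_1(x)$, $\{|x-y|\geq1,\ |y|\leq A\}$, $\{|x-y|\geq1,\ A<|y|\leq|x|/2\}$ and $\{|x-y|\geq1,\ |y|>|x|/2\}$. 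On $B_1(x)$ one has $1+|y|\leq 2+|x|$, so this piece contributes $-\int_{B_1(x)}\ln|x-y|\,e^{nu}\,dy$ plus $\ln(2+|x|)\int_{B_1(x)}e^{nu}\,dy=o(\ln|x|)$, using $\int_{B_1(x)}e^{nu}\to0$ as $|x|\to\infty$. On $\{|x-y|\geq1,\ |y|\leq A\}$ one has $|x-y|\geq|x|/2$, producing the term $-\ln|x|\int_{\{|y|\leq A\}}e^{nu}\,dy\leq-(\Lambda-\epsilon\gamma_n)\ln|x|$ up to an $A$-dependent constant. On the remaining two pieces crude bounds suffice: if $|y|\leq|x|/2$ then $|x-y|\geq|y|$, hence $\ln\frac{1+|y|}{|x-y|}\leq\ln 2$; and if $|y|>|x|/2$, splitting once more according to whether $|x-y|\leq|x|$ or $|x-y|>|x|$ gives $\ln\frac{1+|y|}{|x-y|}\leq\ln(3|x|)$ in the first subcase and $\leq\ln 3$ in the second. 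In all of these the relevant integral of $e^{nu}$ is over a set on which it tends to $0$, so the total contribution is $o(\ln|x|)$. Summing, dividing by $-\gamma_n$, and (after relabeling $\epsilon$) absorbing constants and $o(\ln|x|)$ terms for $|x|\geq R=R(\epsilon)$ yields (\ref{f1}).

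For the $L^1$ statement I would split $\mathbb{R}^n=B_R\cup(\mathbb{R}^n\setminus B_R)$, applying (\ref{f1}) with a fixed $\epsilon<\Lambda/\gamma_n$ and its associated $R$. For $|x|\geq R$ the term $(\Lambda/\gamma_n-\epsilon)\ln|x|$ is positive, so (\ref{f1}) forces $(-v(x))^+\leq\frac{1}{\gamma_n}\int_{B_1(x)}(-\ln|x-y|)\,e^{nu(y)}\,dy$; integrating in $x$ and using Tonelli (the kernel is nonnegative) gives $\int_{\mathbb{R}^n\setminus B_R}(-v)^+\,dx\leq\frac{\Lambda}{\gamma_n}\int_{B_1}(-\ln|z|)\,dz<\infty$. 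For $|x|\leq R$ it is enough to know $v\in L^1(B_R)$, which follows from $\gamma_n|v(x)|\leq\int_{\mathbb{R}^n}\big|\ln\tfrac{1+|y|}{|x-y|}\big|\,e^{nu}\,dy$, Tonelli, and the bound $\int_{B_R}\big|\ln\tfrac{1+|y|}{|x-y|}\big|\,dx\leq C(R)$ uniform in $y$: for $|y|\leq 2R$ this is just the integrable logarithmic singularity, while for $|y|>2R$ the ratio $\frac{1+|y|}{|x-y|}$ stays between two positive constants depending only on $R$ whenever $x\in B_R$, so the integrand is bounded.

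The step I expect to be the main obstacle is pinning down the exact constant $\Lambda$ in front of $\ln|x|$ in (\ref{f1}). Unlike the positive-curvature case there is no cheap lower bound for $v$ to bootstrap from, so one must extract $\Lambda$ directly from the bulk of the mass of $e^{nu}$; this is precisely what forces the cutoff $A$, separating the range of $y$ where $\ln(1+|y|)$ is a genuine constant from the range where $|y|$ grows with $|x|$ but the elementary inequality $|x-y|\geq|y|$ still keeps $\ln\frac{1+|y|}{|x-y|}$ bounded, and then letting $A\to\infty$. A secondary point to watch is that one should never split $\ln\frac{1+|y|}{|x-y|}$ into $\ln(1+|y|)-\ln|x-y|$, since a priori only the combination is integrable against $e^{nu}$; all the estimates above are carried out with the kernel kept together. (If $\int_{\mathbb{R}^n}\ln(2+|y|)\,e^{nu}\,dy<\infty$ were already available, this bookkeeping would simplify, but the structure of the argument would be unchanged.)
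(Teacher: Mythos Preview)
Your proposal is correct and follows essentially the same approach as the paper. The paper itself is terse for (\ref{f1}), merely citing \cite[Lemma~2.4]{cs} and \cite[Lemma~11]{lm2}; your four-region decomposition with the cutoff $A$ is exactly the standard argument behind those references, and your treatment of the $L^1$ statement---using (\ref{f1}) plus Fubini/Tonelli on $\mathbb{R}^n\setminus B_R$ and a direct bound on $\int_{B_R}|v|\,dx$---matches the paper's proof almost line for line (the paper writes $B_1$ rather than $B_R$, but this is harmless since $v$ is locally bounded on the annulus in between).

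One small remark: your closing caveat that ``one should never split $\ln\frac{1+|y|}{|x-y|}$'' is slightly at odds with what you actually do on $B_1(x)$, where you bound $\ln(1+|y|)\leq\ln(2+|x|)$ and isolate $-\ln|x-y|$. That splitting is perfectly legitimate there because the domain of integration is bounded; the only place one must be careful is when integrating over all of $\mathbb{R}^n$, and your region-by-region estimates already avoid that issue.
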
	
\proof Similar to \cite [Lemma 2.4] {cs} (also see \cite[Lemma 11]{lm2}), one can easily get (\ref{f1}). For the second assertion, using (\ref{f1}) and the Fubini theorem, we deduce that
\begin{equation}\label{f2c}
	\begin{split}
		\int_{\mathbb{R}^n\backslash B_1}(-v)^+ dx&\leq C \int_{\mathbb{R}^n}\int_{\mathbb{R}^n}\chi_{|x-y|<1} \ln \left(\frac{1}{|x-y|}\right)e^{nu(y)}dydx\\
		&= C\int_{\mathbb{R}^n} e^{nu(y)} \int_{B_1(y)}\ln \left( \frac{1}{|x-y|}\right)  dxdy\\
		&\leq C \int_{\mathbb{R}^n} e^{nu(y)} dy\\
		&\leq C,
	\end{split}
\end{equation}
where  $C>0$ is a constant. Moreover,  notice by (\ref{aaz1}) that
\begin{equation*}\label{fpt6}
	\begin{split}
		\int_{B_1}|v(x)|dx &\leq C \int_{B_1}\int_{\mathbb{R}^n} \ln\left(\frac{1+|y|}{|x-y|}\right)
		e^{n u(y)}dydx\\
		&\leq C\int_{\mathbb{R}^n} e^{n u(y)}  \int_{B_1}  \ln\left(\frac{1+|y|}{|x-y|}\right)dxdy\\
		&\leq C.
	\end{split}
\end{equation*}
This  together with (\ref{f2c}), we conclude $(-v)^+\in L^1(\mathbb{R}^n).$ Hence the lemma is confirmed. $\hfill\Box$\\

An easy, but useful consequence of Lemma \ref{lea2c} is that there is a set $S_0\subset \mathbb{R}^n$ of finite measure such that
\begin{equation}\label{set2}
	v(x)\geq -C\quad \quad \text{ on}\quad \quad  \mathbb{R}^n\backslash S_0.
\end{equation}

The following lemma can be seen as an averaged version of the lower bound (\ref{f1}) on $v$. The advantage is that the averaging permits to drop the hard-to-control second term on the right side of (\ref{f1}). This will turn out to be of decisive importance in our proof of Theorem \ref{tr2}. 

\begin{lemma} \label{lemma v lower bound integral}
	For any $q \geq 1$ and $\eps_1, \eps_2 >0$ there are constants $c = c(q, \eps_1, \eps_2)$ and $R= R(q, \eps_1, \eps_2)$ such that for all $0 < \rho \leq 1$, 
	$$ \frac{1}{\rho^{n+\eps_2}} \int_{B_\rho(x)} e^{qv(z)} dz \geq c |x|^{\left(\frac{\Lambda}{\gamma_n} - \eps_1\right) q} \qquad \text{ for\;  all }\quad |x| \geq R. $$
\end{lemma}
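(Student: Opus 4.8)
The plan is to deduce this averaged lower bound from the pointwise lower bound \eqref{f1} of Lemma \ref{lea2c}, the key gain being that integration over a ball kills the singular term $\frac{1}{\gamma_n}\int_{B_1(x)} \ln|x-y| e^{nu(y)}\,dy$, which by itself is only locally integrable and not bounded below. First I would fix $q\geq 1$, $\eps_1,\eps_2>0$. Set $\eps := \eps_1 \gamma_n / 2$ (or any convenient fraction), apply Lemma \ref{lea2c} with this $\eps$ to get $R_0$ so that for $|z|\geq R_0$,
\[
v(z) \geq \left(\tfrac{\Lambda}{\gamma_n}-\tfrac{\eps_1}{2}\right)\ln|z| + \tfrac{1}{\gamma_n}\int_{B_1(z)}\ln|z-y|\,e^{nu(y)}\,dy.
\]
Then for $|x|\geq R_0 + 1$ and $0<\rho\leq 1$, every $z\in B_\rho(x)$ satisfies $|z|\geq R_0$ and, since $\tfrac{1}{2}|x|\leq |z|\leq 2|x|$ for $|x|$ large, $\ln|z| \geq \ln|x| - \ln 2$. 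Hence, pointwise on $B_\rho(x)$,
\[
v(z) \geq \left(\tfrac{\Lambda}{\gamma_n}-\tfrac{\eps_1}{2}\right)\ln|x| - C_1 + \tfrac{1}{\gamma_n}\int_{B_1(z)}\ln|z-y|\,e^{nu(y)}\,dy
\]
with $C_1$ depending only on $\eps_1,\Lambda,\gamma_n$ (absorbing $(\tfrac{\Lambda}{\gamma_n}-\tfrac{\eps_1}{2})\ln 2$, and noting $\ln|x|$ has a sign for $|x|$ large).

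Next I would exponentiate and use Jensen's inequality. Since $t\mapsto e^{qt}$ is convex,
\[
\frac{1}{|B_\rho(x)|}\int_{B_\rho(x)} e^{qv(z)}\,dz \geq \exp\!\left(\frac{q}{|B_\rho(x)|}\int_{B_\rho(x)} v(z)\,dz\right).
\]
Plugging in the pointwise bound above,
\[
\frac{1}{|B_\rho(x)|}\int_{B_\rho(x)} v(z)\,dz \geq \left(\tfrac{\Lambda}{\gamma_n}-\tfrac{\eps_1}{2}\right)\ln|x| - C_1 + \frac{1}{\gamma_n|B_\rho(x)|}\int_{B_\rho(x)}\int_{B_1(z)}\ln|z-y|\,e^{nu(y)}\,dy\,dz.
\]
Now the crucial estimate: the double integral term, call it $\mathrm{I}(\rho,x)$, must be shown to be bounded below by something like $-C_2(1+|\ln\rho|)$ uniformly. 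Indeed, writing $\ln|z-y| = \ln^+|z-y| - \ln^-|z-y|$ and noting $\ln^+|z-y|\leq 0$ is harmless (it only helps) — wait, more carefully $\ln|z-y|$ can be negative when $|z-y|<1$, which is the bad sign — I would bound from below by dropping the positive part and writing $\mathrm{I}(\rho,x) \geq -\tfrac{1}{\gamma_n|B_\rho(x)|}\int_{B_\rho(x)}\int_{B_1(z)}\ln^-|z-y|\,e^{nu(y)}\,dy\,dz$ where $\ln^-|z-y| = \max(-\ln|z-y|,0)\geq 0$. By Fubini this equals $-\tfrac{1}{\gamma_n|B_\rho(x)|}\int_{\mathbb{R}^n} e^{nu(y)}\Big(\int_{B_\rho(x)\cap\{|z-y|<1\}}\ln^-|z-y|\,dz\Big)dy$. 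For fixed $y$, the inner integral is at most $\int_{B_\rho(z_0)}\ln^-|z-z_0|\,dz$ for the worst center, but since $B_\rho(x)$ has radius $\rho\leq 1$ and we divide by $|B_\rho(x)| = c_n\rho^n$, a scaling computation shows $\tfrac{1}{c_n\rho^n}\int_{B_\rho}\ln^-|w|\,dw \leq C(1+|\ln\rho|)$. Combining with $\int e^{nu(y)}\,dy = \Lambda < \infty$ gives $\mathrm{I}(\rho,x)\geq -C_2(1+|\ln\rho|)$ with $C_2$ independent of $x,\rho$.

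Putting everything together,
\[
\frac{1}{|B_\rho(x)|}\int_{B_\rho(x)} e^{qv(z)}\,dz \geq \exp\!\left(q\left(\tfrac{\Lambda}{\gamma_n}-\tfrac{\eps_1}{2}\right)\ln|x| - qC_1 - qC_2(1+|\ln\rho|)\right) = c_3\, |x|^{q(\frac{\Lambda}{\gamma_n}-\frac{\eps_1}{2})}\, \rho^{-qC_2},
\]
so that, multiplying by $|B_\rho(x)| = c_n\rho^n$,
\[
\int_{B_\rho(x)} e^{qv(z)}\,dz \geq c_4\, |x|^{q(\frac{\Lambda}{\gamma_n}-\frac{\eps_1}{2})}\, \rho^{\,n - qC_2}.
\]
Finally, dividing by $\rho^{n+\eps_2}$ gives $\rho^{-\eps_2 - qC_2}\geq 1$ for $0<\rho\leq 1$ (since the exponent is negative), hence
\[
\frac{1}{\rho^{n+\eps_2}}\int_{B_\rho(x)} e^{qv(z)}\,dz \geq c_4\, |x|^{q(\frac{\Lambda}{\gamma_n}-\frac{\eps_1}{2})} \geq c_4\, |x|^{q(\frac{\Lambda}{\gamma_n}-\eps_1)}
\]
for $|x|\geq R := R_0 + 1$ (the last step using $|x|\geq 1$ so the larger exponent dominates), which is the claim with $c = c_4$ and this $R$. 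The main obstacle is the careful handling of the singular double-integral term $\mathrm{I}(\rho,x)$ — specifically verifying that the averaging over $B_\rho(x)$ and the normalization by $\rho^n$ only produce a logarithmic-in-$\rho$ loss, which is then comfortably absorbed by the extra power $\rho^{\eps_2}$; the dependence of all constants on $q,\eps_1,\eps_2$ must be tracked to match the statement, but this is routine once the $\ln\rho$ bound is in hand.
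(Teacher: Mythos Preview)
There is a sign error in your final computation that hides a genuine gap. You write $\exp(-qC_2(1+|\ln\rho|)) = c_3\,\rho^{-qC_2}$, but for $0<\rho\leq 1$ one has $|\ln\rho| = -\ln\rho$, so in fact $\exp(-qC_2|\ln\rho|) = \rho^{+qC_2}$. After multiplying by $|B_\rho|$ and dividing by $\rho^{n+\eps_2}$ you are left with $\rho^{qC_2-\eps_2}$, and this is bounded below uniformly for $\rho\in(0,1]$ only if $qC_2\leq\eps_2$. Your $C_2$, however, arises from the estimate $\tfrac{1}{|B_\rho|}\int_{B_\rho(x)}\ln^-|z-y|\,dz\leq C(1+|\ln\rho|)$ combined with the \emph{full} mass $\int_{\R^n}e^{nu}=\Lambda$; this produces $C_2\sim C\Lambda/\gamma_n$, a fixed constant that cannot be pushed below the arbitrary $\eps_2/q$. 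With the correct sign your bound degenerates as $\rho\to 0$ and the lemma does not follow.

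The missing idea is exactly the one the paper exploits: for $z\in B_\rho(x)$ with $|x|$ large and $\rho\leq 1$, the inner integral $\int_{B_1(z)}\ln|z-y|\,e^{nu(y)}\,dy$ only involves $y$ with $|y|\geq |x|-2$, so the relevant mass is the \emph{tail} $\|e^{nu}\|_{L^1(B_R^c)}$, which tends to zero as $R\to\infty$. If you feed this into your Fubini step---replacing $\int_{\R^n}e^{nu}\,dy$ by $\int_{\{|y|\geq R-2\}}e^{nu}\,dy$ and choosing $R$ so large that $qC\|e^{nu}\|_{L^1(B_{R-2}^c)}/\gamma_n<\eps_2$---then your $C_2$ becomes smaller than $\eps_2/q$ and the argument closes. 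The paper implements this via two Jensen inequalities (one against the probability measure $e^{nu}\,dy/\|e^{nu}\|_{L^1(B_R^c)}$, one for the $dz$-average with the convex map $t\mapsto 1/t$), but your single-Jensen-plus-Fubini route would also succeed once the tail smallness is inserted.
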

\proof Fix $0 < \rho \leq 1$, $q \geq 1$ and $\eps_1, \eps_2 >0$. Take $R´\geq 1$ so large that the lower bound (\ref{f1})  holds with $\eps = \eps_1$ and such that $q \|e^{nu}\|_{L^1(B_R^c)} < \eps_2$. Here $B_R^c$ denotes the complement of the ball of radius $R$ centered at $0$. Then using (\ref{f1}),  we have for all $|x| \geq 2R$ 
\begin{equation}\label{yue1}
	\begin{split}
		\int_{B_\rho(x)} e^{qv(z)} dz &\geq \int_{B_\rho(x)} \exp\left[q \left(\frac{\Lambda}{\gamma_n} - \eps_1\right) \ln |z| - q \int_{B_1(z)} \ln \frac{1}{|z-y|} e^{nu(y)} dy \right] \, dz \\
		&= \int_{B_\rho(x)} |z|^{\left(\frac{\Lambda}{\gamma_n} - \eps_1\right)q}  \exp\left[ - q \int_{B_1(z)} \ln \frac{1}{|z-y|} e^{nu(y)} dy \right] \, dz \\
		&\geq c |x|^{\left(\frac{\Lambda}{\gamma_n} - \eps_1\right)q} \int_{B_\rho(x)}   \exp\left[ - q \int_{B_1(z)} \ln \frac{1}{|z-y|} e^{nu(y)} dy \right] \, dz.
	\end{split}
\end{equation}
 Since $-\ln$ is convex, we may apply Jensen's inequality to the inner integral. Write
\[ g_z(y) = \begin{cases}
	|z-y|^{-q\|e^{nu}\|_{L^1(B_R^c)}} & \text{ if } |z-y| < 1, \\
	1 &\text{ if } |z-y| \geq 1,
\end{cases}
\]
and define on $B_R^c$ the probability measure 
\[ d \mu(y) := \frac{e^{nu(y)} dy}{\|e^{nu}\|_{L^1(B_R^c)}}. \]
Thanks to Jensen's inequality, one finds
\begin{equation}\label{yue2}
	\begin{split}
		- q \int_{B_1(z)} \ln \frac{1}{|z-y|} e^{nu(y)} dy &= \int_{B_R^c} -\ln g_z(y) \, d \mu (y) \geq -\ln \left( \int_{B_R^c} g_z(y) \, d \mu (y) \right).
	\end{split}
\end{equation}
Plugging (\ref{yue2}) into  (\ref{yue1}), we obtain
\begin{align*}
	\int_{B_\rho(x)} e^{qv(z)} dz &\geq c |x|^{\left(\frac{\Lambda}{\gamma_n} - \eps_1\right)q} \int_{B_\rho(x)} \frac{1}{\int_{B_R^c} g_z(y) \, d \mu (y)} dz \\
	&\geq c |x|^{\left(\frac{\Lambda}{\gamma_n} - \eps_1\right)q} \int_{B_\rho(x)} \frac{1}{\int_{B_R^c} 1 + |z-y|^{-\eps_2} \, d \mu (y)} dz.
\end{align*}
Applying Jensen's inequality a second time, for the $dz$-integral and the convex function $t \mapsto 1/t$, gives
\begin{align*}
	\int_{B_\rho(x)} e^{qv(z)} dz &\geq c |B_\rho|^2 |x|^{\left(\frac{\Lambda}{\gamma_n} - \eps_1\right)q} \frac{1}{\int_{B_\rho(x)} \int_{B_R^c} 1 + |z-y|^{-\eps_2} \, d \mu (y) \, dz} \\
	&\geq c \rho^{2n} |x|^{\left(\frac{\Lambda}{\gamma_n} - \eps_1\right)q} \frac{1}{ \int_{B_R^c} \int_{B_\rho(x)} 1 + |z-y|^{-\eps_2}  \, dz \, d \mu (y)} \\
	&\geq c \rho^{n+\eps_2} |x|^{\left(\frac{\Lambda}{\gamma_n} - \eps_1\right)q}.
\end{align*}
For the last step, we used Fubini and the fact that $\int_{B_\rho(x)} |z-y|^{-\eps_2} dz  \leq c \rho^{n-\eps_2}$. 
This completes the proof. 	$\hfill\Box$\\

\subsection{A decomposition \vspace{0.1cm} of $u$}

Here is the main result of this subsection. 

\begin{lemma}\label{lexm2}
	Let $n\geq 1$, $u$ be a solution of (\ref{az2}) and $v(x)$ be defined as in (\ref{aaz1}). Then 
	\begin{equation}\label{fs6}
		u(x)=v(x)+p(x)+q\left(\frac{x}{|x|^2}\right)+\beta \ln(|x|),
	\end{equation}
	where $\beta\in \mathbb{R}$,  $p$ and $q$ are polynomials of degree at most $n-1$.  
\end{lemma}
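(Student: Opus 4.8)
\textbf{Proof proposal for Lemma \ref{lexm2}.}

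The plan is to study the difference $w := u - v$ and show that it is a (possibly singular) polyharmonic function on $\R^n \setminus \{0\}$ with controlled growth at $0$ and at $\infty$, and then apply Bôcher's theorem together with the Kelvin transform to decompose $w$. First I would check that $w$ satisfies $(-\Delta)^{n/2} w = 0$ on $\R^n \setminus \{0\}$ in the distributional sense: by construction $v$ solves $(-\Delta)^{n/2} v = -e^{nu}$ on all of $\R^n$ (since the convolution kernel $\ln\frac{1+|y|}{|x-y|}$ is, up to the normalizing constant $\gamma_n$, a fundamental solution of $(-\Delta)^{n/2}$, modulo a smooth correction coming from the $\ln(1+|y|)$ factor which contributes a polynomial in $x$), so subtracting kills the right-hand side away from the origin. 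Here one must be careful in the odd-dimensional nonlocal case: one works with the definition of weak solution in \eqref{nnx1}–\eqref{nnx2} and uses that $v \in L_{\sigma}$ for appropriate $\sigma$, which follows from the bounds of Lemma \ref{lea1c} and Lemma \ref{lea2c} plus the finite-volume condition. The regularity statement (Lemma \ref{smooth 1}, referenced in the text) gives that $u$, hence $w$, is smooth on $\R^n \setminus \{0\}$.

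Next I would record growth bounds on $w$. At infinity: from \eqref{f1a} we have $v(x) \leq \frac{\Lambda}{\gamma_n}\ln|x|$, and from \eqref{f1} a matching lower bound up to $\eps \ln|x|$ plus a locally integrable error; combined with the smoothness of $u$ and elliptic/nonlocal estimates one gets that $w$ grows at most polynomially at $\infty$, in fact $w(x) = O(|x|^{n-1+\eps})$ type bounds suffice. Near the origin: $v$ is bounded (indeed $v \leq 0$ on $B_1$ by Lemma \ref{lea1c}, and a lower bound follows similarly since the singular part of the kernel is locally integrable against $e^{nu}\,dx$), so the behavior of $w$ at $0$ is that of $u$, which by the regularity discussion has at worst a logarithmic-or-polynomial-type singularity; in any case $w$ is a distribution on $\R^n$ that is smooth and polyharmonic away from $0$.

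Now comes the structural heart of the argument. Since $(-\Delta)^{n/2} w = 0$ on $\R^n \setminus \{0\}$, a Bôcher-type theorem — in the polyharmonic / fractional setting this is exactly the content of \cite{bocher} as cited in the introduction — yields that $w = h + (\text{singular terms supported at } 0)$, where $h$ is an entire polyharmonic function on $\R^n$ and the singular part is a finite linear combination of the fundamental solution and its derivatives, i.e. of $\ln\frac{1}{|x|}$ (or $|x|^{2\sigma - n}$ type terms) and derivatives thereof. The polynomial growth at infinity forces $h$ to be a polynomial $p$ of degree $\leq n-1$ (an entire function with $(-\Delta)^{n/2} h = 0$ and polynomial growth of order $< n$ is a polynomial of that degree — a Liouville-type statement). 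To handle the singular part at $0$ cleanly, apply the Kelvin transform $x \mapsto x/|x|^2$: this conjugates $(-\Delta)^{n/2}$ (up to a conformal weight) and sends the origin to infinity, so the singular-at-$0$ piece becomes a polyharmonic function with controlled growth at infinity after the transform, which is again a polynomial $q$ of degree $\leq n-1$ in the transformed variable, i.e. $q(x/|x|^2)$; the fundamental-solution term $\ln\frac1{|x|}$ is invariant (up to sign) under Kelvin and contributes the $\beta \ln|x|$ term. Collecting everything: $u = v + w = v + p(x) + q(x/|x|^2) + \beta\ln|x|$.

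The main obstacle I expect is twofold. First, making the Bôcher decomposition rigorous in the \emph{nonlocal} (odd $n$) case: one cannot simply quote the classical Bôcher theorem for harmonic functions, and one must verify the hypotheses of \cite{bocher} (local integrability, the correct distributional meaning of $(-\Delta)^{n/2} w$ across $0$, and that no half-integer-order mismatch produces extra terms). Second, pinning down that the growth at infinity of $w$ is genuinely polynomial of degree $\leq n-1$ and not merely "subexponential": this requires the quantitative bounds on $v$ from the previous subsection (\eqref{f1a}, \eqref{f1}, and \eqref{set2}) fed into a Liouville-type argument for $(-\Delta)^{n/2}$-harmonic functions, and the sharp degree bound $n-1$ (rather than $n$) comes from the finite-volume condition $\Lambda < \infty$ together with the logarithmic leading term of $v$. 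The evenness of the degrees of $p$ and $q$ and their upper-boundedness are \emph{not} part of this lemma — they are extracted later in Theorem \ref{tr2} using the finer estimates including Lemma \ref{lemma v lower bound integral} — so here I would only aim for the raw decomposition with degree bound $n-1$.
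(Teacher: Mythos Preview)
Your overall architecture (set $w=u-v$, observe $(-\Delta)^{n/2}w=0$ on $\R^n\setminus\{0\}$, invoke B\^ocher, invoke Kelvin) matches the paper's. But there is a genuine gap in the step where you bound the degree of the entire polyharmonic part $p$.

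You write that ``combined with the smoothness of $u$ and elliptic/nonlocal estimates one gets that $w$ grows at most polynomially at $\infty$, in fact $w(x)=O(|x|^{n-1+\eps})$''. This is not available. The only global information on $u$ is $\int_{\R^n}e^{nu}\,dx<\infty$, which gives $u^+\in L^1$ but no pointwise upper bound and no lower bound at all; $u$ could in principle be very negative on large sets. The bounds \eqref{f1a}--\eqref{f1} control $v$, not $u$, so they do not yield a pointwise polynomial growth bound on $w=u-v$. Without this, your Liouville step ``polynomial growth at infinity forces $h$ to be a polynomial of degree $\leq n-1$'' has no hypothesis to stand on.

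The paper closes this gap differently, and this is the point you are missing about the role of the Kelvin transform. B\^ocher (Lemma~\ref{lea3}) needs only the one--sided integrability $\int_{B_1}\eta^+\,dx<\infty$, which follows from $u^+\leq e^{nu}\in L^1$ and $(-v)^+\in L^1$ (Lemma~\ref{lea2c}); it then gives, on each $B_r\setminus\{0\}$, $\eta=p+q(x/|x|^2)+\beta\ln|x|$ with $q$ of degree $\leq n-1$ \emph{automatically} (since the singular part is a combination of $D^k\ln(1/|x|)$, $|k|\leq n-1$) and $p$ merely polyharmonic on $\R^n$. At this stage $p$ is not known to be a polynomial. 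The paper now applies the \emph{same} B\^ocher argument to the Kelvin transform $\tilde u$, obtaining $\tilde u=\tilde v+\tilde p+\tilde q(x/|x|^2)+\tilde\beta\ln|x|$ with $\deg\tilde q\leq n-1$. After proving the identity \eqref{aaz3}, one compares the two decompositions (equation \eqref{bubz8}) and finds that $\tilde q$ and $p$ must agree; hence $\limsup_{|x|\to\infty}p(x)/|x|^{n-1}<\infty$, and \emph{only then} the one--sided Liouville theorem (Lemma~\ref{lem6}) gives $\deg p\leq n-1$. In other words, Kelvin is not used to extract $q$ (B\^ocher already does that) but to swap $0$ and $\infty$ so that $p$ becomes a ``$\tilde q$--term'' for $\tilde u$, whose degree B\^ocher bounds for free. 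Your proposal uses Kelvin on the wrong piece.

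A secondary point: for odd $n$ the paper does not attempt a nonlocal B\^ocher theorem. It simply observes $(-\Delta)^{(n+1)/2}\eta=0$ (an integer--order equation) and applies the classical polyharmonic B\^ocher theorem of \cite{bocher}; the degree bound $n-1$ then still comes out of the Kelvin comparison. Your worry about a ``half--integer--order mismatch'' is thus resolved by raising the order by one, not by a fractional B\^ocher statement.
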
	

Before proving  Lemma \ref{lexm2}, we would like to highlight some helpful tools. By the conformal invariance of \eqref{az2}, the inversion (or Kelvin transform) of $u$, defined by
\begin{equation}\label{btt8}
	\tilde{u}(x):=u\left(\frac{x}{|x|^2}\right)-2\ln|x|,
\end{equation}	
satisfies \eqref{az2} with volume $\tilde{\Lambda} =
\int_{\mathbb{R}^n}e^{n\tilde{u}}dx=\Lambda.$ We also define 
\begin{equation}\label{aaz2}
	\tilde{v}(x):=-\frac{1}{\gamma_{n}}\int_{\mathbb{R}^{n}}\ln\left(\frac{1+|y|}{|x-y|}\right)
	e^{n \tilde{u}(y)}dy.
\end{equation}	
As we shall see in the proof of Lemma \ref{lexm2}, we can in fact express
\begin{equation}\label{aaz3}
	\tilde{v}(x)=v\left(\frac{x}{|x|^2}\right)+\frac{\Lambda}{\gamma_{n}}\ln|x|.
\end{equation}

Our proof of Lemma \ref{lexm2} relies on the following generalized Bôcher theorem, which says that polyharmonic functions in the punctured unit ball can be  written as the sum of partial  derivatives  of the fundamental  solution and  a harmonic function near the origin.
We refer the reader to \cite[page 60]{bocher}, for more details about it.

\begin{lemma} \label{lea3} (Bôcher's theorem)\;
	Let $n$ be even and $\eta\in H^{\frac{n}{2}}(B_1\backslash \{0\})$ be a solution of	$(-\Delta)^{n/2}\eta=0$ on $B_1\backslash \{0\}\subset \mathbb{R}^{n}.$ 	If $\eta$ satisfies
	$$\int_{B_1} \eta(x)^+dx<\infty,$$
	where $\eta(x)^+=\max\{\eta(x), 0\}$, then $\eta$ takes  the form 
	$$\eta(x)=\sum_{|k|\leq n-1}c(k)D^{k}\ln\left(\frac{1}{|x|}\right) +p(x)\quad \;  \text{on} \quad B_1\backslash \{0\},$$
	where  $k=(k_1,k_2, \cdots,k_n)\in \mathbb{R}^{n}$  is a multi-index, $c(k)$ are constants and $p$ is a smooth solution of $(-\Delta)^{n/2}p=0$ on $B_1.$
\end{lemma}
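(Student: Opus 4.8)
\emph{Strategy.} The plan is to read off the decomposition from the behaviour of the distribution $T:=(-\Delta)^{n/2}\eta$ at the origin. First I would note that, being polyharmonic on $B_1\setminus\{0\}$, $\eta$ is real-analytic there by elliptic regularity, so it has a convergent spherical-harmonic expansion on each sphere $\partial B_r$, $0<r<1$. Once it is known that $\eta\in L^1_{\mathrm{loc}}(B_1)$ (see below), $T$ is a genuine distribution on $B_1$ which vanishes on $B_1\setminus\{0\}$, hence is supported at the single point $0$ and is therefore a \emph{finite} combination $T=\gamma_n\sum_{|k|\le M}c(k)\,D^k\delta_0$ for some $M\in\mathbb{N}$ and constants $c(k)$. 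Setting $\Phi(x):=\sum_{|k|\le M}c(k)\,D^k\ln(1/|x|)$, we have $(-\Delta)^{n/2}\Phi=T$ on $B_1$, so $p:=\eta-\Phi$ solves $(-\Delta)^{n/2}p=0$ on all of $B_1$; by hypoellipticity of $(-\Delta)^{n/2}$ (a Weyl-type lemma) $p$ is real-analytic on $B_1$, which is the regular summand. Thus everything reduces to two points: \emph{(a)} $\eta\in L^1_{\mathrm{loc}}(B_1)$, and \emph{(b)} the order satisfies $M\le n-1$.

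\emph{Order bound (b).} Here the hypothesis $\int_{B_1}\eta^+\,dx<\infty$ is exactly what forbids large $M$. Suppose $M\ge n$ and let $\Phi_M:=\sum_{|k|=M}c(k)\,D^k\ln(1/|x|)$ be the homogeneous-of-degree-$(-M)$ part of $\Phi$, so that $\eta(x)=\Phi_M(x)+O(|x|^{-(M-1)})+O(1)$ as $x\to0$, using that $p$ is bounded near $0$. If $\Phi_M(\xi_0)>0$ at some $\xi_0\in\mathbb{S}^{n-1}$, then in an open cone $\mathcal C$ about $\xi_0$ one has $\eta(x)\ge c\,|x|^{-M}$ for $x\in\mathcal C$ small, hence $\int_{\mathcal C\cap B_\delta}\eta^+\,dx\ge c'\int_0^\delta r^{\,n-1-M}\,dr=\infty$ since $M\ge n$, contradicting $\eta^+\in L^1$. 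If instead $\Phi_M\le0$ on all of $\mathbb{S}^{n-1}$, I would observe that $\Phi_M$, as the degree-$(-M)$ component of a function polyharmonic on $\mathbb{R}^n\setminus\{0\}$ with $M\ge n$, is a combination of spherical harmonics of degrees $\ell$ with $M-n+2\le\ell\le M$ (from the Euler equation for $(-\Delta)^{n/2}$), all of which are $\ge2$; therefore $\int_{\mathbb{S}^{n-1}}\Phi_M=0$, and with $\Phi_M\le0$ this forces $\Phi_M\equiv0$, contradicting maximality of $M$. Hence $M\le n-1$.

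\emph{Identifying the singular part.} It then remains to check that $\mathrm{span}\{D^k\ln(1/|x|):|k|\le n-1\}$ is precisely the right receptacle for the singular part, so the output of step (b) matches the asserted form. I would verify that for $1\le m\le n-1$ there are no linear relations among $\{D^k\ln(1/|x|)\}_{|k|=m}$: a relation would exhibit $\sum_{|k|=m}c_k\,D^k\ln(1/|x|)$ as a homogeneous distribution of degree $-m>-n$ supported at $0$, hence zero, whence $\sum c_k\,D^k\delta_0=\gamma_n^{-1}(-\Delta)^{n/2}\big(\sum c_k\,D^k\ln(1/|x|)\big)=0$ and all $c_k=0$. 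Comparing the resulting dimension $\binom{m+n-1}{n-1}$ with the telescoping identity $\sum_{\ell\equiv m\,(2),\,0\le\ell\le m}\dim\mathcal H_\ell=\binom{m+n-1}{n-1}$ for dimensions of spherical harmonics shows these derivatives span the full space of degree-$(-m)$ germs of polyharmonic functions on $\mathbb{R}^n\setminus\{0\}$. The only resonance of the Euler equation occurs in the $\ell=0$ mode and produces exactly the logarithm $\ln(1/|x|)=D^0\ln(1/|x|)$, while a possible degree-$0$ constant is absorbed into $p$; this is the stated decomposition.

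\emph{Main obstacle.} The hard part is \emph{(a)}: upgrading $\eta^+\in L^1(B_1)$ to $\eta\in L^1_{\mathrm{loc}}(B_1)$, equivalently excluding an essential singularity at $0$. My plan there is to work directly with the expansion $\eta(r\theta)=\sum_{\ell}\sum_{m}a_{\ell m}(r)Y_{\ell m}(\theta)$ for $0<r<1$: each radial coefficient solves an order-$n$ Euler ODE, hence is a finite combination of the powers $r^{\ell+2j}$ and $r^{\,2j-\ell-n+2}$ ($j=0,\dots,\tfrac n2-1$), plus the single logarithm in the $\ell=0$ mode. On an annulus the regular powers reassemble into $\eta$ minus its singular part, a function with no singularity at $0$, which extends to a real-analytic polyharmonic $p$ on $B_1$; for the singular powers, running the dichotomy of step (b) homogeneity by homogeneity, starting from the most singular degree present, shows every homogeneity $\le-n$ is absent and that the singular part cannot be an infinite series, since otherwise $\eta$ would take, near $0$, positive values destroying the integrability of $\eta^+$. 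This no-essential-singularity step is the classical core of Bôcher's theorem and the one place where the one-sided bound on $\eta$ is genuinely used; once it is in place, the clean reduction above yields the decomposition with $p$ polyharmonic on $B_1$.
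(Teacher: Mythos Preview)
The paper does not prove this lemma; it quotes it from Futamura--Kishi--Mizuta \cite{bocher}. So there is no in-paper proof to compare against, and I assess your proposal on its own.

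Your architecture --- show $\eta\in L^1_{\mathrm{loc}}(B_1)$, identify $(-\Delta)^{n/2}\eta$ as a finite combination of $D^k\delta_0$, subtract the corresponding $D^k\ln(1/|x|)$ to leave a polyharmonic remainder on $B_1$, then bound the order --- is the right one. Your order bound (b) is correct and the sign dichotomy is closed neatly: the observation that for $M\ge n$ a degree-$(-M)$ polyharmonic function carries only spherical harmonics of degree $\ge 2$, hence has zero spherical mean and cannot be $\le 0$ on $\mathbb{S}^{n-1}$ unless it vanishes, is exactly what is needed. The dimension count identifying $\mathrm{span}\{D^k\ln(1/|x|):|k|=m\}$ with the full space of degree-$(-m)$ polyharmonic germs is also correct.

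The gap is in (a), at the phrase ``starting from the most singular degree present.'' This already presupposes that only finitely many spherical-harmonic modes carry singular coefficients. A priori, mode $\ell$ may contribute a term like $\beta_\ell\, r^{-(\ell+n-2)}Y_\ell$, and if infinitely many $\beta_\ell$ are nonzero there is no most-singular homogeneity from which to launch your induction. The sentence ``otherwise $\eta$ would take positive values destroying integrability of $\eta^+$'' is not a proof either: the sign of an infinite oscillating sum is not controlled mode by mode, and cancellations between modes can in principle keep $\eta^+$ integrable while individual modes are arbitrarily singular. Excluding an essential singularity at $0$ from the one-sided hypothesis $\eta^+\in L^1$ is precisely the substantive content of the generalized B\^ocher theorem, and it requires a different device than a degree-by-degree dichotomy (this is what the cited reference supplies). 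You have located the crux accurately but not furnished the mechanism that resolves it.
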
	

Equipped with these tools we can prove the main result of this subsection. 

\proof[Proof of Lemma \ref{lexm2}] 	The following  proof follows some ideas of \cite[Proposition 2.1, steps 1-2]{T-L}, with necessary modifications in our situation.  Observe that $v$ satisfies $(-\Delta)^{n/2}v=-e^{nu}$ on $\mathbb{R}^n$ in the sense of distributions.
In particular, if $n=1$, according to \cite[Lemma 5.6]{dmc},  we know that
$v(x)\in L_{\frac{1}{2}}(\mathbb{R})$ is well defined and
\begin{equation*}\label{tt1}
	(-\Delta)^{1/2}v=-e^{v} \quad \quad \text{in}\quad \quad \mathcal{S}'(\mathbb{R}). 
\end{equation*}
Thus the difference $\eta:=u-v$ solves $(-\Delta)^{n/2}\eta=0$ on $\mathbb{R}^n\backslash \{0\}.$  By Lemma \ref{lea2c}, we moreover have $(-v)^+\in L^1(\mathbb{R}^n)$, and thereby
\begin{equation}\label{f6}
	\begin{split}
		\int_{B_1}\eta(x)^+dx &\leq \int_{B_1}u(x)^+dx + \int_{B_1}(-v)(x)^+dx\\
		&\leq \int_{B_1} e^{nu(x)}dx+C\\
		&\leq C.
	\end{split}
\end{equation}
In what follows, we distinguish  two cases to \vspace{0.2cm}  discuss. 

\textbf{\textit{Case (i)}}:\,$n$ is  \vspace{0.1cm} even. 

Let $B_{r_i}\backslash \{0\}$ be punctured balls with $0<r_1<r_2$ for $i=1,2$. Due to  (\ref{f6})  and  Lemma \ref{lea3},  one has 
\begin{equation}\label{fsp2}
	\begin{split}
		\eta(x)&=p_i(x)+\sum_{0\leq |k|\leq n-1}c_i(k)D^{k}\ln\left(\frac{1}{|x|}\right)\\
		&=p_i(x)+c_{0,i} \ln(|x|)+\sum_{1 \leq  |k|\leq n-1}c_i(k)D^{k}\ln\left(\frac{1}{|x|}\right)\\
		&=p_i(x)+c_{0,i} \ln\left(\frac{1}{|x|}\right)+q_i\left(\frac{x}{|x|^2}\right)\quad   \text{on} \quad B_{r_i}\backslash \{0\},
	\end{split}
\end{equation}
where $c_{0,i}\in \mathbb{R}$, $p_i$ solves $(-\Delta)^{n/2}p_i=0$ on $B_{r_i}$ and   $q_i\left(\frac{x}{|x|^2}\right)$ is a polynomial of degree at most  $n-1$. In particular, $q_i$ has no \vspace{0.2cm}  constant term. 

We first claim that $p_1= p_2$ and $q_1= q_2$.  In fact, substituting $x$ by $\frac{x}{|x|^2}$ in (\ref{fsp2}), one finds
\begin{equation}\label{fsp3}
	\begin{split}
		\eta\left(\frac{x}{|x|^2}\right)=p_i\left(\frac{x}{|x|^2}\right)+c_{0,i} \ln(|x|)+q_i(x)\quad   \text{on} \quad  \mathbb{R}^n\backslash B_{\frac{1}{r_i}}.
	\end{split}
\end{equation}
For simplicity, we set 
$$P(x)=p_1(x)-p_2(x),\quad \quad C=c_{0,1}-c_{0,2},\quad \quad  Q(x)=q_1(x)-q_2(x).$$
By virtue  of (\ref{fsp3}), it holds that
\begin{equation}\label{fsp4}
	\begin{split}
		0=P\left(\frac{x}{|x|^2}\right)+C \ln(|x|)+Q(x)\quad   \text{on} \quad  \mathbb{R}^n\backslash B_{\frac{1}{r_i}}.
	\end{split}
\end{equation}
Owing to  $(-\Delta)^{n/2}P(x)=0$ on $B_{r_i}$, this implies that $P\left(\frac{x}{|x|^2}\right)$ is bounded on $\mathbb{R}^n\backslash B_{\frac{1}{r_i}}$.  Note also that $Q(x)$ is a polynomial without constant term. Hence, it follows from (\ref{fsp4}) that $Q(x)\equiv 0$. That is to say, 
\begin{equation*}\label{fsp5}
	\begin{split}
		0=P\left(x\right)+C \ln(|x|)\quad  \text{on} \quad   B_{r_i}\backslash \{0\}.
	\end{split}
\end{equation*}
Since $P(x)$ is bounded on $B_{r_i}$, we get $C=0$ and thereby $P(x)\equiv 0$. Thus, our claim \vspace{0.2cm} is true. 

Then replacing $B_{r_i}$ by consecutively large balls,  we obtain
\begin{equation*}\label{fu2}
	\begin{split}
		\eta(x)=q\left(\frac{x}{|x|^2}\right)+\beta\ln(|x|)+p(x)\quad   \text{on} \;  \mathbb{R}^n\backslash \{0\},
	\end{split}
\end{equation*}
where $\beta\in \mathbb{R}$, $p$ solves $(-\Delta)^{n/2}p=0$ on $\mathbb{R}^n$ and   $q\left(\frac{x}{|x|^2}\right)$ is  a polynomial  of degree at most $n-1$.  Therefore, 
\begin{equation}\label{fux2}
	u(x)=v(x)+\eta(x)=v(x)+p(x)+q\left(\frac{x}{|x|^2}\right)+\beta \ln(|x|) \quad   \text{on} \;  \mathbb{R}^n \backslash \{0\}.
\end{equation}

Next we prove that $p(x)$ is a polynomial  of degree at most $n-1$. To see this, we shall apply the Kelvin transform. Recall that  $\tilde{u}$ given by (\ref{btt8}) is a solution to (\ref{az2}). From (\ref{fux2}),  there exist some $\tilde{\beta}\in\mathbb{R}$ and a polynomial  $\tilde{q}$ of degree at most $n-1$ so that
\begin{equation}\label{fux3}
	\tilde{u}(x)=\tilde{v}(x)+\tilde{p}(x)+\tilde{q}\left(\frac{x}{|x|^2}\right)+\tilde{\beta }\ln(|x|) \quad   \text{on} \;  \mathbb{R}^n \backslash \{0\},
\end{equation}
where $(-\Delta)^{n/2}\tilde{p}=0$ on $\mathbb{R}^n$ and $\tilde{v}(x)$ is defined  as in (\ref{aaz2}). 
We infer that (\ref{aaz3}) is true. Indeed,   due to (\ref{btt8})  and (\ref{fux2}), 
\begin{equation}\label{bfs8}
	\tilde{u}(x)=v\left(\frac{x}{|x|^2}\right)+p\left(\frac{x}{|x|^2}\right)
	+q\left(x\right)-(2+\beta)\ln(|x|).
\end{equation}
Denote the term on the right hand of (\ref{aaz3}) by $$v_{\tilde{u}}(x):=v\left(\frac{x}{|x|^2}\right)+\frac{\Lambda}{\gamma_n}\ln|x|.$$
From (\ref{aaz1}) and (\ref{fux2}), it follows that
\begin{equation}\label{asn8}
	\begin{split}
		v_{\tilde{u}}(x)&=-\frac{1}{\gamma_n}\int_{\mathbb{R}^n}\ln \left(\frac{1+|y|}{|x|\left|\frac{x}{|x|^2}-y\right|}\right)e^{nu(y)}dy\\
		&=-\frac{1}{\gamma_n}\int_{\mathbb{R}^n}\ln \left(\frac{1+|y|}{|x|\left|\frac{x}{|x|^2}-y\right|}\right)|y|^{n\beta}e^{n\left(v(y)+p(y)+q\left(\frac{y}{|y|^2}\right)\right)}dy.
	\end{split}
\end{equation}	
With a change of variables, and using  $|x||y|\left|\frac{x}{|x|^2}-\frac{y}{|y|^2}\right|=|x-y|$, we deduce  from  (\ref{asn8})  that
\begin{equation}\label{awc8}
	\begin{split}
		v_{\tilde{u}}(x)&=-\frac{1}{\gamma_n}\int_{\mathbb{R}^n}\ln \left(\frac{1+|y|}{|x||y|\left|\frac{x}{|x|^2}-\frac{y}{|y|^2}\right|}\right)
		\frac{e^{n\left(v\left(\frac{y}{|y|^2}\right)+p\left(\frac{y}{|y|^2}\right)+q(y)\right)}}{|y|^{n\left(2+\beta\right)}}dy\\
		&=-\frac{1}{\gamma_n}\int_{\mathbb{R}^n}\ln \left(\frac{1+|y|}{|x-y|}\right)
		\frac{e^{n\left(v\left(\frac{y}{|y|^2}\right)+p\left(\frac{y}{|y|^2}\right)+q(y)\right)}}{|y|^{n\left(2+\beta\right)}}dy.
	\end{split}
\end{equation}	
Then inserting  (\ref{bfs8}) into (\ref{awc8})  leads to
\begin{equation*}\label{aws9}
	\begin{split}
		v_{\tilde{u}}(x)=-\frac{1}{\gamma_n}\int_{\mathbb{R}^n}\ln \left(\frac{1+|y|}{|x-y|}\right)
		e^{n\tilde{u}(y)}dy.
	\end{split}
\end{equation*}	
Consequently,  we obtain $\tilde{v}(x)=v_{\tilde{u}}(x)$ and then  (\ref{aaz3}) is \vspace{0.2cm} confirmed.

Next, employing  (\ref{aaz3}) and (\ref{fux3}), we derive
\begin{equation}\label{bes8}
	\tilde{u}(x)=v\left(\frac{x}{|x|^2}\right)+\tilde{p}(x)+\tilde{q}\left(\frac{x}{|x|^2}\right)+\left(\tilde{\beta}+\frac{\Lambda}{\gamma_n}\right)\ln(|x|).
\end{equation}	
Putting   (\ref{bfs8}) and (\ref{bes8})   together gives
\begin{equation}\label{bubz8}
	\tilde{p}(x)+\tilde{q}\left(\frac{x}{|x|^2}\right)+\left(\tilde{\beta}+\frac{\Lambda}{\gamma_n}\right)\ln(|x|)=p\left(\frac{x}{|x|^2}\right)
	+q\left(x\right)-(2+\beta)\ln(|x|).
\end{equation}	
where $q$ and $\tilde{q}$ are polynomials of degree at most $n-1$. Particularly, from  (\ref{bubz8}),
\begin{equation}\label{baa9}
	\tilde{u}(x)=\tilde{v}(x)+p\left(\frac{x}{|x|^2}\right)+q(x)
	-\left(\frac{\Lambda}{\gamma_n}+\beta+2\right)\ln(|x|).
\end{equation}		
and $\limsup_{|x|\rightarrow 0}|x|^{n-1}\tilde{q}\left(\frac{x}{|x|^2}\right)<\infty.$ This together with (\ref{bubz8}) again implies 
$$\limsup_{|x|\rightarrow 0}|x|^{n-1}p\left(\frac{x}{|x|^2}\right)=\limsup_{|x|\rightarrow \infty}\frac{p(x)}{|x|^{n-1}}<\infty.$$
It follows that  $p(x)\leq 1+|x|^{n-1}$ on $\mathbb{R}^n$. Since $(-\Delta)^{n/2}p=0$ on $\mathbb{R}^n$, by Lemma \ref{lem6}, we know that $p$ is a polynomial of degree at most $n-1$. Then one  can deduce from (\ref{bubz8}) that $\tilde{q}=p$ and $\tilde{p}=q$.  Therefore (\ref{fs6}) \vspace{0.2cm} is established. 

\textbf{\textit{Case (ii)}}:\,$n$ is \vspace{0.1cm} odd. 

In this case, one can see $(-\Delta)^{(n+1)/2}\eta=0$ on $\mathbb{R}^n\backslash \{0\}$. 
 Using this, together with  (\ref{f6}) and Lemma \ref{lea3},  we repeat the same arguments as above and then obtain (\ref{fs6}). Moreover,  $p, q$ are polynomials of degree at  most $n-1$. This finishes the proof of the lemma. $\hfill\Box$\\


\section{Proof of Theorems \ref{tr2} and \ref{th2x}}

We start by proving that the polynomials $p$ and $q$ found in Lemma \ref{lexm2} are necessarily upper-bounded. Notice that, contrary to the analogous proof in the positive curvature case, here we will need to use the averaged bound from Lemma \ref{lemma v lower bound integral}. 

\begin{lemma} \label{lea42} 
	For $n\geq 1$, there exists some constant $C$ such that 
	\begin{equation}\label{fg7}
		p(x),\; q(x)\leq C.
	\end{equation}
In particular, $p$ and $q$ must have even degree. If $n$ is even, then $\deg(p), \deg(q) \leq n-2$. 
\end{lemma}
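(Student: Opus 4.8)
The plan is to derive the upper bounds on $p$ and $q$ by combining the decomposition $u = v + p(x) + q(x/|x|^2) + \beta \ln|x|$ from Lemma \ref{lexm2}, the upper bound $v(x) \le \frac{\Lambda}{\gamma_n}\ln|x|$ for $|x|\ge 1$ from Lemma \ref{lea1c}, the averaged lower bound from Lemma \ref{lemma v lower bound integral}, and the finiteness of the volume $\Lambda = \int_{\mathbb{R}^n} e^{nu}\,dx < \infty$. First I would focus on $p$; the bound on $q$ follows by applying the result to the Kelvin transform $\tilde u$, using \eqref{baa9} (which exchanges the roles of $p$ and $q$ up to constants and a logarithmic term), so it suffices to treat $p$.

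The key computation: fix a large radius $|x|$ and integrate $e^{nu}$ over the unit ball $B_1(x)$ (or a ball of some fixed small radius). On such a ball, $q(x/|x|^2)$ and $\beta\ln|x|$ are essentially constant (up to lower-order terms, since $x/|x|^2 \to 0$ and the increments are controlled), so that $e^{nu(z)} \ge c\, |x|^{n\beta} e^{n q(x/|x|^2) + o(1)} e^{nv(z)}$ for $z \in B_1(x)$, modulo being careful about how $q$ varies across $B_1(x)$ — this is where one should really work on a ball of radius $\rho \to 0$ or invoke a mean-value-type control. Applying Lemma \ref{lemma v lower bound integral} with suitable $q$ (the exponent) and small $\eps_1,\eps_2$ gives
\[
\int_{B_\rho(x)} e^{nu(z)}\,dz \;\ge\; c\, \rho^{n+\eps_2}\, |x|^{n\beta}\, e^{n q(x/|x|^2) + o(1)}\, |x|^{\left(\frac{\Lambda}{\gamma_n}-\eps_1\right)n}.
\]
Since the left-hand side is bounded by $\Lambda < \infty$ (even as $|x|\to\infty$), the right-hand side must stay bounded, which forces $q(x/|x|^2)$ to be bounded above as $|x|\to\infty$, i.e. $q$ is bounded above near $0$. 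But $q$ is a polynomial, so being bounded above on a punctured neighborhood of $0$ after the substitution $x \mapsto x/|x|^2$ means $q$ is bounded above on all of $\mathbb{R}^n$. Running the same argument with $x$ near $0$ instead (using the second limit in \eqref{aac2x} and the symmetric averaged bound for small $|x|$, or simply the Kelvin-transformed equation) yields the upper bound for $p$. Once $p, q \le C$ on $\mathbb{R}^n$, the standard fact that a polynomial bounded from above must have even degree (its top-degree part is a nonpositive form, forcing even homogeneity degree) gives the parity statement; and when $n$ is even, Lemma \ref{lea3}/Bôcher forces $\deg p, \deg q \le n-1$, so evenness upgrades this to $\le n-2$.

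The main obstacle I expect is the passage from the pointwise lower bound on $v$ (which carries the troublesome term $\int_{B_1(z)} \ln\frac{1}{|z-y|} e^{nu(y)}\,dy$) to a usable estimate: this is precisely why the \emph{averaged} Lemma \ref{lemma v lower bound integral} is indispensable rather than \eqref{f1} directly, and why one must integrate $e^{nu}$ over a ball around $x$ instead of evaluating at a point. A secondary technical point is controlling the oscillation of $q(z/|z|^2)$ and $\beta\ln|z|$ over the ball $B_\rho(x)$ uniformly in $|x|\to\infty$; taking $\rho$ fixed but small, noting that on $B_\rho(x)$ one has $|z/|z|^2 - x/|x|^2| \lesssim \rho/|x|^2 \to 0$ and $\big|\ln|z| - \ln|x|\big| \lesssim \rho/|x| \to 0$, handles this, so these contribute only a uniformly bounded multiplicative factor that can be absorbed into the constants. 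With these two points dealt with, the argument reduces to the contradiction "bounded volume vs.\ a positive power of $|x|$" described above.
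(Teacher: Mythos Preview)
Your proposal has a genuine gap at the central step. You claim the lower bound
\[
e^{nu(z)} \ge c\,|x|^{n\beta}\,e^{\,n q(x/|x|^2) + o(1)}\,e^{nv(z)} \qquad (z\in B_\rho(x)),
\]
but this silently drops the factor $e^{np(z)}$. That factor cannot be absorbed into a constant: you are trying to prove $p$ is bounded above, and in the contradiction scenario $p$ is \emph{not} known to be bounded below on $B_\rho(x)$ (typically $p$ will be very negative in most directions). With $e^{np}$ restored, your inequality gives no information about $q$; and the conclusion you draw from it --- that $q$ is bounded above near $0$ --- is vacuous anyway, since $q$ is a polynomial and hence automatically bounded near the origin. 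So the argument as written neither bounds $p$ nor $q$.

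What the paper does, and what is missing from your sketch, is a mechanism to locate points where $p$ is \emph{large} and to control its oscillation nearby. Concretely: assuming $\sup p=+\infty$, Gorin's theorem yields $s>0$ and $|x_k|\to\infty$ with $p(x_k)\ge c|x_k|^s$; then, using $|\nabla p|\le C|x|^{n-2}$ (since $\deg p\le n-1$), one shrinks the ball to radius $\rho_k\sim|x_k|^{2-n}$ so that $p\ge c|x_k|^s$ on all of $B_{\rho_k}(x_k)$. Only then can $e^{np}$ be pulled out of the integral and combined with Lemma~\ref{lemma v lower bound integral} to produce the divergent factor $e^{\,cn|x_k|^s}$ that overwhelms the polynomial losses (including the $\rho_k^{n+\eps_2}\sim|x_k|^{-(n-2)(n+\eps_2)}$ coming from the shrinking radius). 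Your fixed-$\rho$ argument at generic large $x$, with no control on where $p$ is large, cannot produce this exponential gain and therefore cannot close the contradiction. The Kelvin-transform reduction to $p$ alone and the parity/degree conclusions are fine; the missing piece is precisely the Gorin-plus-gradient-control step.
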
	
\proof We first remark that it suffices to prove upper boundedness of $p$ only. Indeed, as mentioned  before, the inversion $\tilde{u}(x) = u\left(\frac{x}{|x|^2}\right) - 2 \ln |x|$ also solves (\ref{az2}) and has the decomposition (\ref{baa9}). 
In other words, $q$ plays the same role for $\tilde{u}$ as $p$ plays for $u$. Hence repeating the following proof for $\tilde{u}$ will show upper-boundedness of $q$.  \vspace{0.2cm} 

So suppose by contradiction that $\sup_{x \in \R^n} p(x) = + \infty$. By \cite[Theorem 3.1]{gorin}, we can justify as in \cite[Lemma 11]{lma},  that there is $s >0$ and a sequence of points $x_k$ with $|x_k| \to \infty$ such that $p(x_k) \geq c|x_k|^s$. Since $\deg p \leq n-1$, we have $|\nabla p(x)| \leq C |x|^{n-2}$ for all $x \in \R^n$. If we define the radius $\rho_k := \frac{1}{2C} |x|^{2-n}$, we therefore have
\[ |p(x) - p(x_k)| \leq |x-x_k| \sup_{B_{\rho_k}(x_k)} |\nabla p| \leq 1 \qquad \text{ for all } x \in B_{\rho_k}(x_k). \]
In particular, since $|x_k| \to \infty$, this implies that 
\begin{equation} \label{lea43} 
	p(x) \geq 2c |x|^{s} \geq c |x_k|^{s} \qquad \text{ on } B_{\rho_k}(x_k) . 
\end{equation}
Note also that $q\left(\frac{x}{|x|^2}\right)$ is uniformly bounded for $|x| \geq 1$. Using this,  together with (\ref{lea43}) and Lemma \ref{lemma v lower bound integral}, we obtain for $q = n$ and some $\eps_1, \eps_2 > 0$, 
\begin{equation*}\label{faqc7}
	\begin{split}
		\int_{B_{\rho_k}(x_k)} e^{nu(y)} dy &= \int_{B_{\rho_k}(x_k)} e^{nv(y)} e^{np(y)} e^{nq\left(\frac{y}{|y|^2}\right)} |y|^{n \beta} dy \\
		&\geq c e^{n |x_k|^s} |x_k|^{n \beta} \int_{B_{\rho_k}(x_k)} e^{nv(y)} dy \\
		&\geq  c e^{n |x_k|^s} |x_k|^{n \beta} \rho_k^{(n+\eps_2)} |x_k|^{\left(\frac{\Lambda}{\gamma_n} - \eps_1\right)n} \\
		&= c \frac{e^{n |x_k|^s}}{|x_k|^{(n+\eps_2)(n-2) - \left(\frac{\Lambda}{\gamma_n} + \beta - \eps_1\right)n}} \to + \infty \quad \quad \text{as}\quad k \to \infty
	\end{split}	
\end{equation*}
due to $|x_k| \to \infty$. This clearly contradicts the fact that $\int_{\R^n} e^{nu(y)} dy < \infty$. Thus (\ref{fg7}) is proven, and the remaining assertions follow immediately.
$\hfill\Box$\\

With the help of Lemma \ref{lea42}, we can obtain an additional Hölder-type bound on $v$.

\begin{lemma}
	\label{lemma v hölder}
	Suppose that $\frac{\Lambda}{\gamma_n} + \beta > 0$. Then, as $|x| \to \infty$, for every $y \in B_1(x)$, we have 
	\[ |v(x) - v(y)| \leq o(1) \ln |x|, \]
	where $o(1)$ denotes a quantity which goes to zero as $|x| \to \infty$. 
\end{lemma}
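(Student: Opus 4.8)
The plan is to estimate $v(x) - v(y)$ directly from the integral representation \eqref{aaz1}, splitting the domain of integration into the ``near'' region $B_2(x)$ and the ``far'' region $\mathbb{R}^n \setminus B_2(x)$. For $y \in B_1(x)$ we write
\[
v(x) - v(y) = \frac{1}{\gamma_n} \int_{\mathbb{R}^n} \ln \frac{|x-z|}{|y-z|} \, e^{nu(z)} \, dz,
\]
and we must show the absolute value of this is $o(1)\ln|x|$. On the far region $\mathbb{R}^n \setminus B_2(x)$, since $|x-z|$ and $|y-z|$ are comparable (their ratio lies in a fixed compact subset of $(0,\infty)$ when $|y-x|<1$ and $|x-z|\geq 2$), the logarithm is bounded, so this contribution is $O(\Lambda) = O(1)$, which is certainly $o(1)\ln|x|$ as $|x|\to\infty$. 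This part is routine.

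The real work is the near region $B_2(x)$. Here I would bound
\[
\left| \int_{B_2(x)} \ln \frac{|x-z|}{|y-z|} \, e^{nu(z)} \, dz \right| \leq \int_{B_2(x)} \Big( \big|\ln|x-z|\big| + \big|\ln|y-z|\big| \Big) e^{nu(z)} \, dz,
\]
so it suffices to control $\int_{B_2(x)} |\ln|x-z|| \, e^{nu(z)}\, dz$ (and the analogous term centered at $y$, which is handled identically since $B_2(x) \subset B_3(y)$). The idea is to exploit that $e^{nu}$ is globally integrable, so its mass on balls going off to infinity is small, combined with the fact that the singularity of $\ln$ is only mildly non-integrable. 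Concretely, one decomposes $B_2(x)$ into dyadic annuli $\{2^{-j-1} \leq |x-z| < 2^{-j}\}$ for $j \geq -1$; on each the logarithm is of size $j$ (up to constants), and summing $\sum_j j \int_{A_j} e^{nu}$, one wants to show the total is $o(\ln|x|)$.

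The main obstacle — and the reason the hypothesis $\frac{\Lambda}{\gamma_n} + \beta > 0$ enters — is that a crude bound $\int_{B_2(x)} |\ln|x-z|| \, e^{nu}\,dz \leq \|\ln\|_{L^{p'}(B_2)} \|e^{nu}\|_{L^p(B_2(x))}$ needs an $L^p$ bound on $e^{nu}$ with $p>1$ over unit balls near infinity, which in turn requires knowing $u$ (equivalently $v$, since $p+q(\cdot/|\cdot|^2) + \beta\ln$ is controlled on $B_2(x)$ by Lemma \ref{lea42} and boundedness of $q$ at infinity) does not blow up too fast; this is exactly where one feeds in the lower bound \eqref{f1} together with the sign condition $\frac{\Lambda}{\gamma_n}+\beta>0$ guaranteeing $v(z) + \beta\ln|z|$ is bounded below by a positive power of $|z|$ times something, so that the \emph{upper} bound \eqref{f1a} on $v$ plus $p,q \leq C$ forces $e^{nu}$ to have integrable tails with a quantitative rate. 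I would combine the upper bound $u(z) \leq v(z) + C \leq \frac{\Lambda}{\gamma_n}\ln|z| + C$ on $B_2(x)$ (valid once $|x|$ is large, using $|x|\geq 1$) with the global bound $\int_{\mathbb{R}^n} e^{nu} < \infty$: writing $\delta_k := \int_{B_2(x_k)} e^{nu}$ for any sequence $|x_k|\to\infty$, we have $\delta_k \to 0$, and then by Hölder (or by the dyadic sum, bounding the annular masses by $\delta_k$ and using the $L^\infty$ bound $e^{nu}\leq C|z|^{\Lambda/\gamma_n}$ to handle the innermost annuli) one gets $\int_{B_2(x_k)} |\ln|x_k - z||\, e^{nu} \leq C \delta_k^{1/p'}\, |x_k|^{\Lambda/(p\gamma_n)}$ or similar. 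One must choose $p$ close to $1$ and track exponents so the power of $|x_k|$ appearing, divided by $\ln|x_k|$, tends to $0$; since $\delta_k \to 0$ this is where care is needed, and quantifying $\delta_k$ against a power of $|x_k|$ via \eqref{f1a} is the crux. I expect the cleanest route is: fix small $\varepsilon$, use $e^{nu(z)} \leq C|z|^\Lambda{}^{/\gamma_n}$ pointwise on $B_2(x_k)$ to absorb the log singularity on the tiny inner balls, and use integrability plus the bound $|\ln|x_k-z|| \leq C\ln|x_k|$ on the rest of $B_2(x_k)$, giving total $\leq C\ln|x_k| \cdot \delta_k + o(1) = o(1)\ln|x_k|$, which is precisely the claim.
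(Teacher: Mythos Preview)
Your direct near/far splitting of the integral representation is sound and in fact more elementary than the paper's argument. The paper does not estimate $v(x)-v(y)$ directly; instead it establishes a Campanato-type oscillation bound
\[
\sup_{\rho\in(0,4]}\rho^{-n-\frac{1}{\ln|x|}}\int_{B_\rho(x)}\Big|v-\fint_{B_\rho(x)}v\Big|=o(1),
\]
by feeding in the mass estimate $\int_{B_r(x)}e^{nu}\leq Cr^n|x|^{n(\Lambda/\gamma_n+\beta)}$ (obtained from \eqref{f1a}, \eqref{fs6} and Lemma \ref{lea42}) and then invokes the abstract Campanato embedding into H\"older spaces, exactly as in \cite[Lemma~3.6]{hmm}. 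In that machinery the hypothesis $\frac{\Lambda}{\gamma_n}+\beta>0$ is genuinely used: the cited argument needs the power of $|x|$ in the mass bound to be strictly positive.

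By contrast, your ``cleanest route'' does not actually use the hypothesis at all, and your paragraph explaining where it enters (via $L^p$ bounds or via lower bounds on $v+\beta\ln|\cdot|$) is off target---none of that is invoked in your final argument. Two points to tighten: (i) the pointwise bound should read $e^{nu(z)}\leq C|z|^{n(\Lambda/\gamma_n+\beta)}$, not $|z|^{\Lambda/\gamma_n}$; (ii) you must say how small the ``tiny inner ball'' is---take its radius $\rho=|x|^{-s}$ with $s>\max\bigl(\frac{\Lambda}{\gamma_n}+\beta,0\bigr)$, so that the inner piece is $C|x|^{n(\Lambda/\gamma_n+\beta)-ns}\cdot s\ln|x|\to 0$ and on the outer annulus $|\ln|x-z||\leq s\ln|x|$, giving $s\ln|x|\cdot\int_{B_2(x)}e^{nu}=o(\ln|x|)$. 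With these fixes your argument goes through and is shorter; what the paper's Campanato route buys is a marginally sharper H\"older-type statement \eqref{flk5}, though only the $o(\ln|x|)$ consequence is used downstream.
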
 
\proof The proof proceeds exactly like in \cite[Lemma 3.6]{hmm}. For the sake of
completeness, we give a sketch of the proof. First, the oscillation of $v$ is estimated in the form of a Campanato-type quantity, namely,
\begin{equation}\label{flk4}
	\sup_{\rho\in(0,4])}\frac{1}{\rho^{n+\frac{1}{\ln|x|}}}\int_{B_{\rho}(x)}\left|v(y)-\fint_{B_\rho(x)}v(z)dz\right|dy=o(1),
\end{equation}
where $o(1)\rightarrow 0$ as $|x|\rightarrow\infty$. Then, in a second step (which is in fact completely general and independent of the problem), the following Hölder-type bound  can deduced from (\ref{flk4}),
\begin{equation}\label{flk5}
	\sup_{x,y\in B_1(x), x\not=y}\frac{|v(x)-v(y)|}{|x-y|^{\frac{1}{\ln(1+|x|)}}}=o(1)\ln(1+|x|).
\end{equation}

The only slight difference in the proof of (\ref{flk4}) with respect to \cite{hmm} is that the bound on $\int_{B_r(x)} e^{nu(y)} dy$  can be derived in a simpler way in our case.  Indeed, using (\ref{f1a}), (\ref{fs6}) and Lemma \ref{lea42}, we estimate
\begin{align*}
	\int_{B_r(x)} e^{nu(y)} dy  \leq C \int_{B_r(x)} |y|^{n \left( \frac{\Lambda}{\gamma_n} + \beta\right)} dy \leq C r^n  |x|^{n \left( \frac{\Lambda}{\gamma_n} + \beta\right)}. 
\end{align*}
Choosing $r = 2 \sqrt \rho$ and employing \cite[Lemma A.2]{hmm},   one arrives at
\begin{equation}\label{flk6}
		\begin{split}
	\tilde{(I)}&:= \int_{B_\rho(x)} \fint_{B_\rho(x)} \int_{B_{2 \sqrt \rho}(x)} \left| \log\left( \frac{|z-\xi|}{|y - \xi|} \right)\right| e^{nu(\xi)} \, d\xi \, dz \, dy \\
	& \leq C \rho^n \int_{B_{2 \sqrt \rho}(x)} e^{nu(\xi)} \, d \xi \\
	&\leq C \rho^\frac{3n}{2} |x|^{n \left( \frac{\Lambda}{\gamma_n} + \beta\right)},
		\end{split}
\end{equation}
where  $\fint_{B_\rho(x)} := \frac{1}{|B_\rho|} \int_{B_\rho(x)} \sim \rho^{-n} \int_{B_\rho(x)}$. From here on, the proof can be continued as in \cite{hmm} to get both (\ref{flk4}) and (\ref{flk5}). We shall explain why this is available. As a matter of fact, in contrast to (\ref{flk6}), the estimate obtained in \cite{hmm} is 
\begin{equation}\label{flk7}
	 (I) \leq C \rho^{\frac{5n}{4} - \frac{\eps_2}{4}} |x|^{c_1} 
\end{equation}
with $c_1 := \frac{n(\alpha - \beta)}{2} + n \eps_1$. The properties of the exponents in (\ref{flk7})  needed in the following proof are  
\[ \frac{5n}{4} - \frac{\eps_2}{4} > n \quad \quad \text{and}\quad \quad c_1>0.\]
Comparing (\ref{flk6}) with (\ref{flk7}), we notice that the above two properties are fulfilled for our set of exponents, since $\frac{3n}{2} > n$ is clear, and $\frac{\Lambda}{\gamma_n} + \beta > 0$ is precisely our assumption.  Thus the desired result follows without difficulty. $\hfill\Box$\\

Now we are in a position to complete the last remaining part of the proof of Theorem \ref{tr2}. 

\begin{proof}
[Proof of Theorem \ref{tr2}]
In view of Lemmas \ref{lexm2} and \ref{lea42},  it only remains to prove the claimed asymptotic behavior of $v$. Let us begin by showing
	\begin{equation}\label{xse2}
		\lim_{|x|\rightarrow\infty}\frac{v(x)}{\ln(|x|)}=\frac{\Lambda}{\gamma_n}  \quad \text{and} \quad 	\lim_{|x|\rightarrow 0}\frac{v(x)}{\ln(|x|)}=0.
	\end{equation}
	
 With Lemmas \ref{lemma v lower bound integral} and \ref{lemma v hölder} at our disposition, we can proceed similarly to \cite[Theorem 1.1]{hmm},  but with reversed inequality signs. By Kelvin transform and the relation \eqref{aaz3} it suffices to prove $\lim_{|x|\rightarrow\infty}\frac{v(x)}{\ln(|x|)}=\frac{\Lambda}{\gamma_n} $. We split the proof into two cases according to the sign of \vspace{0.2cm}  $\frac{\Lambda}{\gamma_n} + \beta$. 

\textbf{\textit{Case (i)}}:\,\vspace{0.2cm}  $\frac{\Lambda}{\gamma_n} + \beta \leq 0$. 

In this case the proof is more elementary. Indeed, in this case we can estimate the term in \eqref{f1}. Applying \eqref{f1a}, \eqref{fs6} and Lemma \ref{lea42}, we derive
\begin{equation*}
	\begin{split}
	\int_{B_1(x)} \ln \frac{1}{|x-y|} e^{nu(y)} dy &\leq  \int_{B_1(x)} \ln \frac{1}{|x-y|} e^{nv(y)} |y|^{n\beta} dy\\
	& \leq \int_{B_1(x)} \ln \frac{1}{|x-y|} |y|^{n\left(\frac{\Lambda}{\gamma_n} + \beta\right)} dy \\
	&\leq C 
	\end{split}
\end{equation*}
since $\frac{\Lambda}{\gamma_n} + \beta \leq 0$. 
Hence the lemma follows immediately from \eqref{f1a} and \eqref{f1} with arbitrary small $\eps > 0$. \vspace{0.2cm} 

\textbf{\textit{Case (ii)}}:\,\vspace{0.2cm}  $\frac{\Lambda}{\gamma_n} + \beta > 0$. 

This case is less elementary and we shall need to invoke Lemma \ref{lemma v hölder}, which holds precisely in this situation. Assume by contradiction that $\lim_{|x| \to \infty} \frac{v(x)}{\ln |x|} \neq \frac{\Lambda}{\gamma_n}$. In view of the upper bound \eqref{f1a}, there must be $\delta > 0$ and a sequence $(x_k)$ with $|x_k| \to \infty$ such that 
\[ v(x_k) \leq \left(\frac{\Lambda}{\gamma_n} - 2 \delta\right) \ln |x_k|. \]
By Lemma \ref{lemma v hölder}, we obtain that 
\[ v(x) = v(x_k) + o(1) \ln |x_k|  \leq \left(\frac{\Lambda}{\gamma_n} -  \delta\right) \ln |x_k| \qquad \forall x \in B_1(x_k) \]
for all $k$ large enough. 
Integrating over $B_1(x_k)$, we thus get 
\[ \int_{B_1(x_k)} e^{v(y)} dy \leq |B_1| |x_k|^{\frac{\Lambda}{\gamma_n} -  \delta} \, . \]
But on the other hand, Lemma \ref{lemma v lower bound integral} (applied with $\rho = 1$, $q = 1$) asserts that 
\[ \int_{B_1(x_k)} e^{v(y)} dy \geq c |x_k|^{\frac{\Lambda}{\gamma_n} - \eps_1} \]
for all $\eps_1 > 0$. Choosing $\eps_1 < \delta$, we obtain a contradiction because $|x_k| \to \infty$. Thus \eqref{xse2} is \vspace{0.2cm} established. 

To complete the proof of Theorem \ref{tr2}, we are left to  justify
	\begin{equation}\label{awqc}
		\lim_{|x|\rightarrow\infty}D^kv(x)=0 \qquad \text{ for\; all } \quad k \in \N^n \quad \text{ with }\quad  0 < |k| \leq n-1
	\end{equation}
under the assumptions either (i) $\beta\leq-1$,  $q(x)\rightarrow-\infty$ as $ |x|\rightarrow\infty,$ or that (ii) $\beta>-1$,  $p(x)\rightarrow-\infty$ as $ |x|\rightarrow\infty$. \vspace{0.2cm} 

If  $\beta\leq -1$, we use the inversion of $u$ and (\ref{baa9}) to get 
\begin{equation}\label{vbt3}
	\begin{split}
		|D^k\tilde{v}(x)|\leq C\int_{\mathbb{R}^n}\frac{e^{n\tilde{u}(y)}}{|x-y|^k}dy \leq C\int_{\mathbb{R}^n}\frac{e^{n\left(\tilde{v}(y)+p\left(\frac{y}{|y|^2}\right)
				+q(y)\right)}}{|x-y|^k|y|^{n\left(2+\beta+\frac{\Lambda}{\gamma_{n}}\right)}}dy. 
	\end{split}
\end{equation}
Since $q(x)\rightarrow-\infty$ as $ |x|\rightarrow\infty,$ it is not hard to check that for some $\delta>0$,
\begin{equation}\label{vba3}
	\begin{split}
		\int_{\mathbb{R}^n\backslash B_{\delta}(x)}\frac{e^{n\left(\tilde{v}(y)+p\left(\frac{y}{|y|^2}\right)
				+q(y)\right)}}{|x-y|^k|y|^{n\left(2+\beta+\frac{\Lambda}{\gamma_{n}}\right)}}dy\rightarrow 0\quad \quad \text{as}\quad |x|\rightarrow\infty.	
	\end{split}
\end{equation}
On the other hand, since  $\tilde{u}(x)\leq C$  for $|x|$ large,  take $\delta=\delta(\epsilon)$ sufficiently small, then
\begin{equation}\label{vba4}
	\begin{split}
		\int_{B_{\delta}(x)}\frac{e^{n\tilde{u}(y)}}{|x-y|^k}dy\leq C	\int_{B_{\delta}(x)}\frac{1}{|x-y|^k}dy\leq \epsilon.
	\end{split}
\end{equation}
Together with (\ref{vbt3})-(\ref{vba4}),  one gets the desired result. 
 If $\beta>-1$, we infer that
 \begin{equation}\label{vbt2}
 	\begin{split}
 		|D^kv(x)|\leq C\int_{\mathbb{R}^n}\frac{e^{nu(y)}}{|x-y|^k}dy \leq C\int_{\mathbb{R}^n}\frac{|y|^{n\beta }e^{n\left(v(y)+p(y)+q\left(\frac{y}{|y|^2}\right)\right)}}{|x-y|^k}dy.
 	\end{split}
 \end{equation}
By arguing as above,  (\ref{awqc}) follows from (\ref{vbt2}) immediately. Thus the proof of  Theorem \ref{tr2} is complete.
\end{proof}

From Theorem \ref{tr2}, we can now easily deduce the claimed non-existence result for dimensions $n=1,2$. 

\begin{proof}
[Proof of Theorem \ref{th2x}]
We prove more than what is claimed, namely that in any dimension $n\geq 1$, the polynomials $p$ and $q$ cannot be constant at the same time. When $n = 1,2$, due to the degree bound from Theorem \ref{tr2}, we know that   $p$ and $q$ must be constant simultaneously. Because of this,  we conclude in particular that for $n=1,2$ no solution to \eqref{az2} can exist. \vspace{0.2cm} 

Indeed, if $p$ and $q$ are constant, by Theorem \ref{tr2} we have
\begin{align*}
\infty &> \int_{\R^n} e^{nu} dx \geq c \left(  \int_{\{ |x| \geq 1\}} |x|^{n\beta} e^{nv} dx +  \int_{\{ |x| < 1\}} |x|^{n\beta} e^{nv} dx \right)  \\
&=  c \left(  \int_{\{ |x| \geq 1\}} |x|^{n\left(\beta + \frac{\Lambda}{\gamma_n}\right) + o(1)} dx +  \int_{\{ |x| < 1\}} |x|^{n\beta +o(1)}dx \right)
\end{align*}
for some $c>0$ and $\beta \in \R$ (where $o(1)$ denotes a quantity that goes to zero as $|x| \to \infty$). Now the finiteness of the first integral implies $\beta \leq - 1 - \frac{\Lambda}{\gamma_n}$, while the finiteness of the second term gives $\beta \geq -1$. Since $\Lambda > 0$, this yields a contradiction. Hence $p$ and $q$ cannot be constant at the same time.  
\end{proof} \vspace{0.2cm} 

\noindent\textbf{Remark.} To conclude this section, we provide an alternative approach to prove upper-boundedness of $p$ and $q$, which does not rely on the sophisticated bounds on $v$ given in Lemma \ref{lemma v lower bound integral}. Beisdes, this shall be useful to the proof in our next theorem. Letting $n\in\{3,4\}$, we assert that
	\begin{equation}\label{f7}
		p(x),\; q(x)\leq C.
	\end{equation}
In fact, to prove $p(x)\leq C$, we let 
\begin{equation*}\label{fed6}
	p(x)=\sum_{i=1}^na_i(x_i-x_i^0)^2+c_0\quad\quad\text{for}\;\;n=3,4,
\end{equation*}
where $a_i, x_i^0$ and $c_0$ are constants.	We assume by contradiction that there is $a_{i_0}>0$ for some $1\leq i_0\leq n$.
Then  on the set 
\begin{equation}\label{fe6}
	S:=\{x\in \mathbb{R}^n:\, |x_i|\leq 1 \;\text{for}\; i \neq i_0 \},
\end{equation}
applying the Young inequality, we infer that
\begin{equation}\label{fc6}
	\begin{split}
		p(x)&\geq a_{i_0} (x_{i_0}-x_{i_0}^0)^2+C\\
		&\geq a_{i_0} x_{i_0}^2-2a_{i_0} x_{i_0}x_{i_0}^0+\tilde{C}\\
		&\geq (a_{i_0}-\epsilon) x_{i_0}^2-C_\epsilon{x_{i_0}^0}^2+\tilde{C}\\
		&\geq C_0|x|^2-\tilde{C}_0,
	\end{split}
\end{equation}
where  $\epsilon$ is chosen	small enough and the constant $C_0, \tilde{C}_0>0$. Moreover, one has  $|S|=\infty$.  On the other hand, observe that $q(x)$ is bounded in $B_1(0)$. By the inversion, we know that $q\left(\frac{x}{|x|^2}\right)$ is bounded in $\mathbb{R}^n\backslash B_1(0)$. From  (\ref{set2}) and (\ref{fc6}),  it then follows  that for $\beta\in \mathbb{R}$,
\begin{equation}\label{fa7}
	\begin{split}
		\int_{\mathbb{R}^n}e^{nu}dx&\geq\int_{S\backslash (S_0\cup B_1)}e^{n\left(v(x)+p(x)+q\left(\frac{x}{|x|^2}\right)+\beta \ln(|x|)\right)}dx\\
		&\geq C\int_{S\backslash (S_0\cup B_1)}|x|^{n\beta}e^{C_0 n |x|^2}dx\\
		&=\infty,
	\end{split}
\end{equation}
contradicting (\ref{az2}). Thus $a_i\leq 0$ for every $1\leq i\leq n$ and $p(x)\leq C$. \vspace{0.2cm} 

To prove $q(x)\leq C$, we shall make use of the  inversion again. Recalling (\ref{baa9}), 
since $(-\tilde{v})^+\in L^1(\mathbb{R}^n),$ one finds $\tilde{v}(x)\geq -C$ in $\mathbb{R}^n\backslash \tilde{S}_0$ on a finite measure set $\tilde{S}_0$. Set 
\begin{equation*}\label{fav7}
	q(x)=\sum_{i=1}^n\tilde{a}_i(x_i-\tilde{x}_i^0)^2+\tilde{c}_0\quad\quad\text{for}\;\;n=3,4.
\end{equation*}
Now proceed exactly as above with $p$ replaced by $q$.  We can  obtain (\ref{f7}) as asserted. Thus the proof is done.   $\hfill\Box$\\

\section{Proof of Theorem \ref{th2}}	
In this section,  for $n\in\{3,4\}$, we are going to improve the results in Theorem \ref{tr2}.  As an initial step, we adopt another method to  give the expression of  solutions $u$ of (\ref{az1}), in comparison to  Lemma \ref{lexm2}. 
\begin{lemma}\label{lem2c}
	Let $n\geq 3$, $u$ be a solution of (\ref{az1}) and $v(x)$ be defined as in (\ref{aaz1}). Then $p:= u-v$ is a nonconstant polynomial of even degree. Moreover, if $n$ is even,  $\deg(p)\leq n-2$; if $n$ is odd,  $\deg(p)\leq n-1$. 
\end{lemma}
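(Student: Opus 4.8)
The plan is to mimic the structure of the proof of Lemma \ref{lexm2}, but to exploit the fact that here $u$ solves \eqref{az1} on \emph{all} of $\mathbb{R}^n$, so that the difference $\eta := u - v$ is polyharmonic on the whole space (no puncture at the origin), which removes the need for the full Bôcher decomposition and the Kelvin-transform bookkeeping of Lemma \ref{lexm2}. First I would observe, exactly as in the proof of Lemma \ref{lexm2}, that $v$ satisfies $(-\Delta)^{n/2} v = -e^{nu}$ on $\mathbb{R}^n$ in the distributional sense (invoking \cite[Lemma 5.6]{dmc} in the nonlocal odd case), so that $\eta = u-v$ satisfies $(-\Delta)^{n/2}\eta = 0$ on $\mathbb{R}^n$; when $n$ is odd this upgrades to $(-\Delta)^{(n+1)/2}\eta = 0$ on $\mathbb{R}^n$. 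Then, using Lemma \ref{lea2c} (so $(-v)^+ \in L^1(\mathbb{R}^n)$) together with $\int_{B_1} e^{nu}\,dx < \infty$, I would bound $\int_{B_1}\eta^+\,dx \leq \int_{B_1} u^+ + \int_{B_1}(-v)^+ \leq C$ as in \eqref{f6}; more generally the same estimate holds on any ball $B_R$.

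Next, the key point is a growth bound turning $\eta$ into a polynomial. Since $\eta$ is (poly)harmonic on $\mathbb{R}^n$ with $\int_{B_R}\eta^+ < \infty$ for each $R$, a mean-value / sub-mean-value argument for $|\nabla^{n-1}\eta|$ or a Liouville-type statement (this is where I would cite Lemma \ref{lem6}, the polyharmonic Liouville lemma already used in the excerpt) shows that an $L^1$-upper-bound on $\eta^+$ over balls forces $\eta$ to be a polynomial. To control its degree I would use the finite-volume condition $\Lambda < \infty$: if $\deg \eta = d$ were too large, then $e^{nu} = |x|^{0}e^{n(v+\eta)} \geq c\, e^{n\eta}$ on a suitable cone or slab where the top-degree part of $\eta$ is positive and $v$ is bounded below (using \eqref{set2}), and $\int e^{n\eta}$ over such a region diverges, as in \eqref{fa7}. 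This both forces $\deg(\eta)$ even and, in the even case, gives $\deg(\eta)\leq n-2$ because the fundamental solution $\ln(1/|x|)$ — hence $v$ — already carries the borderline logarithmic growth $\tfrac{\Lambda}{\gamma_n}\ln|x|$, so a polynomial of degree $n-1$ (odd, hence unbounded above in some direction even after subtracting lower order terms) would be incompatible with $\Lambda<\infty$; an $o(\ln|x|)$/degree comparison with \eqref{f1a} pins it down. For nonconstancy: if $\eta \equiv c$ were constant, then $u = v + c$ and $\int e^{nu} = e^{nc}\int e^{nv}$, but \eqref{f1a} gives $e^{nv} \leq |x|^{n\Lambda/\gamma_n}$ is too small near infinity only if $\Lambda/\gamma_n < -1$, impossible since $\Lambda > 0$ — more precisely, combining the lower bound $v(x) \geq (\tfrac{\Lambda}{\gamma_n}-\epsilon)\ln|x| + O(1)$ from Lemma \ref{lemma v lower bound integral} with $\Lambda>0$ makes $\int_{|x|\geq 1} e^{nv}\,dx = \infty$, contradicting $\Lambda<\infty$; hence $\eta$ is nonconstant.

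The main obstacle I expect is the sharp degree bound $\deg(p) \leq n-2$ in the even case (and $\leq n-1$ in the odd case) together with the parity statement — establishing that $\eta$ is a polynomial is comparatively routine via the polyharmonic Liouville lemma, but extracting the exact degree requires carefully exploiting $\Lambda<\infty$ against the known growth $v \sim \tfrac{\Lambda}{\gamma_n}\ln|x|$ of the regular part, and in the odd nonlocal case one must additionally check that the distributional identities and the Liouville lemma are applicable in the fractional setting (this is exactly the kind of nonlocal bookkeeping flagged in the introduction). A secondary, more technical point is justifying the mean-value inequality for $\eta^+$ for polyharmonic (not merely harmonic) functions when $n$ is odd, where one works with $(-\Delta)^{(n+1)/2}\eta = 0$; I would handle this by the same device as in Lemma \ref{lexm2}, reducing to the polyharmonic case of one higher order.
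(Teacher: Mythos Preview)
Your overall strategy is right --- set $\eta = u - v$, show it is (poly)harmonic on all of $\mathbb{R}^n$, upgrade to integer order in the odd case, then argue it is a polynomial of controlled degree --- and your nonconstancy argument is essentially the paper's (though the paper uses only \eqref{set2}: $v \geq -C$ off a finite-measure set $S_0$, so $\int_{\mathbb{R}^n\setminus S_0} e^{nv} = \infty$, with no need for Lemma~\ref{lemma v lower bound integral}).

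The genuine gap is in the step ``$\eta$ polyharmonic $+$ $\int \eta^+ < \infty$ $\Rightarrow$ $\eta$ polynomial of the claimed degree.'' You propose to invoke Lemma~\ref{lem6}, but that lemma requires a \emph{pointwise} bound $\eta(x) \leq C(1+|x|^\ell)$, not an $L^1$ bound on $\eta^+$; having $\int_{B_R} \eta^+ \leq C$ uniformly in $R$ does not by itself yield such a pointwise estimate for a general polyharmonic function. The paper instead follows the Pizzetti--type argument of Martinazzi \cite[Theorem~9]{lma} (and \cite[Lemma~12]{lm2} in the even case): for an $m$-polyharmonic function one has $\fint_{\partial B_R(x)} \eta = \sum_{k=0}^{m-1} c_k R^{2k}\Delta^k\eta(x)$, and the uniform $L^1$ control on $\eta^+$ (coming from Lemma~\ref{lea2c}) forces the top Laplacian powers to vanish, giving directly that $\eta$ is a polynomial of degree at most $2m-2$. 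This is what produces $\deg p \leq n-2$ when $n$ is even ($m=n/2$) and $\deg p \leq n-1$ when $n$ is odd (after the upgrade to $m=(n+1)/2$ via Lemma~\ref{nn3}). Your alternative plan --- deduce the degree bound a posteriori from the finite-volume condition --- presupposes that $\eta$ is already known to be a polynomial, so it cannot substitute for this step; and the parity of the degree then follows from upper-boundedness (as in Lemma~\ref{lea42}), not from a growth comparison with $v$.
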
	
\proof Since the even case $n\geq 4$ has already been addressed in \cite[Lemma 12]{lm2}, we only discuss the odd case $n\geq 3$  here.  To verify this assertion, we can follow the arguments in \cite[Lemma 2.4]{AHY}, which is based on Fourier transform. We can also prove it in a more direct way. Since $(-\Delta)^{n/2}p=0$,  thanks to Lemma \ref{nn3}, one gets   $(-\Delta)^{(n-1)/2}p$ is a constant. Thus $(-\Delta)^{(n+1)/2}p\equiv0$. Then as in \cite[Theorem 9]{lma}, using  Lemma \ref{lea2c}, one can check that $|D^{n-1}p(x)|\rightarrow 0$ as $R\rightarrow \infty$ for $B_R\subset \mathbb{R}^n$.  Thus by (\ref{az1}), we know that $p$ is a polynomial of even degree at most $n-1$. 
We now claim that $p$ is not constant. In fact, due to (\ref{set2}), if $p$ is constant, we derive 
$$\int_{\mathbb{R}^n}e^{nu} dx=\int_{\mathbb{R}^n}e^{n(v(x)+p(x))} dx \geq C\int_{\mathbb{R}^n\backslash S_0}e^{nv(x)}dx=\infty,$$
contradicting (\ref{az1}).  This terminates the proof of the lemma. $\hfill\Box$\\

Relying on Lemma \ref{lem2c}, we can further obtain
\begin{lemma} \label{leas4c} 
	For $n\in\{3,4\}$, there exist constants $c_1\geq 0$ and $c_2>0$ such that 
	\begin{equation}\label{fma7c}
		p(x)\leq c_1-c_2 |x|^2.
	\end{equation}	
\end{lemma}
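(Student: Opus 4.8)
\textbf{Proof proposal for Lemma \ref{leas4c}.}

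The plan is to combine the qualitative information from Lemma \ref{lem2c} (that $p$ is a nonconstant polynomial of even degree at most $n-1$, hence quadratic when $n\in\{3,4\}$) with the integrability constraint $\int_{\mathbb{R}^n} e^{nu}\,dx < \infty$, exactly in the spirit of the Remark at the end of Section 3 and the estimate \eqref{fa7}. Writing $p$ in diagonalized form $p(x) = \sum_{i=1}^n a_i (x_i - x_i^0)^2 + c_0$, the Remark already shows that each $a_i \leq 0$. So the content of \eqref{fma7c} is the strict negativity $c_2 > 0$, i.e.\ that \emph{not all} $a_i$ can vanish; equivalently, $p$ is not bounded (if all $a_i = 0$ then $p \equiv c_0$ is constant, contradicting Lemma \ref{lem2c}). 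Thus the bulk of the work is to rule out the degenerate situation where, say, $a_1 < 0$ but $a_2 = \dots = a_n = 0$, in which $p(x) \to -\infty$ only along the $x_1$-direction and $p$ is bounded (even constant) on each hyperplane $\{x_1 = \text{const}\}$.

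First I would record, using Lemma \ref{lea42} applied to the inversion $\tilde{u}$, that $p = u - v$ is upper-bounded (this is already \eqref{f7}), so after a rotation $p(x) = -\sum_{i=1}^m a_i (x_i - x_i^0)^2 + c_0$ with $a_i > 0$ for $1 \leq i \leq m$ and no dependence on $x_{m+1}, \dots, x_n$, where $1 \leq m \leq n$ since $p$ is nonconstant. If $m = n$ we are done, since then $p(x) \leq c_1 - c_2|x|^2$ with $c_2 = \min_i a_i > 0$ after absorbing the cross terms via Young's inequality as in \eqref{fc6}. So suppose $m \leq n-1$, and set $S := \{x \in \mathbb{R}^n : |x_i| \leq 1 \text{ for } i \leq m\}$, a set of infinite measure on which $p(x) \geq -C$. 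For the regular problem \eqref{az1} we have the decomposition $u = v + p$ with no $\beta$ or $q$ terms, so on $S \setminus (S_0 \cup B_1)$ — where $v(x) \geq -C$ by \eqref{set2} and $|x|$ is large — we get $e^{nu(x)} = e^{n(v(x)+p(x))} \geq c > 0$, hence $\int_{\mathbb{R}^n} e^{nu}\,dx \geq \int_{S \setminus (S_0 \cup B_1)} e^{nu}\,dx = \infty$ since $|S \setminus (S_0 \cup B_1)| = \infty$. This contradicts $\int_{\mathbb{R}^n} e^{nu}\,dx < \infty$, so $m = n$ after all. Finally, to reach the stated form, apply Young's inequality to each square: $-a_i(x_i - x_i^0)^2 \leq -(a_i - \epsilon)x_i^2 + C_\epsilon (x_i^0)^2$, choose $\epsilon$ small, and bound $-\sum_i (a_i - \epsilon) x_i^2 \leq -c_2 |x|^2$ with $c_2 := \min_i(a_i - \epsilon) > 0$; the leftover constants go into $c_1$, and $c_1 \geq 0$ is harmless.

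The main obstacle is really the case distinction on the rank $m$ of the quadratic form: the genuinely nontrivial point is that a nonconstant polynomial of even degree which is bounded above but only tends to $-\infty$ in some directions would still leave $v + p$ bounded below on an infinite-measure set, violating finite volume. Once one observes that $v \geq -C$ outside a finite-measure set (Lemma \ref{lea2c}, i.e.\ \eqref{set2}) and that $p$ is bounded on the ``slab'' $S$, the contradiction with $\Lambda < \infty$ is immediate — but it is important that we are in the \emph{regular} setting \eqref{az1}, so that there are no competing $\beta \ln|x|$ or $q(x/|x|^2)$ terms that could conceivably decay fast enough to save integrability; here they are simply absent. Note also that we crucially use $n \leq 4$ only through Lemma \ref{lem2c} to know $\deg p \leq n-1 \leq 3$, forcing $p$ quadratic; the argument above would otherwise have to contend with higher-degree polynomials whose level sets are more complicated.
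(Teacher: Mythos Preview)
Your proposal is correct and follows essentially the same approach as the paper, which simply points back to the Remark at the end of Section~3 (the argument leading to \eqref{f7}) and says the details are analogous. You are in fact more explicit than the paper about the key remaining point---ruling out the degenerate case where some but not all eigenvalues $a_i$ vanish---by arguing that $p$ would then be bounded below on an infinite-measure slab, contradicting finite volume via \eqref{set2}; one minor slip is that you need not invoke the inversion $\tilde u$ to get $p\leq C$ here, since in the regular setting \eqref{az1} the Remark (or Lemma~\ref{lea42}) gives this directly.
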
	
\proof  Notice that $p$ must be  quadratic in the case $n=\{3,4\}$. Since the proofs of  (\ref{fma7c})  can be viewed as a special case of   (\ref{f7}),   we omit the details here.   $\hfill\Box$\\

\indent\textit{Completion of the proof of Theorem \ref{th2}.} Let $u$ be a solution of (\ref{az1}).  We shall examine the asymptotic behavior of $u$ near infinity, precisely, 
\begin{lemma} \label{lea8} 
	For $n=3,4$,  there exists some constant $\tilde{c}_0$ such that   for any $0<\tau<1$
	\begin{equation}\label{faas14}
		u(x)=\frac{\Lambda}{\gamma_{n}}\ln(|x|)+p(x)+\tilde{c}_0+O(|x|^{-\tau}) \quad \quad \text{as}\quad |x|\rightarrow \infty.
	\end{equation}
\end{lemma}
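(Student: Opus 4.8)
The goal is to upgrade the qualitative statement $\lim_{|x|\to\infty} v(x)/\ln|x| = \Lambda/\gamma_n$ (which follows from Theorem \ref{tr2}) to the sharp asymptotic expansion \eqref{faas14} with an explicit constant $\tilde c_0$ and a power-rate error term, in the low dimensions $n=3,4$. The natural strategy is to decompose the defining integral \eqref{aaz1} for $v$ according to the distance of $y$ from $x$ and from $0$, and to extract the leading logarithm, the constant, and control the remainder. Concretely, I would write
\begin{align*}
	v(x) + \frac{\Lambda}{\gamma_n}\ln\frac{1}{|x|} &= -\frac{1}{\gamma_n}\int_{\mathbb{R}^n} \ln\left(\frac{(1+|y|)|x|}{|x-y|}\right) e^{nu(y)}\,dy \\
	&= -\frac{1}{\gamma_n}\int_{\mathbb{R}^n} \ln\left(\frac{1+|y|}{|e(x)-y/|x||}\right)e^{nu(y)}\,dy,
\end{align*}
where $e(x)=x/|x|$, and then argue that as $|x|\to\infty$ the integrand converges pointwise (for $y$ fixed) to $-\frac{1}{\gamma_n}\ln(1+|y|)e^{nu(y)}$, since $|e(x)-y/|x|| \to 1$. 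Setting $\tilde c_0 := -\frac{1}{\gamma_n}\int_{\mathbb{R}^n}\ln(1+|y|)e^{nu(y)}\,dy$ (finite because $\ln(1+|y|)e^{nu(y)} \le C|y|^{n(\Lambda/\gamma_n+\beta)}\ln(1+|y|)$ is integrable once the decay of $e^{nu}$ from \eqref{f1a}, \eqref{fs6} and Lemma \ref{leas4c} — here $\beta=0$, $p$ quadratic with negative leading part — is invoked, together with the $L^1$ bound $\int e^{nu}<\infty$), the claim reduces to a quantitative rate of convergence.

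For the rate, I would split the integral over the three regions $A_1 = B_{|x|/2}(0)$, $A_2 = B_{|x|/2}(x)$, and $A_3 = \mathbb{R}^n\setminus(A_1\cup A_2)$ (the "far field"). On $A_1$ one has $|x-y|\sim|x|$, so $\ln\big((1+|y|)|x|/|x-y|\big) = \ln(1+|y|) + O(|y|/|x|)$ uniformly, and multiplying by $e^{nu(y)}$ and integrating gives the contribution to $\tilde c_0$ plus an error $O(|x|^{-1}\int_{A_1}|y|e^{nu(y)}\,dy)$, which is $O(|x|^{-\tau})$ for any $\tau<1$ after using the polynomial decay of $e^{nu}$ to bound the truncated moment. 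On $A_2$ the factor $(1+|y|)\sim|x|$ and $e^{nu(y)}\le C|y|^{n(\Lambda/\gamma_n)}e^{np(y)}$ with $p(y)\le c_1-c_2|y|^2$ from Lemma \ref{leas4c}, so $e^{nu(y)}$ is exponentially small on $A_2$ (since $|y|\sim|x|$ there), killing any logarithmic growth of $\ln|x|$ — this region contributes $O(e^{-c|x|^2})$. The far field $A_3$ is handled similarly: there $|y|\gtrsim|x|$, so the exponential Gaussian decay from $p$ dominates and the whole contribution is exponentially small. Assembling these, $v(x) = \frac{\Lambda}{\gamma_n}\ln|x| + \tilde c_0 + O(|x|^{-\tau})$ for every $\tau\in(0,1)$, and adding $p(x)$ gives \eqref{faas14}.

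For the derivative estimates $D^kv(x) = O(|x|^{-k})$, $0<k\le n-1$, I would differentiate under the integral sign: $D^k v(x) = \frac{1}{\gamma_n}\int_{\mathbb{R}^n} D^k_x\big[\ln|x-y|\big]e^{nu(y)}\,dy$ (the $(1+|y|)$ factor is $x$-independent), with $|D^k_x\ln|x-y|| \le C|x-y|^{-k}$. Splitting into $B_1(x)$ and its complement: on $B_1(x)$, $e^{nu(y)}\le C$ for $|x|$ large (since $u$ is bounded above there, using \eqref{f1a} and Lemma \ref{leas4c}), and for $|x|$ large this ball is well inside the Gaussian-decay region so $\int_{B_1(x)}|x-y|^{-k}e^{nu(y)}\,dy \le Ce^{-c|x|^2}$; on $\mathbb{R}^n\setminus B_1(x)$, $|x-y|^{-k}\le 1$ and we just need $\int e^{nu}<\infty$, but to get the rate $|x|^{-k}$ one splits again at $B_{|x|/2}(x)$ versus $B_{|x|/2}(0)$ versus far field, using $|x-y|^{-k}\le C|x|^{-k}$ on $B_{|x|/2}(0)$ and the Gaussian decay near $x$ and at infinity. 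The main obstacle I anticipate is not conceptual but bookkeeping: producing the error rate $O(|x|^{-\tau})$ with the correct constant $\tilde c_0$ requires carefully quantifying the truncated-moment integral $\int_{B_{|x|/2}}|y|e^{nu(y)}\,dy$ — one must show it is $O(|x|^{1-\tau})$, which follows from the uniform bound $e^{nu(y)}\le C(1+|y|)^{-n-2\sigma}$ for any $\sigma>0$ that is itself a consequence of the Gaussian decay of $e^{np}$, but making the dependence on $\tau$ explicit and verifying it is uniform in $x$ is the delicate part. Everything else reduces to Lemmas \ref{lea2c}, \ref{lem2c}, \ref{leas4c} and elementary estimates on $\ln|x-y|$.
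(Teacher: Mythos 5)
Your proposal is correct but takes a genuinely different route from the paper's. The paper proves Lemma~\ref{lea8} via the Kelvin transform: it sets $\tilde v_0(x) := v\!\left(\frac{x}{|x|^2}\right) + \frac{\Lambda}{\gamma_n}\ln|x|$, shows that near the origin $(-\Delta)^{n/2}\tilde v_0 = -|x|^{-n(2+\Lambda/\gamma_n)} e^{n(\tilde v_0 + p(x/|x|^2))}$ with right-hand side in $L^p(B_1)$ for every $p>1$ (thanks to the Gaussian decay of $e^{np}$ from Lemma~\ref{leas4c}), and concludes by elliptic regularity that $\tilde v_0\in C^{0,\tau}(B_1)$, hence $\tilde v_0(x) = \tilde c_0 + O(|x|^\tau)$ with $\tilde c_0 := \tilde v_0(0)$, which transforms back to \eqref{faas14}. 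You instead argue directly from the potential representation \eqref{aaz1}: you identify the constant explicitly as $\tilde c_0 = -\frac{1}{\gamma_n}\int_{\R^n}\ln(1+|y|)e^{nu(y)}\,dy$, rewrite the error as $-\frac{1}{\gamma_n}\int_{\R^n}\ln\frac{|x|}{|x-y|}\,e^{nu(y)}\,dy$, and split into $B_{|x|/2}(0)$, $B_{|x|/2}(x)$ and the far field, using $\left|\ln\frac{|x|}{|x-y|}\right|\le C|y|/|x|$ on the first region and the Gaussian decay of $e^{nu}$ on the other two. Both arguments are sound. Yours is more elementary --- it avoids elliptic regularity entirely, a welcome simplification when $n=3$ and $(-\Delta)^{3/2}$ is nonlocal --- gives an explicit formula for $\tilde c_0$, and in fact delivers the sharper rate $O(|x|^{-1})$ if the Gaussian decay is used to control $\int_{B_{|x|/2}}|y|e^{nu}\,dy$ directly (your restriction to $\tau<1$ stems only from replacing that decay by a polynomial bound, which is unnecessarily lossy). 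Note also that your three-region decomposition is precisely the one the paper employs in the separate Lemma~\ref{lemma13} for the derivative bounds $D^k v = O(|x|^{-k})$; so your proof handles the constant term and the derivative estimates by a single unified device, whereas the paper uses two distinct arguments. The paper's Kelvin-transform argument is shorter given that the inversion identity \eqref{aaz3} and the needed regularity statements are already in place.
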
		
\proof 	By Lemmas \ref{lea1c} and \ref{leas4c}, for $|x|$ large, we bound
\begin{equation*}\label{fl1}
	\begin{split}
		\int_{B_1(x)}\ln \left(\frac{1}{|x-y|}\right)e^{nu(y)}dy&= \int_{B_1(x)}\ln \left(\frac{1}{|x-y|}\right)e^{n(v(y)+p(y))}dy\\
		&\leq \int_{B_1(x)}\ln \left(\frac{1}{|x-y|}\right)|y|^{\frac{n\Lambda}{\gamma_n}}e^{-n (c_2 |x|^2-c_1)}dy\\
		&\leq C, 
	\end{split}
\end{equation*}
because $c_2>0$. This together with Lemma \ref{lea2c} yields
\begin{equation}\label{faa8d}
	\begin{split}
		\liminf_{ |x|\rightarrow \infty}\frac{v(x)}{\ln |x|}\geq \frac{\Lambda}{\gamma_{n}}-\epsilon.
	\end{split}
\end{equation}
Since $\epsilon$ is arbitary, from (\ref{faa8d}) and Lemma \ref{lea1c}, 
\begin{equation}\label{faa10}
	\lim_{|x|\rightarrow \infty}\frac{v(x)}{\ln|x|}=\frac{\Lambda}{\gamma_{n}}.
\end{equation}
Denote $\tilde{v}_0(x):=v\left(\frac{x}{|x|^2}\right)+\frac{\Lambda}{\gamma_{n}}\ln(|x|)$. It is not hard to  check that 
\begin{equation}\label{faa9}
	(-\Delta)^{n/2}\tilde{v}_0(x)=-|x|^{-n\left(2+\frac{\Lambda}{\gamma_{n}}\right)}e^{n \left(\tilde{v}_0(x)+p\left(\frac{x}{|x|^2}\right)\right)}\quad  \text{on} \quad \mathbb{R}^n\backslash\{0\}.
\end{equation}
Keep in mind  that $p\left(\frac{x}{|x|^2}\right)\rightarrow-\infty$ as $|x|\rightarrow 0$.  Moreover, using  (\ref{faa10}) directly leads to
\begin{equation*}\label{faas12}
	\lim_{|x|\rightarrow 0}\frac{\tilde{v}_0(x)}{\ln|x|}=0.
\end{equation*}
 Hence, the right hand side of (\ref{faa9}) belongs to $L^p(B_1)$ for any $p>1$. Applying the regularity theorems to (\ref{faa9}), we obtain that $\tilde{v}_0\in C^{0, \tau}(B_1)$ for any $0<\tau<1$. 	Thus, 
\begin{equation}\label{faas13}
	\tilde{v}_0(x)=\tilde{c}_0+O(|x|^{\tau})\quad \text{as}\quad |x|\rightarrow 0,
\end{equation}
where $\tilde{c}_0:=\tilde{v}_0(0)$. Then it follows from (\ref{faas13}) and Lemma \ref{lem2c} that
\begin{equation*}
	u(x)=\frac{\Lambda}{\gamma_{n}}\ln(|x|)+p(x)+\tilde{c}_0+O(|x|^{-\tau})
\end{equation*}
as $|x|\rightarrow \infty$. Consequently, the proof of the lemma is done. 	$\hfill\Box$\\

Now we are in position  to prove 
\begin{lemma}\label{lemma13}
	Let $0<k\leq n-1$ be an integer.  For $n\in\{3,4\}$, we have 
	\begin{equation}\label{aakxc}
		D^kv(x)=O(|x|^{-k})\quad \quad \text{as}\quad |x|\rightarrow \infty.
	\end{equation}
\end{lemma}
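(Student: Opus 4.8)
The plan is to derive the decay of derivatives of $v$ from the sharp asymptotics $v(x) = \frac{\Lambda}{\gamma_n}\ln|x| + \tilde c_0 + O(|x|^{-\tau})$ obtained in Lemma \ref{lea8}, combined with the Hölder regularity of the Kelvin-transformed function $\tilde v_0$ near the origin. The key observation is that $D^k v(x) = O(|x|^{-k})$ is, after the inversion $x \mapsto x/|x|^2$, essentially equivalent to boundedness of $D^k \tilde v_0$ near $0$, so the task reduces to upgrading the $C^{0,\tau}$ bound on $\tilde v_0$ to a $C^{n-1,\tau}$-type bound.

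First I would recall from \eqref{faa9} that $\tilde v_0$ solves $(-\Delta)^{n/2}\tilde v_0 = -|x|^{-n(2+\Lambda/\gamma_n)} e^{n(\tilde v_0 + p(x/|x|^2))}$ on $\mathbb{R}^n \setminus \{0\}$, and that the right-hand side is bounded — in fact it decays like a large power of $|x|$ as $|x|\to 0$ because $p(x/|x|^2) \to -\infty$ there. Since $\tilde v_0 \in C^{0,\tau}(B_1)$ and the right-hand side is smooth and bounded on, say, $B_{1/2}\setminus\{0\}$ (with all derivatives controlled), elliptic/Sobolev regularity for the polyharmonic operator $(-\Delta)^{n/2}$ (in the odd case, the nonlocal regularity theory for $(-\Delta)^{n/2}$ as in \cite{AH}, which we may quote) gives $\tilde v_0 \in C^{m,\tau}(B_{1/4})$ for every $m$, and in particular $D^k \tilde v_0$ is bounded near $0$ for $0 < k \leq n-1$. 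I would then translate this back: writing $v(x) = \tilde v_0(x/|x|^2) - \frac{\Lambda}{\gamma_n}\ln|x|$ and differentiating, the chain rule produces terms of the form $(\text{bounded near } 0) \times (\text{derivatives of } x/|x|^2)$, and each $j$-th order derivative of the map $x \mapsto x/|x|^2$ is homogeneous of degree $-1-j$, so it is $O(|x|^{-1-j})$ for $|x|$ large; meanwhile differentiating $\ln|x|$ gives $O(|x|^{-k})$. Bookkeeping the homogeneities via the Faà di Bruno formula shows the dominant contribution to $D^k v(x)$ is $O(|x|^{-k})$, which is exactly \eqref{aakxc}.

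Alternatively, and perhaps more cleanly, I would estimate $D^k v$ directly from the integral representation \eqref{aaz1}: one has $|D^k v(x)| \leq C \int_{\mathbb{R}^n} |x-y|^{-k} e^{nu(y)}\,dy$, and then split the integral into the region $B_{|x|/2}(x)$, where $|x-y|^{-k} \leq C|x|^{-k}$ is lost but compensated by $\int_{\mathbb{R}^n} e^{nu} < \infty$; the region near the origin, where $e^{nu(y)}$ is controlled using $v(y) \le 0$ on $B_1$ and $p$ quadratic, so the contribution is $O(|x|^{-k})$ trivially; and the region $B_1(x)$ (when $|x|$ large, disjoint from the origin), where one uses Lemma \ref{lea1c}, the expression \eqref{fs6}, and Lemma \ref{leas4c} to bound $e^{nu(y)} \leq C|y|^{n\Lambda/\gamma_n} e^{-n(c_2|y|^2 - c_1)}$ on that annular region, making $\int_{B_1(x)} |x-y|^{-k} e^{nu(y)}\,dy$ superpolynomially small. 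Summing the pieces yields $|D^k v(x)| \leq C|x|^{-k}$.

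The main obstacle will be the careful homogeneity bookkeeping in the Kelvin-transform approach (or equivalently making the three-region split in the integral approach fully rigorous), and — in the odd-dimensional case $n = 3$ — invoking the correct nonlocal elliptic regularity statement for $(-\Delta)^{n/2}$ to bootstrap $\tilde v_0$ from $C^{0,\tau}$ to $C^{n-1,\tau}$; for $n = 3$ this means going up to $C^{2,\tau}$, which requires knowing that the $(-\Delta)^{3/2}$-harmonic-like regularity theory applies on the punctured ball with a smooth bounded right-hand side. Since $n \in \{3,4\}$ only, both cases are low-dimensional and the required regularity inputs (Schauder for the bi-Laplacian when $n=4$, and the fractional regularity results already cited from \cite{AH} when $n=3$) are available, so I expect the argument to go through without genuinely new difficulties.
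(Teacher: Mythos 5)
Your second (``alternative'') argument is essentially the paper's proof. The paper writes $|D^k v(x)| \leq C\int_{\mathbb{R}^n} |x-y|^{-k} e^{nu(y)}\,dy$ and decomposes $\mathbb{R}^n = A_1 \cup A_2 \cup A_3$ with $A_1 = \{|y| \leq |x|/2\}$, $A_2 = \{|x-y| \leq |x|/2\}$, $A_3$ the rest: on $A_1 \cup A_3$ one has $|x-y| \geq |x|/2$, so the factor $|x-y|^{-k} \leq C|x|^{-k}$ together with $\int e^{nu} < \infty$ gives $O(|x|^{-k})$; on $A_2$ one has $|y| \sim |x|$, so the quadratic decay $p(y) \leq c_1 - c_2|y|^2$ from Lemma~\ref{leas4c} (combined with $v(y) \leq \frac{\Lambda}{\gamma_n}\ln|y|$ from Lemma~\ref{lea1c}) makes $e^{nu(y)}$ superpolynomially small, so that region contributes $O(|x|^{-k})$ as well. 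Your three regions are a slightly garbled version of this (note that $|x-y|^{-k} \leq C|x|^{-k}$ holds \emph{outside} $B_{|x|/2}(x)$, not inside, and your $B_1(x) \subset B_{|x|/2}(x)$; you need the quadratic decay on all of $B_{|x|/2}(x)$, where $|x-y|^{-k}$ is at worst locally integrable and $|y|\sim|x|$), but the two ingredients you identify --- finite total mass away from $x$, quadratic decay of $p$ near $x$ --- are exactly the paper's.

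Your first, Kelvin-transform approach is a genuinely different route. The paper does pass through $\tilde v_0$ and regularity theory in Lemma~\ref{lea8} to get $\tilde v_0 \in C^{0,\tau}(B_1)$, but it does \emph{not} bootstrap to $C^{n-1,\tau}$ and instead switches to the direct integral estimate for the derivative bounds. Your bootstrap idea should work and, via Fa\`a di Bruno, would even yield the stronger $O(|x|^{-1-k})$ decay for the ``$\tilde v_0$-part'' of $D^k v$ (the $O(|x|^{-k})$ coming purely from the $\ln|x|$ term), so it would give a full asymptotic expansion rather than just the leading-order decay. The price is that one must justify the removable-singularity step at $0$ and the Schauder/nonlocal bootstrap rigorously --- for $n=4$ this is classical, for $n=3$ one needs the fractional regularity machinery from \cite{AH}, exactly as you note. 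So: approach two matches the paper, approach one is a valid and arguably stronger alternative that trades a short elementary computation for a more delicate regularity argument.
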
	 
\proof  Recall that $v(x)$ is defined in (\ref{aaz1}). We differentiate under the integral sign to get 
\begin{equation}\label{vb2}
	\begin{split}
		|D^kv(x)|\leq C\int_{\mathbb{R}^n}\frac{e^{nu(y)}}{|x-y|^k}dy.	
	\end{split}
\end{equation}
Decompose $\mathbb{R}^n=A_1\cup A_2 \cup A_3$, where 
$$A_1=\{y:|y|\leq \dfrac{|x|}{2}\},\quad A_2=\{y:|x-y|\leq\dfrac{|x|}{2}\},\quad A_3=\mathbb{R}^n\backslash \{A_1\cup A_2 \}.$$
For $y\in A_1\cup A_3 $, we have $|x-y|\geq |x|/2,$ which leads to
\begin{equation}\label{af14}
	\begin{split}
		\int_{A_1\cup A_3} \frac{1}{|x-y|^\ell}e^{nu(y)}dy\leq  \frac{C}{|x|^{k}} \int_{A_1\cup A_3} e^{nu(y)}dy\leq  \frac{C}{|x|^{k}}
	\end{split}
\end{equation}
for $|x|$ large. When $y\in A_2 $, one finds $|x|/2\leq|y|\leq 3|x|/2$. As a consequence of Lemmas \ref{lem2c} and \ref{leas4c}
\begin{equation}\label{afk14}
	\begin{split}
		\int_{A_2}\frac{1}{|x-y|^\ell}e^{nu(y)}dy\leq  C	\int_{A_2}\frac{e^{n(v(y)-c_2|y|^2)}}{|x-y|^k}dy\leq \frac{C}{|x|^k}.
	\end{split}
\end{equation}
 Putting (\ref{af14})-(\ref{afk14}) together gives (\ref{aakxc}). This ends the proof of the lemma.   $\hfill\Box$\\

Combining Lemmas  \ref{lem2c}-\ref{lemma13}, we eventually conclude
Theorem \ref{th2}. $\hfill\Box$\\

\section{Proof of Theorem \ref{tr3s}}	

We proceed by using a variational argument. This method has been adopted in several works to find solutions (see for example \cite{A-C,TJ,AH}).  Denote $w_0(x)=\ln \left(\frac{2}{1+|x|^2}\right),$ where $w_0$ satisfies
$$	(-\Delta)^{n/2}w_0=(n-1)!e^{nw_0},\quad \quad \int_{\mathbb{R}^n} e^{nw_0}dx=|\mathbb{S}^n|.$$ 
Remember that $\Lambda_1:=(n-1)!|\mathbb{S}^n|.$ For $n\geq 3$, we would like to find a solution to (\ref{az2}) of the form
\begin{equation}	\label{nxz1}
	u=\tilde{\psi}+c_{\tilde{\psi}}-\frac{\Lambda}{\Lambda_1} w_0+\beta \ln(|x|)+p(x)+q\left(\frac{x}{|x|^2}\right)
\end{equation}	
for some function $\tilde{\psi}(x)$ satisfying $\lim_{|x|\rightarrow \infty}\tilde{\psi}(x)\in \mathbb{R}$.
The existence of solutions of the form (\ref{nxz1}) can be deduced from the following lemma. 
\begin{lemma}\label{lemff12}
Let  $n\geq 3$ and $p,q$ be polynomials of even degree at most $n-1$. Suppose one of the following holds: \vspace{0.1cm} \\
\vspace{0.1cm} 
(i) \, $\beta\in\mathbb{R}$, $p(x), \, q(x)\rightarrow-\infty$ as $ |x|\rightarrow\infty$;\\
\vspace{0.1cm} 
(ii) \, $\beta>-1$,  $q(x)$ is upper-bounded and  $p(x)\rightarrow-\infty$ as $ |x|\rightarrow\infty$;\\
\vspace{0.1cm} 
(iii) \, $\beta<-1$, $p(x)$ is upper-bounded and $q(x)\rightarrow-\infty$ as $ |x|\rightarrow\infty.$

\noindent Then for $\Lambda>0$, there exists a solution $\tilde{\psi}$ such that  $\tilde{\psi}(x)=o(\ln|x|)$ as $|x|\rightarrow \infty$ and 
	\begin{equation*}	\label{a16a}
		(-\Delta)^{n/2}\tilde{\psi}=-Ke^{n(\tilde{\psi}+c_{\tilde{\psi}})}+\frac{(n-1)!\Lambda}{\Lambda_1}e^{nw_0},
	\end{equation*}	
	where 
	\begin{equation}	\label{cc6}
		K(x):=|x|^{n\beta}e^{n\left(p(x)+q\left(\frac{x}{|x|^2}\right)-\frac{\Lambda}{\Lambda_1}w_0\right)},\quad \quad  c_{\tilde{\psi}}:=\frac{1}{n}\ln \frac{\Lambda}{\int_{\mathbb{R}^n}Ke^{n\tilde{\psi}}dx}.
	\end{equation}
In particular, under assumption (i) or (iii), one has  $\tilde{\psi}\in C^{\infty}(\mathbb{R}^n)$ and under assumption (ii), one has $\tilde{\psi}\in C^{\infty}(\mathbb{R}^n\backslash \{0\})\cap C^{0, \alpha}_{loc}(\mathbb{R}^n)$ for $0<\alpha<1$.
\end{lemma}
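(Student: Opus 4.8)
The plan is to set up a variational problem whose critical points correspond to the equation for $\tilde\psi$, following the scheme used in \cite{A-C,TJ,AH}. First I would introduce the shifted weight: rewriting the desired equation, a solution $\tilde\psi$ with $\lim_{|x|\to\infty}\tilde\psi(x)\in\mathbb R$ should solve $(-\Delta)^{n/2}\tilde\psi=-Ke^{n(\tilde\psi+c_{\tilde\psi})}+\frac{(n-1)!\Lambda}{\Lambda_1}e^{nw_0}$ with $K$ as in \eqref{cc6}. The natural functional space is the space $H$ of functions $\psi$ with $(-\Delta)^{n/4}\psi\in L^2(\mathbb R^n)$ modulo constants, or equivalently a homogeneous Sobolev-type space adapted to $w_0$; because $w_0$ decays like $-2\ln|x|$ and $\int e^{nw_0}<\infty$, the term $\frac{(n-1)!\Lambda}{\Lambda_1}e^{nw_0}$ is an admissible right-hand side. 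On $H$ one defines
\[
	J(\psi)=\frac12\int_{\mathbb R^n}\bigl|(-\Delta)^{n/4}\psi\bigr|^2\,dx-\frac{(n-1)!\Lambda}{\Lambda_1}\int_{\mathbb R^n}e^{nw_0}\psi\,dx+\frac{\Lambda}{n}\ln\!\left(\int_{\mathbb R^n}Ke^{n\psi}\,dx\right).
\]
A critical point $\tilde\psi$ of $J$ satisfies the equation with $c_{\tilde\psi}$ exactly the normalizing constant in \eqref{cc6} (this is the standard Lagrange-multiplier-free trick: the volume constraint is absorbed into the logarithmic term). So the whole problem reduces to showing $J$ attains its infimum on $H$.

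The key analytic input is a Moser--Trudinger / Adams-type inequality ensuring $J$ is well-defined, coercive and bounded below. Concretely, the integrability of $\int_{\mathbb R^n}Ke^{n\psi}\,dx$ for $\psi\in H$, together with a bound $\ln\int Ke^{n\psi}\le \frac{n}{2\Lambda_1}\int|(-\Delta)^{n/4}\psi|^2+C\int e^{nw_0}\psi+C$, is what makes the quadratic term dominate. Here the precise constant $\Lambda_1=(n-1)!|\mathbb S^n|$ is the sharp constant, and the hypothesis $\Lambda>0$ (and the decomposition with $-\frac{\Lambda}{\Lambda_1}w_0$ built into $K$) guarantees we are in the \emph{subcritical} regime, so that strict inequality holds with room to spare and $J$ is coercive. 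This is where the sign hypotheses on $p$ and $q$ enter: we must check that $K(x)=|x|^{n\beta}e^{n(p(x)+q(x/|x|^2)-\frac{\Lambda}{\Lambda_1}w_0)}$ has enough decay at $\infty$ and integrable-type behavior at $0$ for the Moser--Trudinger estimate and the finiteness of $\int Ke^{n\psi}$ to hold. Under (i), $p,q\to-\infty$ forces $K$ to decay super-polynomially at both $0$ and $\infty$ (recall $q(x/|x|^2)\to-\infty$ as $|x|\to0$), so $K$ is bounded and rapidly decaying — the cleanest case. Under (ii), $q$ only upper-bounded means $K$ may behave like $|x|^{n\beta}$ near $0$ up to a bounded factor times $e^{nq(x/|x|^2)}$ which is harmless at $0$; one needs $\beta>-1$ precisely so that $|x|^{n\beta}e^{-n\frac{\Lambda}{\Lambda_1}w_0}\sim|x|^{n\beta}$ is locally integrable with a margin, which accounts for the loss of smoothness (only $C^{0,\alpha}_{loc}$) at the origin. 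Case (iii) is symmetric to (ii) under the Kelvin transform $x\mapsto x/|x|^2$ exchanging the roles of $p$ and $q$ and sending $\beta\mapsto -\beta-2-\frac{\Lambda}{\gamma_n}$, as in \eqref{baa9}, so $\beta<-1$ there plays the role of $\beta>-1$ here.

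Having established coercivity and lower boundedness, I would take a minimizing sequence $(\psi_k)$, extract a weakly convergent subsequence in $H$ (bounded by coercivity), pass to the limit using weak lower semicontinuity of the Dirichlet term, Fatou / compactness for the linear term against $e^{nw_0}$, and the Moser--Trudinger inequality to pass to the limit (or bound from below) in $\int Ke^{n\psi_k}$ — the concentration-compactness alternative is excluded because we are strictly subcritical. The minimizer $\tilde\psi$ is then a weak solution; elliptic regularity (interior $L^p$ and Schauder estimates for $(-\Delta)^{n/2}$, in the nonlocal odd case using the extension/regularity results already cited, e.g. \cite[Proposition 2.1]{AH} and \cite{hmm}) upgrades it to $C^\infty$ away from the singular support of $K$, i.e. everywhere under (i),(iii) and on $\mathbb R^n\backslash\{0\}$ with $C^{0,\alpha}_{loc}$ across $0$ under (ii) because the right-hand side then lies in $L^p_{loc}$ for every finite $p$ but not $L^\infty_{loc}$. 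Finally, the decay $\tilde\psi(x)=o(\ln|x|)$ as $|x|\to\infty$ follows from the representation formula $\tilde\psi(x)=-\frac1{\gamma_n}\int\ln\frac{1+|y|}{|x-y|}(\cdots)\,dy+\text{const}$ together with $\lim\tilde\psi\in\mathbb R$: since the total right-hand side has finite mass and the would-be logarithmic growth coefficients cancel by construction of $c_{\tilde\psi}$ and the $w_0$-term, one gets boundedness, hence $o(\ln|x|)$.

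The main obstacle I expect is making the Moser--Trudinger / Adams inequality with the singular weight $K$ work uniformly in all three cases — in particular verifying that the constant in front of the quadratic term is genuinely subcritical and that the weight's behavior at the origin (where it can be as bad as $|x|^{n\beta}$, merely locally integrable when $\beta>-1$) does not destroy the estimate; this is precisely the point where the hypotheses on $\beta$, $p$, $q$ are used, and where one must be careful in the nonlocal odd-dimensional setting since the standard Adams inequality is usually stated for even $n$. A secondary technical point is the correct choice of function space so that $\frac{(n-1)!\Lambda}{\Lambda_1}e^{nw_0}$ pairs continuously with $H$ and so that the minimizer inherits $\lim_{|x|\to\infty}\tilde\psi\in\mathbb R$ rather than merely $o(\ln|x|)$ a priori; this is handled by working modulo constants and using the logarithmic-kernel representation a posteriori.
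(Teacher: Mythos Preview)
Your variational setup is correct, but the coercivity argument has the inequality direction backwards. In the functional
\[
J(\psi)=\tfrac12\int|(-\Delta)^{n/4}\psi|^2-\tfrac{(n-1)!\Lambda}{\Lambda_1}\int e^{nw_0}\psi+\tfrac{\Lambda}{n}\ln\!\int Ke^{n\psi},
\]
the logarithmic term carries a \emph{plus} sign (this is precisely the effect of negative curvature). Hence to bound $J$ from below you need a \emph{lower} bound on $\ln\int Ke^{n\psi}$, not the Moser--Trudinger/Adams upper bound you invoke. An upper bound of the form $\ln\int Ke^{n\psi}\le \tfrac{n}{2\Lambda_1}\int|(-\Delta)^{n/4}\psi|^2+\cdots$ gives only an upper bound on $J$ and says nothing about coercivity; there is no ``subcritical regime'' here, and concentration-compactness is irrelevant. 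The paper instead applies Jensen's inequality with respect to the probability measure $|\mathbb S^n|^{-1}e^{nw_0}dx$ to obtain
\[
\ln\!\int Ke^{n\psi}\,dx\;\ge\; n\,\fint e^{nw_0}\psi\,dx+\int e^{nw_0}\ln K\,dx+\text{const},
\]
which cancels the linear term exactly once one normalizes $\int e^{nw_0}\psi=0$, leaving $J(\psi)\ge \tfrac12\int|(-\Delta)^{n/4}\psi|^2-C$. Coercivity is therefore elementary. The hypotheses (i)--(iii) on $\beta,p,q$ enter not through any sharp constant but only to guarantee $\int e^{nw_0}\ln K\,dx>-\infty$, i.e.\ that the Jensen lower bound is finite; this is what you should check case by case.

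A secondary point: the paper pulls everything back to $\mathbb S^n$ via stereographic projection and works in $H^{n/2}(\mathbb S^n)$ with the Paneitz operator $P_{g_0}^n$, so that the compact embeddings $H^{n/2}(\mathbb S^n)\hookrightarrow L^p$ and $\psi\mapsto e^{n\psi}$ needed to pass to the limit in a minimizing sequence are standard on a compact manifold. Your choice to work directly on $\mathbb R^n$ in a homogeneous space would require you to supply these compactness statements separately. The asymptotic $\tilde\psi=o(\ln|x|)$ then comes for free from continuity of the minimizer $\psi_0$ on $\mathbb S^n$ at the north pole, rather than from a representation-formula argument.
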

\proof   Let $g_0$ be the round metric on the sphere $\mathbb{S}^n=\{x\in \mathbb{R}^{n+1}:|x|=1\}.$  Denote $\pi: \mathbb{S}^n\backslash \{N\}\rightarrow \mathbb{R}^n$ the stereographic projection, where $N=(0,\cdots, 0,1)\in \mathbb{S}^n$  is the north pole, and $\tilde{K}=K\circ \pi,$ where $K$ is given by (\ref{cc6}).  In what follows, we consider the space given by Definition 6.4
\begin{equation}\label{af28}
	\begin{split}
		H^{\frac{n}{2}}(\mathbb{S}^n)=\{u\in L^{2}(\mathbb{S}^n): \; \|u\|_{H^{\frac{n}{2}}(\mathbb{S}^n)}^2:=\|u\|_{L^{2}(\mathbb{S}^n)}^2+\|(P_{g_0}^n)
		^{\frac{1}{2}}u \|_{L^{2}(\mathbb{S}^n)}^2<\infty\},
	\end{split}
\end{equation}	
where $P_{g_0}^n$ is the Paneitz operator of order $n$ with respect to $g_0$.  Define the functional $J$ on $H^{\frac{n}{2}}(\mathbb{S}^n)$ by
\begin{equation}	\label{aq17a}
	J(\psi)=\int_{\mathbb{S}^n}\left(\frac{1}{2}|(P_{g_0}^n)^{\frac{1}{2}}\psi|^{2}-\frac{(n-1)!\Lambda}{\Lambda_1} \psi\right)dg_0+ \frac{\Lambda}{n}\ln \left(\int_{\mathbb{S}^n}\tilde{K}e^{n(\psi-w_0\circ \pi)}dg_0\right).
\end{equation}
Using the Jensen inequality, one finds
\begin{equation}	\label{xx3}
	\frac{1}{|\mathbb{S}^n|} \int_{\mathbb{S}^n}\tilde{K}e^{n(\psi-w_0\circ \pi)}dg_0\geq \exp\left[\frac{1}{|\mathbb{S}^n|} \int_{\mathbb{S}^n} \ln \left(\tilde{K}e^{n(\psi-w_0\circ \pi)}\right)dg_0\right].
\end{equation}
Set $\bar{\psi}:=\frac{1}{|\mathbb{S}^n|}\int_{\mathbb{S}^n}\psi dg_0.$ 
Taking the logarithm of the  both sides of (\ref{xx3}) yields 
\begin{equation}	\label{xx4}
	\begin{split}
		\ln\left(\int_{\mathbb{S}^n}\tilde{K}e^{n(\psi-w_0\circ \pi)}dg_0\right)&\geq \frac{1}{|\mathbb{S}^n|} \int_{\mathbb{S}^n} \ln \left(\tilde{K}e^{n(\psi-w_0\circ \pi)}\right)dg_0+\ln |\mathbb{S}^n|\\
		&\geq 	\frac{1}{|\mathbb{S}^n|} \int_{\mathbb{S}^n} \ln \left(\tilde{K}e^{-nw_0\circ \pi}\right)dg_0+n\bar{\psi}+\ln |\mathbb{S}^n|,
	\end{split}
\end{equation}
where $C>0$ is a constant. Since $J(\psi+c)=J(\psi)$ for any constant $c$, we can take a minimizing sequence $\{\psi_k\}$ of (\ref{aq17a}) with  $\bar{\psi}_k:=\frac{1}{|\mathbb{S}^n|}\int_{\mathbb{S}^n}\psi_k dg_0=0$.  Moreover,   observe that
\begin{equation}	\label{xxv44}
	\begin{split}
		\int_{\mathbb{S}^n} \ln \left(\tilde{K}e^{-nw_0\circ \pi}\right)dg_0&\geq \int_{\mathbb{S}^n} \ln  \tilde{K}dg_0-C,
	\end{split}
\end{equation}
Under the assumption (i),  we can bound
\begin{equation}	\label{xxvc5}
	\begin{split}
\int_{\mathbb{S}^n}	\ln  \tilde{K}dg_0&= \int_{\mathbb{R}^n} \ln  \left(|x|^{n\beta}e^{n\left(p(x)+q\left(\frac{x}{|x|^2}\right)-\frac{\Lambda}{\Lambda_1}w_0\right)}\right)e^{nw_0}dx\\
&\leq  \left(\int_{B_1}+ \int_{\mathbb{R}^n\backslash B_1}\right) \left|\ln  \left(|x|^{n\beta}e^{n\left(p(x)+q\left(\frac{x}{|x|^2}\right)-\frac{\Lambda}{\Lambda_1}w_0\right)}\right)\right|e^{nw_0}dx\\
	&\leq C\int_{B_1}\frac{ |\ln |x||}{|x|^{2}}dx+C\int_{\mathbb{R}^n\backslash B_1}\frac{ \ln (1+|x|)}{|x|^{2(n-1)}}dx\\
	&\leq C.
	\end{split}
\end{equation}
For assumption (ii), similarly as in (\ref{xxvc5}), there holds  
\begin{equation}	\label{xxvc6}
	\begin{split}
		\int_{\mathbb{S}^n}	\ln  \tilde{K}dg_0\leq C\int_{B_1}\ln \left(\frac{1}{|x|}\right)dx+C\int_{\mathbb{R}^n\backslash B_1}\frac{ \ln (1+|x|)}{|x|^{2(n-1)}}dx\leq C.
	\end{split}
\end{equation}
Otherwise, given condition (iii), one can easily check
\begin{equation}	\label{xxvc7}
	\begin{split}
		\int_{\mathbb{S}^n}	\ln  \tilde{K}dg_0\leq C\int_{B_1}\frac{ |\ln |x||}{|x|^{2}}dx+C\int_{\mathbb{R}^n\backslash B_1}\frac{ \ln (1+|x|)^2}{|x|^{2n}}dx\leq C.
	\end{split}
\end{equation}
The  above estimates (\ref{xxvc5})-(\ref{xxvc7}) are due to $n\geq 3$.  Summarizing the three estimates,  together with (\ref{xxv44}), we derive
\begin{equation}	\label{xxv4}
	\begin{split}
		\int_{\mathbb{S}^n} \ln \left(\tilde{K}e^{-nw_0\circ \pi}\right)dg_0&\geq -C,
	\end{split}
\end{equation}
Combining  (\ref{aq17a}), (\ref{xx4}) and (\ref{xxv4}), we get
\begin{equation}\label{a25a}
	J(\psi_k)\geq \frac{1}{2}\|(P_{g_0}^n)^{\frac{1}{2}}\psi_k \|_{L^2(\mathbb{S}^n)}^2-\frac{\Lambda }{n}C.
\end{equation}
Thanks to  the Poincar$\mathrm{\acute{e}}$-type inequality,  one has 
\begin{equation}\label{aa25a}\|\psi_k\|_{L^2(\mathbb{S}^n)}\leq \|(P_{g_0}^n)^{\frac{1}{2}}\psi_k\|_{L^2(\mathbb{S}^n)}.
\end{equation}
Due to (\ref{a25a}) and (\ref{aa25a}), we obtain that $\{\psi_k\}$ is bounded in $H^{\frac{n}{2}}(\mathbb{S}^n)$. Then up to a subsequence, $\psi_k\rightharpoonup \psi_0$ weakly for some $\psi_0 $ in $ H^{\frac{n}{2}}(\mathbb{S}^n)$. This implies that
\begin{equation}\label{a26a}
	\lim_{k\rightarrow \infty} \int_{\mathbb{S}^n} \psi_kdg_0=\int_{\mathbb{S}^n}  \psi_0 dg_0,\quad \quad \|\psi_0\|_{H^{\frac{n}{2}}(\mathbb{S}^n)} \leq \liminf_{k\rightarrow \infty} \|\psi_k\|_{H^{\frac{n}{2}}(\mathbb{S}^n)}.
\end{equation}
Since the maps  $\psi \mapsto \psi$ and $\psi\mapsto e^\psi$ are compact from  $H^{\frac{n}{2}}(\mathbb{S}^n)$ to $L^p(\mathbb{S}^n)$  for any $p\geq1$ (see \cite[page 20]{T-L}), 
\begin{equation}\label{a27a}
	\lim_{k\rightarrow \infty} \ln \left(\int_{\mathbb{S}^n}\tilde{K}e^{n(\psi_k-w_0\circ \pi)}dg_0\right)= \ln \left(\int_{\mathbb{S}^n}\tilde{K}e^{n(\psi_0-w_0\circ \pi)}dg_0\right)
\end{equation}
and $	\lim_{k\rightarrow \infty}\|\psi_k\|_{L^2(\mathbb{S}^n)}=\|\psi_0\|_{L^2(\mathbb{S}^n)}.$
Then it follows from (\ref{af28}) and (\ref{a26a})  that
\begin{equation}\label{ac29a}
	\|(P_{g_0}^n)
	^{\frac{1}{2}}\psi_0 \|_{L^{2}(\mathbb{S}^n)}^2 \leq \liminf_{k\rightarrow \infty} \|(P_{g_0}^n)
	^{\frac{1}{2}}\psi_k \|_{L^{2}(\mathbb{S}^n)}^2.
\end{equation}
Taking into account of (\ref{aq17a}) and (\ref{a26a})-(\ref{ac29a}) gives 
\begin{equation}\label{b6a}
	J(\psi_0)\leq  \liminf_{k\rightarrow \infty} J(\psi_k)=m.
\end{equation}
Since $\psi_0\in H^{\frac{n}{2}}(\mathbb{S}^n), $  by (\ref{b6a}), we arrive at
$$J(\psi_0)=m:=\inf \left\{J(\psi):\, \psi\in H^{\frac{n}{2}}(\mathbb{S}^n) \right\}.$$
Hence $\psi_0$ is a minimizer of $J$, namely, $\psi_0\in H^{n}(\mathbb{S}^n)$ is a weak solution of
\begin{equation}\label{dxc1}
	P_{g_0}^n\psi_0=\frac{(n-1)!\Lambda}{\Lambda_1}-\dfrac{\Lambda \tilde{K}e^{n(\psi_0-w_0\circ \pi)} }{\int_{\mathbb{S}^n}\tilde{K} e^{n(\psi_0-w_0\circ \pi)}dg_0}.  
\end{equation}
Notice that $P_{g_0}^n\psi_0\in L^{p}(\mathbb{S}^n)$ for $p>1$.  Using the regularity theory \cite[Lemmas 2.5 and 2.6]{AH},  we know that $\psi_0\in C^{0}(\mathbb{S}^n)$.  Let $\tilde{\psi}=\psi_0\circ \pi^{-1}$. Since $\psi_0$ is continuous at the north pole, one has  $\tilde{\psi}=\psi_0\circ \pi^{-1}=o(\ln|x|)$ as $|x|\rightarrow \infty$. Moreover, from \cite{TB} and (\ref{dxc1}) , there holds 
\begin{equation*}\label{dx1}
	\begin{split}
		(-\Delta)^{n/2}\tilde{\psi}
		&=e^{nw_0}\left(\frac{(n-1)!\Lambda}{\Lambda_1}-\dfrac{\Lambda  \tilde{K}e^{n(\psi_0-w_0\circ \pi)}}{\int_{\mathbb{S}^n}\tilde{K} e^{n(\psi_0-w_0\circ \pi)}dg_0}\right)\circ \pi^{-1}\\
		&=\frac{(n-1)!\Lambda}{\Lambda_1}e^{nw_0}-Ke^{n(\tilde{\psi}+c_{\tilde{\psi}})}.
	\end{split}
\end{equation*}
Under assumption (i) or (iii), since $K(x)\in C^\infty(B_1)$, we can bootstrap and
use Schauder’s estimate to prove that $\tilde{\psi}\in C^{\infty}(\mathbb{R}^n)$. Under assumption (ii),  notice that $K(x)\in L^p(B_1)$ for $1<p<2$. Using the embedding theorem, we get $\tilde{\psi}\in W^{n,p}(B_1)\hookrightarrow C^{0,\alpha}(B_1)$ for some  $0<\alpha<1$. Thus the lemma is proven. $\hfill\Box$\\

\indent\textit{Completion of the proof of Theorem \ref{tr3s}.} Thanks to  Lemma \ref{lemff12}, setting $u=\tilde{\psi}+c_{\tilde{\psi}}-\frac{\Lambda}{\Lambda_1} w_0+\beta \ln(|x|)+p(x)+q\left(\frac{x}{|x|^2}\right),$ we obtain
\begin{equation*}\label{dxs1}
	\begin{split}
		(-\Delta)^{n/2}u&=(-\Delta)^{n/2} \tilde{\psi}-\frac{\Lambda}{\Lambda_1}(-\Delta)^{n/2}w_0\\
		&=-Ke^{n(\tilde{\psi}+c_{\tilde{\psi}})}= -e^{u}\quad\quad \quad  \text{on}\quad\quad \mathbb{R}^{n}\backslash \{0\}.
\end{split}
\end{equation*}	
In particular, one can see	$$\int_{\mathbb{R}^n}e^{nu}dx=\int_{\mathbb{R}^n}Ke^{n(\tilde{\psi}+c_{\tilde{\psi}})}dx=\Lambda.$$
Therefore,  (\ref{az2})  admits a singular solution $u$ when $n\geq 3$.  This  completes the proof of  the theorem.  $\hfill\Box$\\

\section{Appendix: Some auxiliary lemmas}

\begin{lemma}\label{smooth 1}
	Let $n\geq 3$ and $u$ be a solution of (\ref{az2}). Then $u\in C^{\infty}(\mathbb{R}^n\backslash\{0\}).$
\end{lemma}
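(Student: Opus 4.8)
The plan is to follow the strategy of \cite[Theorem 2.1]{hmm}, which becomes considerably simpler in the present negative-curvature setting: the sign of the nonlinearity will force the logarithmic potential $v$ of \eqref{aaz1} to be bounded \emph{above} near every point $x_0\neq0$, so that no Adams--Moser--Trudinger or Brezis--Merle concentration argument is needed. First I would split off a smooth part exactly as in the proof of Lemma \ref{lexm2}: since $v\in L^1_{loc}(\mathbb{R}^n)$ and $(-\Delta)^{n/2}v=-e^{nu}$ on all of $\mathbb{R}^n$ in the distributional sense, the function $\eta:=u-v$ solves $(-\Delta)^{n/2}\eta=0$ on $\mathbb{R}^n\backslash\{0\}$; for $n$ even this is a polyharmonic equation, hence $\eta\in C^\infty(\mathbb{R}^n\backslash\{0\})$ by hypoellipticity of $\Delta^{n/2}$, while for $n$ odd one applies $(-\Delta)^{1/2}$ to obtain $(-\Delta)^{(n+1)/2}\eta=0$, again polyharmonic with integer exponent, so once more $\eta\in C^\infty(\mathbb{R}^n\backslash\{0\})$. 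It therefore suffices to prove $v\in C^\infty(\mathbb{R}^n\backslash\{0\})$.

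The key elementary step is the local bound $e^{nu}\in L^\infty_{loc}(\mathbb{R}^n\backslash\{0\})$. Fix $x_0\neq0$ and $0<r<|x_0|/2$, and split the integral defining $v$ into the part over $B_r(x_0)$ and the part over its complement. For $x\in B_{r/2}(x_0)$ and $y\notin B_r(x_0)$ the kernel $\ln\frac{1+|y|}{|x-y|}$ is smooth in $x$, uniformly bounded, and tends to $0$ as $|y|\to\infty$; since $e^{nu}\in L^1(\mathbb{R}^n)$, differentiation under the integral is legitimate and this part defines a smooth bounded function on $B_{r/2}(x_0)$. For $y\in B_r(x_0)$ one has $-\ln\frac{1+|y|}{|x-y|}\le\ln|x-y|\le\ln(2r)\le C$, so the $B_r(x_0)$-contribution to $-\frac{1}{\gamma_n}\int\ln\frac{1+|y|}{|x-y|}e^{nu}dy$ is bounded above by $\frac{C}{\gamma_n}\int_{B_r(x_0)}e^{nu}\le\frac{C\Lambda}{\gamma_n}$. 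Hence $v$, and with it $u=v+\eta$, is bounded above on $B_{r/2}(x_0)$, so $e^{nu}\in L^\infty(B_{r/2}(x_0))$; as $x_0$ was arbitrary, $e^{nu}\in L^\infty_{loc}(\mathbb{R}^n\backslash\{0\})\subset L^p_{loc}(\mathbb{R}^n\backslash\{0\})$ for every finite $p$.

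It then remains to bootstrap. From $e^{nu}\in L^p_{loc}$ for all $p$, standard estimates for the logarithmic potential (differentiating the $B_r(x_0)$-part of $v$ in $x$ up to order $n-1$, whose kernels $D^k_x\ln\frac{1}{|x-y|}$ are locally integrable for $|k|\le n-1$) give $v\in C^{n-1,\alpha}_{loc}(\mathbb{R}^n\backslash\{0\})$ for every $\alpha\in(0,1)$, hence $u\in C^{n-1,\alpha}_{loc}$ and $-e^{nu}\in C^{n-1,\alpha}_{loc}$. Feeding this into interior regularity for $(-\Delta)^{n/2}$ --- the usual Schauder theory for $\Delta^{n/2}$ when $n$ is even, and the factorization $(-\Delta)^{n/2}=(-\Delta)^{(n-1)/2}\circ(-\Delta)^{1/2}$ together with the local Schauder estimates for the fractional Laplacian from \cite[Lemmas 2.5 and 2.6]{AH} (see also \cite[Theorem 2.1]{hmm}) when $n$ is odd, which apply because $u$ lies in the admissible weak-solution class ($u\in L_{n/2}(\mathbb{R}^n)$, resp.\ $\Delta^{(n-1)/2}u\in L_{1/2}(\mathbb{R}^n)$) --- raises the Hölder regularity of $u$ by $n$ derivatives at each step, and iterating yields $u\in C^\infty(\mathbb{R}^n\backslash\{0\})$. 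I expect the only genuinely delicate point to be this last step in odd dimensions: because $(-\Delta)^{n/2}$ is nonlocal, the interior gain of regularity must be run inside the weak-solution class so that the tail of $u$ at infinity does not interfere, which is precisely what the cited regularity lemmas are designed to handle; the first two steps are straightforward consequences of the finite-volume hypothesis and of the favorable sign of the right-hand side.
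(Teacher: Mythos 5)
Your proof is correct, but it takes a genuinely different route from the paper's. The paper does \emph{not} exploit the sign of the nonlinearity as you do: it runs the standard Brezis--Merle / Jensen argument, splitting $-e^{nu}=f_1+f_2$ with $f_1\in L^1\cap L^\infty$ and $\|f_2\|_{L^1}<\gamma_n/n$, writing $u=u_1+u_2+u_3$ for the corresponding potentials plus a polyharmonic remainder, and then bounding $\int_{B_R}e^{nq u_2}\,dx$ via Jensen's inequality thanks to the smallness of $\|f_2\|_{L^1}$; this yields $e^{nu}\in L^q_{loc}$ for $1<q\leq n$, from which the bootstrap proceeds as in your proposal. Your observation that the negative sign renders this entire machinery unnecessary---because the singular part of the kernel contributes a term bounded \emph{above} to $v$, hence $e^{nu}\in L^\infty_{loc}$ away from the origin directly from $\int e^{nu}<\infty$---is a genuine simplification that is special to the negative-curvature setting, whereas the paper's argument is sign-agnostic and would equally work if the right-hand side were $+e^{nu}$. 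One small caveat, which applies to the paper's Lemma \ref{lexm2} as well: the passage from $(-\Delta)^{n/2}\eta=0$ on $\mathbb{R}^n\backslash\{0\}$ to $(-\Delta)^{(n+1)/2}\eta=0$ on $\mathbb{R}^n\backslash\{0\}$ when $n$ is odd is not a purely local deduction (the half-Laplacian is nonlocal and does not map $C_c^\infty(\mathbb{R}^n\backslash\{0\})$ into itself), so strictly speaking this step needs the same justification as in the paper; you could sidestep it entirely by observing that, once $e^{nu}\in L^\infty_{loc}(\mathbb{R}^n\backslash\{0\})$ is established, the smoothness of $\eta$ is not needed separately since one can bootstrap directly on $u$ via the regularity lemmas you already cite.
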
		
\proof  We split $-e^{nu}=f_1+f_2$, where $f_1,f_2\leq 0$,  $f_1\in L^1(\mathbb{R}^n)\cap L^{\infty}(\mathbb{R}^n)$ and $\|f_2\|_{L^1(\mathbb{R}^n)}<\frac{\gamma_{n}}{n}$. Denote
\begin{equation}\label{vn4}
	u_i:=\frac{1}{\gamma_{n}}\int_{\mathbb{R}^{n}}\ln\left(\frac{1+|y|}{|x-y|}\right)
	f_i(y)dy,\quad i=1,2,
\end{equation}
and $u_3:=u-u_1-u_2.$ Applying the regularity theorey  to $	(-\Delta)^{n/2}u_1=f_1$, we know  $u_1\in C^{n-1}(\mathbb{R}^n)$. Since $(-\Delta)^{n/2}u_3=0$ on $\mathbb{R}^n$,  one gets $u_3\in C^{\infty}(\mathbb{R}^n)$. On the other hand, for any $1<q\leq n$, it follows from the Jensen inequality and (\ref{vn4}) that
\begin{equation}\label{vn5}
	\begin{split}
		\int_{B_R(0)}e^{nqu_2}dx&= \int_{B_R(0)}\exp\left[\frac{nq}{\gamma_{n}}\int_{\mathbb{R}^{n}}\ln\left(\frac{1+|y|}{|x-y|}\right)f_2(y)dy\right]dx\\
		&\leq\int_{B_R(0)}\exp\left[\frac{nq\|f_2\|_{L^1(\mathbb{R}^n)}}{\gamma_{n}}\int_{\mathbb{R}^{n}}\ln\left(\frac{1}{|x-y|}\right)\frac{f_2(y)}{\|f_2\|_{L^1(\mathbb{R}^n)}}dy+C\right]dx\\
		&\leq C\int_{B_R(0)} \int_{\mathbb{R}^{n}}\frac{f_2(y)}{\|f_2\|_{L^1(\mathbb{R}^n)}}\exp\left[\frac{nq\|f_2\|_{L^1(\mathbb{R}^n)}}{\gamma_{n}}\ln\left(\frac{1}{|x-y|}\right)\right]dydx\\
		&\leq  C \int_{\mathbb{R}^{n}} \frac{f_2(y)}{\|f_2\|_{L^1(\mathbb{R}^n)}} \int_{B_R(0)}	\left(\frac{1}{|x-y|}\right)^{\frac{nq\|f_2\|_{L^1(\mathbb{R}^n)}}{\gamma_{n}}}dxdy,
	\end{split}
\end{equation}
where $C$ is a constant. Owing to $\|f_2\|_{L^1(\mathbb{R}^n)}<\frac{\gamma_{n}}{n}$, we get
\begin{equation}\label{vn6}
	\begin{split}
		\int_{B_R(0)}	\left(\frac{1}{|x-y|}\right)^{\frac{nq\|f_2\|_{L^1(\mathbb{R}^n)}}{\gamma_{n}}}dx \leq \int_{B_R(0)}\frac{1}{|x|^n}dx\leq C
	\end{split}
\end{equation}
for a constant $C$ depending on $R$. Putting (\ref{vn5}) and (\ref{vn6}) together, we have
\begin{equation*}
	\begin{split}
		\int_{B_R(0)}e^{nqu_2}dx\leq  \int_{\mathbb{R}^{n}} \frac{f_2(y)}{\|f_2\|_{L^1(\mathbb{R}^n)}} dy \leq C.
	\end{split}
\end{equation*}
This implies that  $e^{nu}\in L^q_{loc}(\mathbb{R}^n)$ for any $1<q\leq n$. Using elliptic regularity of  \cite{gt} and Schauder's estimates  to (\ref{az2}), we conclude $u\in C^{\infty}(\mathbb{R}^n\backslash\{0\})$. $\hfill\Box$\\	
	
A generalization of \cite[Lemma 15]{TJ} reads
\begin{lemma}\label{nn3}
If $w\in L_{\frac{1}{2}}(\mathbb{R}^n)$ satifies $(-\Delta)^{1/2}w=0$ on $\mathbb{R}^n$, then $w$ is a constant.
\end{lemma}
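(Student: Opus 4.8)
\textbf{Proof proposal for Lemma \ref{nn3}.}

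The plan is to reduce the statement to the classical Liouville theorem for harmonic functions by exploiting the semigroup/extension structure of the half-Laplacian. First I would recall that a tempered distribution $w$ with $(-\Delta)^{1/2}w=0$ on $\mathbb{R}^n$ has Fourier transform supported at the origin, hence $w$ is a polynomial; the growth condition $w\in L_{\frac12}(\mathbb{R}^n)$, i.e. $\int_{\mathbb{R}^n}\frac{|w(x)|}{1+|x|^{n+1}}\,dx<\infty$, then forces $\deg w\le 0$, so $w$ is constant. The only subtlety is that $w$ is a priori only assumed to lie in $L_{\frac12}(\mathbb{R}^n)$ rather than in $\mathcal{S}'(\mathbb{R}^n)$, so one must first check that $w$ defines a tempered distribution, which is immediate since $\frac{1}{1+|x|^{n+1}}$ decays fast enough that $L_{\frac12}(\mathbb{R}^n)\subset \mathcal{S}'(\mathbb{R}^n)$, and that the equation $(-\Delta)^{1/2}w=0$ is understood in the weak sense $\int_{\mathbb{R}^n} w\,(-\Delta)^{1/2}\psi\,dx=0$ for all $\psi\in\mathcal{S}(\mathbb{R}^n)$ with the integral converging by the $L_{\frac12}$ bound (cf. \cite[Proposition 2.1]{AH}).

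The argument I would actually write out proceeds via the Caffarelli--Silvestre extension. Extend $w$ to the upper half-space $\mathbb{R}^{n+1}_+$ by the Poisson kernel $P(x,t)=c_n \frac{t}{(|x|^2+t^2)^{(n+1)/2}}$, setting $W(x,t)=\int_{\mathbb{R}^n}P(x-y,t)w(y)\,dy$; the integral is absolutely convergent precisely because $w\in L_{\frac12}(\mathbb{R}^n)$ and $P(\cdot,t)$ decays like $|y|^{-(n+1)}$. Then $W$ is harmonic in $\mathbb{R}^{n+1}_+$, $W(\cdot,t)\to w$ as $t\to 0^+$, and the Neumann-type boundary condition $-\lim_{t\to 0^+}\partial_t W(x,t)=c_n^{-1}(-\Delta)^{1/2}w(x)=0$ holds. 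Reflecting evenly across $\{t=0\}$ gives a harmonic function on all of $\mathbb{R}^{n+1}$ (the vanishing normal derivative makes the reflection harmonic across the hyperplane), and a direct estimate from $w\in L_{\frac12}$ shows $|W(x,t)|=o(|(x,t)|)$, in fact $|W(x,t)|\le C(1+|(x,t)|^{\varepsilon})$ for small $\varepsilon$, as $|(x,t)|\to\infty$. By the Liouville theorem for harmonic functions with sublinear growth, $W$ is constant, hence $w=W(\cdot,0)$ is constant.

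The main obstacle is the growth bookkeeping: one must verify carefully that the $L_{\frac12}$-condition translates into the sublinear (or at least subpolynomial-of-degree-$1$) growth of the harmonic extension $W$ needed to apply Liouville, since $L_{\frac12}(\mathbb{R}^n)$ permits $w$ to grow slower than $|x|$ on average but does not a priori give a pointwise bound. This is handled by splitting the Poisson integral over $\{|y|\le 2|(x,t)|\}$ and its complement and using the convolution bound together with the fact that a polynomial in $L_{\frac12}$ must have degree zero. An alternative, perhaps cleaner, route is to bypass the extension entirely: since $w$ is tempered and annihilated by $|\xi|$ (as a multiplier away from $0$), $\widehat w$ is supported at $\{0\}$, so $w$ is a polynomial, and then $\int \frac{|w|}{1+|x|^{n+1}}\,dx<\infty$ forces $\deg w=0$. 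I would present the Fourier-analytic argument as the primary proof, with the proviso that one must justify that testing against Schwartz functions whose transform vanishes near $0$ determines $w$ up to a polynomial — this is where the hypothesis $w\in L_{\frac12}$, rather than merely $w\in\mathcal{S}'$, is used to pin down the degree.
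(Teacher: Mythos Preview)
Your proposal is correct, and in fact supplies considerably more than the paper does: the paper gives no argument at all for this lemma, merely stating that the proof is similar to \cite[Lemma~15]{TJ} (the case $n=3$) and omitting the details. So there is no ``paper's own proof'' to compare against beyond that reference.

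Both routes you outline are valid. The Fourier-analytic one is the cleanest and is the one I would keep as the main argument: $w\in L_{1/2}(\mathbb{R}^n)\subset\mathcal{S}'(\mathbb{R}^n)$; testing the weak equation against $\psi=F^{-1}(\phi/|\xi|)$ for $\phi\in C_c^\infty(\mathbb{R}^n\setminus\{0\})$ shows $\langle\hat w,\phi\rangle=0$, hence $\operatorname{supp}\hat w\subset\{0\}$ and $w$ is a polynomial; finally a polynomial of degree $\geq 1$ fails $\int\frac{|w|}{1+|x|^{n+1}}\,dx<\infty$, so $w$ is constant.

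One small correction to your extension sketch: the growth bound you state, $|W(x,t)|\leq C(1+|(x,t)|^\varepsilon)$ for small $\varepsilon$, is too strong and not generally true (e.g.\ $w(y)=|y|^{1/2}\in L_{1/2}$ gives $W\sim|(x,t)|^{1/2}$). What one does get, by the crude estimate $(1+|y|^{n+1})\lesssim R^{n+1}$ on $\{|y|\le 2R\}$ together with the tail bound on $\{|y|>2R\}$, is $|W(x,t)|\leq C|(x,t)|$ for large $|(x,t)|$. Liouville then yields that the even reflection $\tilde W$ is affine on $\mathbb{R}^{n+1}$; evenness in $t$ kills the $t$-coefficient, so $w(x)=\tilde W(x,0)=a\cdot x+c$, and the $L_{1/2}$ condition forces $a=0$. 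So the extension route closes, just with a weaker growth input than you wrote.
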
		
\proof Since the proof is similar to \cite[Lemma 15]{TJ},  we omit the details here but refer the reader to \cite{TJ} for the case $n=3$.  $\hfill\Box$\\
	
The following lemma comes from \cite[Theorem 5]{lma}.
\begin{lemma}\label{lem6}
If $h$ satisfies $(-\Delta)^{m}h=0$ on $\mathbb{R}^n$ and $h(x)\leq C(1+|x|^{\ell})$ foe some $\ell \geq 2m-2 $. Then $h(x)$	is a polynomial of degree at most $\ell$.
\end{lemma}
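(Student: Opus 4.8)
The plan is to establish the stronger pointwise statement that $D^\alpha h\equiv 0$ for every multi-index $\alpha$ with $|\alpha|=\lfloor\ell\rfloor+1$; since $\lfloor\ell\rfloor+1>\ell\geq 2m-2$, this forces $h$ to be a polynomial of degree at most $\lfloor\ell\rfloor\leq\ell$. As preliminary facts I would use: (a) that $h\in C^\infty(\mathbb{R}^n)$, by interior elliptic regularity for the constant-coefficient elliptic operator $(-\Delta)^m$; (b) Pizzetti's mean value identity for polyharmonic functions, namely that for every $x_0\in\mathbb{R}^n$ and $r>0$
\[
\fint_{B_r(x_0)}h(y)\,dy=\sum_{k=0}^{m-1}c_k\,r^{2k}\,\Delta^k h(x_0),\qquad c_0=1,
\]
with dimensional constants $c_k$; and (c) the interior derivative estimate $|D^\alpha h(x_0)|\leq C_{\alpha,n}\,r^{-n-|\alpha|}\,\|h\|_{L^1(B_{2r}(x_0))}$, valid whenever $(-\Delta)^m h=0$ in $B_{2r}(x_0)$, which follows from interior $L^p$ and Schauder estimates for $(-\Delta)^m$ together with the natural rescaling.

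The decisive step — and the only place where the one-sidedness of the hypothesis genuinely matters — is to upgrade the upper bound $h\leq C(1+|x|^\ell)$ to a two-sided bound on the $L^1$-averages of $h$. From (b), for each fixed $x_0$ the map $r\mapsto\fint_{B_r(x_0)}h$ is a polynomial in $r$ of degree at most $2m-2\leq\ell$, so $\big|\fint_{B_r(x_0)}h\big|\leq C(x_0)(1+r^\ell)$ for $r\geq 1$. On the other hand, since $h(y)\leq C(1+(|x_0|+r)^\ell)$ on $B_r(x_0)$, we get $\fint_{B_r(x_0)}h^+\leq C(1+(|x_0|+r)^\ell)$, and hence
\[
\fint_{B_r(x_0)}|h|=2\fint_{B_r(x_0)}h^+-\fint_{B_r(x_0)}h\leq C'(x_0)(1+r^\ell)\qquad\text{for all }r\geq 1,
\]
with a constant independent of $r$. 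This is precisely the estimate that a bare one-sided bound cannot deliver and that the constraint $\ell\geq 2m-2$ makes available through Pizzetti's formula.

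Finally I would combine this average bound with the interior estimate (c), applied at radius $2r$, to obtain $|D^\alpha h(x_0)|\leq C''(x_0)\,r^{-|\alpha|}(1+r^\ell)$ for all $r\geq 1$; choosing $|\alpha|=\lfloor\ell\rfloor+1$ and letting $r\to\infty$ gives $D^\alpha h(x_0)=0$. As $x_0$ is arbitrary, every partial derivative of $h$ of order $\lfloor\ell\rfloor+1$ vanishes identically, so $h$ is a polynomial of degree at most $\lfloor\ell\rfloor\leq\ell$. I expect the main obstacle to be exactly the middle step — equivalently, producing a lower bound for the solid-ball average $\fint_{B_r(x_0)}h$, without which one is stuck with purely one-sided control — whereas the regularity input (a) and the interior estimate (c) are classical; note that the role of the assumption $\ell\geq 2m-2$ is precisely to keep the Pizzetti polynomial of admissible growth. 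An alternative route would be an induction on $m$ reducing to the harmonic Liouville theorem with one-sided polynomial growth, but the argument above handles all orders $m$ simultaneously and seems cleaner.
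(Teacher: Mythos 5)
Your proof is correct and follows essentially the same Pizzetti-based argument as the one given for \cite[Theorem 5]{lma}, which the paper cites for this lemma without reproducing a proof. In both cases, the mean-value expansion converts the one-sided bound on $h$ into a two-sided bound on ball averages of $|h|$ (with the hypothesis $\ell \geq 2m-2$ ensuring the Pizzetti polynomial in $r$ stays within the allowed growth), and the conclusion then follows from rescaled interior derivative estimates for $(-\Delta)^m$.
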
	 \vspace{0.3cm}

We shall  give a short introduction about the Paneitz operator and define $H^{\frac{n}{2}}(\mathbb{S}^n)$. The reader can consult  \cite{TB2,A-C2,cy,gjms}  for deeper explanations about them. \vspace{0.3cm} 

\noindent \textbf{Definition 6.4}\, Let $n\geq 3$ and $g_0$ be the round metric on the sphere $\mathbb{S}^n$.  Denote a orthonormal basis of $L^2(\mathbb{S}^n)$
$$\{Y^m_{\ell}\in C^{\infty}(\mathbb{S}^n):\, 1\leq m\leq N_{\ell},\, \ell=0,1,2,\cdots\},$$
where $Y^m_{\ell}$ is an eigenfunction of the Laplace–Beltrami operator $-\Delta_{g_0}$ corresponding to the eigenvalue $\lambda_{\ell}=\ell(\ell+n-1)$, and $N_{\ell}$ is the multiplicity of $\lambda_{\ell}$ (see \cite{st} for details). Given $u\in L^2(\mathbb{S}^n)$, we can write 
\begin{equation}\label{vnx6}
	u=\sum_{\ell=0}^{\infty}\sum_{m=1}^{N_{\ell}}u^m_{\ell}Y^m_{\ell},\quad \quad u^m_{\ell}\in\mathbb{R}
\end{equation}
and   $\|u\|_{L^2(\mathbb{S}^n)}^2=\sum_{\ell=0}^{\infty}\sum_{m=1}^{N_{\ell}}(u^m_{\ell})^2$. 
For  $u\in L^2(\mathbb{S}^n)$ with spherical harmonics expansion as in (\ref{vnx6}), we set
\begin{equation*}\label{af2c8d}
	\begin{split}
		H^{n}(\mathbb{S}^n)=\left\{u\in L^{2}(\mathbb{S}^n): \; \|u\|_{\dot{H}^n(\mathbb{S}^n)}<\infty\right\},
	\end{split}
\end{equation*}	
where $\|u\|_{\dot{H}^n(\mathbb{S}^n)}^2=\|P_{g_0}^nu\|_{L^2(\mathbb{S}^n)}^2$ and   the Paneitz operator on $H^{n}(\mathbb{S}^n)$  is defined by 
\begin{equation*}
	\label{be1}P_{g_0}^nu=\le\{\begin{array}{lll}
\sum_{\ell=0}^{\infty}\sum_{m=1}^{N_{\ell}}\prod^{\frac{n-2}{2}}_{k=0}(\lambda_{\ell}+k(n-k-1))
u^m_{\ell}Y^m_{\ell} \quad \quad \quad \quad \quad \quad  \;\; \; \text{for} \; \,n\; \, \text{even},\\[1.5ex]
\sum_{\ell=0}^{\infty}\sum_{m=1}^{N_{\ell}}
\left(\lambda_{\ell}+\left(\frac{n-1}{2}\right)^2\right)^{\frac{1}{2}}\prod^{\frac{n-3}{2}}_{k=0}(\lambda_{\ell}+k(n-k-1))
u^m_{\ell}Y^m_{\ell} \quad \text{for} \; \,n\; \, \text{odd.}
\end{array}\ri.
\end{equation*}
Note that $P_{g_0}^n$ is positive and $P_{g_0}^nu\in L^2(\mathbb{S}^n)$. Thus  its square root can be defined as 

\begin{equation*}
	\label{be2}(P_{g_0}^n)^{\frac{1}{2}}u=\le\{\begin{array}{lll}
	\sum_{\ell=0}^{\infty}\sum_{m=1}^{N_{\ell}}	\prod^{\frac{n-2}{2}}_{k=0}(\lambda_{\ell}+k(n-k-1))^{\frac{1}{2}}
		u^m_{\ell}Y^m_{\ell} \quad \quad \quad \quad \quad \;\;  \quad \; \text{for} \; \,n\; \, \text{even},\\[1.5ex]
		\sum_{\ell=0}^{\infty}\sum_{m=1}^{N_{\ell}}
		\left(\lambda_{\ell}+\left(\frac{n-1}{2}\right)^2\right)^{\frac{1}{4}}\prod^{\frac{n-3}{2}}_{k=0}(\lambda_{\ell}+k(n-k-1))^{\frac{1}{2}}
		u^m_{\ell}Y^m_{\ell} \quad \text{for} \; \,n\; \, \text{odd.}
	\end{array}\ri.
\end{equation*}
This operator is naturally well defined on the Sobolev space 
  for $n$ even,
\begin{equation*}\label{af2c9}
	\begin{split}
		H^{\frac{n}{2}}(\mathbb{S}^n)=\left\{u\in L^{2}(\mathbb{S}^n): \; \sum_{\ell=0}^{\infty}\sum_{m=1}^{N_{\ell}}\prod^{\frac{n-2}{2}}_{k=0}(\lambda_{\ell}+k(n-k-1))
		(u^m_{\ell})^2<\infty\right\}
	\end{split}
\end{equation*}	
and for $n$ odd
\begin{equation*}\label{af2c8}
	\begin{split}
		H^{\frac{n}{2}}(\mathbb{S}^n)=\left\{u\in L^{2}(\mathbb{S}^n): \; \sum_{\ell=0}^{\infty}\sum_{m=1}^{N_{\ell}}
		\left(\lambda_{\ell}+\left(\frac{n-1}{2}\right)^2\right)^{\frac{1}{2}}\prod^{\frac{n-3}{2}}_{k=0}(\lambda_{\ell}+k(n-k-1))
		(u^m_{\ell})^2<\infty\right\}
	\end{split}
\end{equation*}	
  endowed with the norm $\|u\|_{H^{\frac{n}{2}}(\mathbb{S}^n)}^2:=\|u\|_{L^{2}(\mathbb{S}^n)}^2+\|(P_{g_0}^n)
^{\frac{1}{2}}u \|_{L^{2}(\mathbb{S}^n)}^2.$

\vspace{1.0cm}

\noindent \textbf{Acknowledgements.} Tobias König acknowledges partial support  through ANR BLADE-JC ANR-18-CE40-002. Yamin Wang is supported by China Scholarship Council in 2021  and the Outstanding Innovative Talents Cultivation Funded Programs 2021 of Renmin University of China. She is very grateful to  Prof. Luca Martinazzi for many stimulating conversations during her visit to Università  di Roma, La Sapienza. She would also like to thank Prof. Ali Hyder for helpful discussions on this topic.

	\bigskip


\begin{thebibliography}{00}
		\bibitem{TB2}
		T. Branson, The functional determinant. Global Analysis Research Center Lecture Notes Series, no. 4. Seoul National University (1993).
		
		\bibitem{TB} T. Branson, Sharp inequality, the functional determinant and the complementary series, Trans. Amer. Math. Soc. 347 (1995), 3671-3742.
		\bibitem{xi1}	
		L. Caffarelli, B. Gidas, J. Spruck, Asymptotic symmetry and local behavior of semilinear elliptic equations with critical Sobolev growth, Commun. Pure Appl. Math. 42 (1989), 271-297.
		
		\bibitem{ru2}
		A. Carlotto, A. Malchiodi, Weighted barycentric sets and singular Liouville equations on compact surfaces. J. Funct. Anal. 262 (2012), 409-450.
		
		\bibitem{y1} 
		S.-Y. A. Chang, Non-linear Elliptic Equations in Conformal Geometry, Zurich Lecture Notes
		in Advanced Mathematics, 2004.
		
		
		\bibitem{A-C} 	S.-Y. A. Chang, W. Chen, A note on a class of higher order conformally convariant equations, Discrete Contin. Dyn. Syst. 63 (2001) 275-281.
		
		\bibitem{A-C2}  S.-Y.  A. Chang, J.  Qing, The Zeta functional determinants on manifolds with boundary. I. The formula. J. Funct. Anal. 147 (1997) 327–362. 
		
		\bibitem{cy} S.-Y. A. Chang, P. Yang, On uniqueness of solutions of n-th order differential equations in conformal geometry, Math. Research Letters, 4 (1997), 91-102.
		
		
		\bibitem{chan} S. Chanillo, M. Kiessling, Rotational symmetry of solutions of some nonlinear problems in statistical mechanics and geometry,  Commun. Math. Phys. 160 (1994), 217–38.
		
		
\bibitem{cl} W. Chen, C. Li, Classification of solutions of some nonlinear elliptic equations, Duke Math. J. 63 (1991), 615-622.
		
\bibitem{dm} F. Da Lio, L. Martinazzi, The nonlocal Liouville-type equation in $\mathbb{R}$ and conformal immersions of the disk with boundary singularities. Calc. Var. Partial Differ. Equ. 56 (2017), no. 152.

\bibitem{dmc} 
F. Da Lio, L. Martinazzi, T. Rivi$\acute{\text{e}}$re, Blow-up Analysis of a nonlocal Liouvilletype
equation, Anal. PDE 8 (2015),  1757-1805
		
		
		\bibitem{nezza}
		E. Di Nezza, G. Palatucci, E. Valdinoci, Hitchhiker’s guide to the fractional Sobolev spaces, Bull. Sci. Math. 136 (2012), 521–573.
		
		\bibitem{fma}	
		H. Fang, B. Ma, Constant Q-curvature metrics on conic 4-manifolds,  Adv. Calc. Var. 15 (2022),  235-264. 
		
			\bibitem{fe}
		C. Fefferman, C. Graham, Q-curvature and Poincaré metrics, Math. Res. Lett. 9 (2002), 139–151.
	
		\bibitem{xi3} 
		R. Frank, T. König, Classification of positive solutions to a nonlinear biharmonic equation with critical exponent, Anal. PDE 12 (2019), 1101-1113.
		
		\bibitem{bocher}
		T. Futamura, K. Kishi, Y. Mizuta, A generalization of Bôcher's theorem for polyharmonic
		functions, Hiroshima Math. J. 31 (2001),  59-70.
		
		\bibitem{gt}
		D. Gilbarg, N. Trudinger, Elliptic partial differential equations of second order, revised 2nd
		ed., Grundlehren der Mathematischen Wissenschaften 224, Springer, Berlin.
		
	\bibitem{gorin}	
		E. A. Gorin, Asymptotic properties of polynomials and algrebraic functions of several variables. Russ. Math. Surv. 16 (1961), 93–119.
		
		\bibitem{gjms}
		C.  Graham, R. Jenne, L. Mason, G. Sparling,  Conformally invariant powers of the Laplacian, I: existence, J. London Math. Soc. 46  (1992), 557-565.
		
		\bibitem{cy2}
		C. Graham, M.  Zworski,  Scattering matrix in conformal geometry. Invent. Math. 152  (2003), 89–118.
		
		\bibitem{hye}
		X. Huang,  D. Ye, Conformal metrics in $\mathbb{R}^{2m}$ with constant Q-curvature and arbitrary volume, Calc. Var. Partial Differential Equations 54 (2015), 3373–3384.
		
		
		\bibitem{AH} A. Hyder, Existence of entire solutions to a fractional Liouville equation in $\mathbb{R}^n$, Atti Accad. Naz. Lincei Rend. Lincei Mat. Appl. 27 (2016), 1-14.
		
		\bibitem{HH}
		A. Hyder, Conformally Euclidean metrics on $\mathbb{R}^n$ with arbitrary total Q-curvature. Anal. PDE 10 (2017),  635-652.
		
		\bibitem{AHY}
		A. Hyder, Structure of conformal metrics on $\mathbb{R}^n$  with constant Q-curvature, Differential Integral Equations 32 (2019), 423-454.
		
		\bibitem{hmm}
		A. Hyder, G. Mancini, L. Martinazzi, Local and nonlocal singular Liouville equations in Euclidean
		spaces, IMRN  15 (2021), 11393-11425.
		
		\bibitem{y4}
		A. Hyder, L. Martinazzi, Conformal metrics on $\mathbb{R}^{2m}$ with constant Q-curvature, prescribed volume and asymptotic behavior, Discrete Contin. Dyn. Syst. 35 (2015), 283-299.
		
		\bibitem{hmjde}
		A. Hyder, L. Martinazzi, Normal conformal metrics on $\mathbb{R}^{4}$ with Q-curvature having power-like growth, J. Differential Equations, 301 (2021), 37-72.
		
		\bibitem{TJ} T. Jin, A. Maalaoui, L. Martinazzi,  J. Xiong, Existence and asymptotics for solutions of a non-local Q-curvature equation in dimension three, Calc. Var. Partial Differ. Equ. 52 (2015), 469–88. 
		
		\bibitem{xi2} T. Jin,  J. Xiong, Asymptotic symmetry and local behavior of solutions of higher order conformally invariant equations with isolated singularities, Ann. Inst. H. Poincaré Anal. Non Linéaire 38 (2021), 1167-1216. 
		
		\bibitem{jost1}	J. Jost, G. Wang, C. Zhou, Metrics of constant curvature on a Riemann surface with two corners on the boundary, Ann. Inst. H. Poincaré 26 (2009), 437-456.
		
		\bibitem{T-L} T. König, P. Laurain, Constant Q-curvature metrics with a sigularity, Trans. Amer. Math. Soc. 375 (2022), 2915-2948.

		
		\bibitem{cs}
		C.-S. Lin, A classification of solutions of a conformally invariant fourth order equation in $\mathbb{R}^{n}$, Comment. Math. Helv. 73 (1998), 206–231.
		
			\bibitem{ru1}
		A. Malchiodi, D. Ruiz, New improved Moser-Trudinger inequalities and singular Liouville equations on compact surfaces. Geom. Funct. Anal. 21 (2011), 1196-1217.
		
		\bibitem{xi4} 
		F. Marques, Isolated singularities of solutions to the Yamabe equation, Calc. Var. Partial Differ. Equ. 32  (2008) 349-371.
		
		
		\bibitem{lm2}
		L. Martinazzi,  Conformal metrics on $\mathbb{R}^{2m}$ with constant Q-curvature, Rend. Lincei Mat. Appl. 19 (2008), 279–292. 
		
		
		\bibitem{lma}
		L. Martinazzi, Classification of solutions to the higher order Liouville’s equation on $\mathbb{R}^{2m}$, Math. Z. 263 (2009), 307–329.
		
		\bibitem{lm}
		L. Martinazzi, Conformal metrics on $\mathbb{R}^{2m}$ with constant Q-curvature and large volume, Ann. Inst. H. Poincaré Anal. Non Linéaire 30 (2013), 969–982.
		
		
		
		
		\bibitem{pt}
		J. Prajapat, G. Tarantello, On a class of elliptic problems in $\mathbb{R}^{2}$: symmetry and uniqueness results. Proc. Roy. Soc. Edinburgh Sect. A 131 (2001), 967-985.
		
		
	\bibitem{sl}
		L. Silvestre,  Regularity of the obstacle problem for a fractional power of the Laplace operator. Commun. Pure Appl. Math. 60 (2007), 67–112. 
		
		\bibitem{st} E.  Stein, Singular integrals and differentiable properties of functions, Princeton Math. Ser. 30 Princeton University Press, Princeton, NJ, 1970.
		
		
		\bibitem{mt}
		 M. Troyanov, Prescribing curvature on compact surfaces with conical singularities. Trans. Amer. Math. Soc. 324 (1991), 793–821.
		
		\bibitem{r2}
		J. Wei, X. Xu, Classification of solutions of higher order conformally invariant equations,
		Math. Ann. 313 (1999), 207–228. 
		
		\bibitem{e2}
		J. Wei, D. Ye, Nonradial solutions for a conformally invariant fourth order equation in $\mathbb{R}^4$,  Calc. Var. Partial Differ. Equ. 32 (2008), 373-386.
		
		\bibitem{xx}
		X. Xu, Uniqueness and non-existence theorems for conformally invariant equations, J. Funct. Anal. 222 (2005), 1–28.
		
		\bibitem{r3}X.  Xu, Uniqueness theorems for integral equations and its application, J. Funct. Anal. 247 (2007), 95–109. 
		
	\end{thebibliography}
\end{document}